\newtheorem{theorem}{Theorem}[section]
\newtheorem{lemma}[theorem]{Lemma}
\newtheorem{proposition}[theorem]{Proposition}
\theoremstyle{definition}
\newtheorem{definition}[theorem]{Definition}
\theoremstyle{remark}
\newtheorem{remark}[theorem]{Remark}
\numberwithin{equation}{section}
\begin{document}

\title{Transitive Courant algebroids, string structures and T-duality}
\author{David Baraglia}
\author{Pedram Hekmati}

\address{School of Mathematical Sciences, The University of Adelaide, Adelaide SA 5005, Australia}

\email{david.baraglia@adelaide.edu.au}
\email{pedram.hekmati@adelaide.edu.au}

\begin{abstract}In this paper, we use reduction by extended actions to give a construction of transitive Courant algebroids from string classes. We prove that T-duality commutes with the reductions and thereby determine global conditions for the existence of T-duals in heterotic string theory. In particular we find that  T-duality exchanges string structures and gives an isomorphism of transitive Courant algebroids. Consequently we derive the T-duality transformation for generalised metrics and show that the heterotic Einstein equations are preserved. The presence of string structures significantly extends the domain of applicability of T-duality and this is illustrated by several classes of examples.
\end{abstract}
\thanks{This work is supported by the Australian Research Council Discovery Project DP110103745 and DE12010265.}

\subjclass[2010]{53C08, 53D18, 81T30, 83E50}




\maketitle


\section{Introduction}

The bosonic fields in the low energy limit of type II string theories consist of a metric $g$, closed $3$-form $H$ and dilaton function $\varphi$ satisfying a modification of the Einstein equations. A surprising feature of these equations is that they possess a symmetry not found in the ordinary Einstein equations, namely T-duality. The presence of this symmetry reflects the string-theoretic origins of these equations. T-duality relates spaces $X$, $\hat{X}$ which are torus bundles over a common base space and is characterised by an interchange between Chern classes of the torus bundle with topological data associated to the closed $3$-form $H$.\\

It is possible to give a geometric meaning to T-duality using the language of generalised geometry \cite{guacav}. From this point of view T-duality is seen as an isomorphism of Courant algebroids $E,\hat{E}$ associated to the spaces $X,\hat{X}$. Leaving aside the dilaton $\varphi$, the field content $(g,H)$ defines a generalised metric on the Courant algebroid $E$. It then becomes possible to understand the T-duality symmetry of the type II string theory equations as an isomorphism of generalised metrics.\\

The aim of this paper is to address the problem of T-duality in heterotic string theory. The situation differs from the ordinary case in two significant ways. Firstly, heterotic theories require the presence of a gauge bundle with connection. This leads to more complicated equations of motion and modifies the topological conditions for the existence of T-duals as we will see. Secondly, the 3-form $H$ is no longer closed and globally does not correspond to a gerbe. Instead it is subject to the Green-Schwarz anomaly cancellation condition \cite{grsc}
\begin{equation}\label{gresch}
dH = \alpha' \left( Tr( R \wedge R ) - Tr(F \wedge F) \right),
\end{equation}
where $F$ is the curvature of a gauge connection $\nabla$ and $R$ the curvature of an affine connection $\nabla^T$ (often taken to be the Levi-Civita connection). 

To tackle these new challenges, we propose a class of transitive Courant algebroids, which we call {\em heterotic Courant algebroids} (Definition \ref{defhetca}), as the appropriate structure to consider. These are constructed by a reduction procedure and are characterised up to isomorphism by (\ref{gresch}). The latter also clarifies the connection to transitive Courant algebroids because, as established by Bressler \cite{bre}, a solution to (\ref{gresch}) determines a transitive Courant algebroid. From this vantage point we are able to obtain several new insights into heterotic T-duality. In particular, in Section \ref{sectdsc} we determine the topological conditions for a heterotic T-dual to exist and show that for any T-dual, there is an isomorphism of transitive Courant algebroids (Proposition \ref{proptopcond}). Using generalised metrics on transitive Courant algebroids we prove in Section \ref{sechetd} that the heterotic Einstein equations are preserved by T-duality. A further  consequence of (\ref{gresch}) is that it allows for more flexbility in the possible changes in topology and thus gives rise to a host of new examples of T-dual pairs, some of which are documented in Section \ref{secex}. \\

In more detail, to each transitive Courant algebroid $\mathcal{H}$ on $X$ is an associated transitive Lie algebroid $\mathcal{A} = \mathcal{H}/T^*X$. We say that $\mathcal{H}$ is a {\em heterotic Courant algebroid} if $\mathcal{A}$ is the Atiyah algebroid of a principal $G$-bundle $\sigma \colon P \to X$. Throughout we take $G$ to be a compact semisimple Lie group with Lie algebra $\mathfrak{g}$. The Courant algebroid $\mathcal{H}$ has a pairing $\langle \, \, , \, \, \rangle$ which in turn determines a non-degenerate invariant pairing $c = \langle \, \, , \, \, \rangle \in S^2(\mathfrak{g}^*)$ on $\mathfrak{g}$. We show in Proposition \ref{reduct} that every heterotic Courant algebroid is obtained by reduction of an exact Courant algebroid $E$ on $P$. To describe the reduction procedure, recall that exact Courant algebroids are classified by degree $3$ cohomology. Let $H$ be a closed $G$-invariant $3$-form on $P$ representing a class $h = [H] \in H^3(P,\mathbb{R})$ and let $E$ be the corresponding Courant 
algebroid on $P$. To reduce $E$ we require an extended action in the sense of \cite{bcg}. This is an equivariant $\mathfrak{g}$-valued $1$-form $\xi$ satisfying the equation 
\begin{equation}\label{equextended}
d_G( H + \xi ) = c,
\end{equation}
where $d_G$ is the differential for the Cartan complex. In Proposition \ref{extact} we prove that up to equivalence all solutions to (\ref{equextended}) are of the form 
\begin{equation*}
\xi = -\langle A, \, \, \, \rangle, \; \; \; \; H = \sigma^*(H^0) - CS_3(A),
\end{equation*}
where $A$ is a connection on $P$ with curvature $F$, $CS_3(A)$ is the Chern-Simons $3$-form of $A$ and $H^0$ is a $3$-form on $X$ satisfying $dH^0 = \langle F \wedge F \rangle$. In this way we see that extended actions lead naturally to the anomaly cancellation condition (\ref{gresch}). Furthermore it follows that such an extended action exists if and only if the class $h \in H^3(P,\mathbb{R})$ is a {\em string class} in real cohomology, that is the restriction of $h$ to the fibres of $P$ coincides with the Cartan $3$-form $\omega_3 \in H^3(G , \mathbb{R})$ determined by the pairing $\langle \, \, , \, \, \rangle$. This construction can be reversed so that to every string class on $P$ we obtain by reduction a heterotic Courant algebroid on $X$ depending only on the string class.\\

To explain our formulation of heterotic T-duality we first review the Courant algebroid approach to ordinary T-duality \cite{guacav}. Let $T^n,\hat{T}^n$ be dual tori with Lie algebras $\mathfrak{t},\hat{\mathfrak{t}}$. Duality of $T^n,\hat{T}^n$ means there is a natural pairing $\langle \, \, , \, \, \rangle \colon \mathfrak{t} \otimes \hat{\mathfrak{t}} \to \mathbb{R}$. Let $\pi \colon X \to M$ be a principal $T^n$-bundle and $H$ a closed $T^n$-invariant $3$-form on $X$. The cohomology class $[H] \in H^3(M,\mathbb{R})$ determines an exact Courant algebroid $E$ on $X$ and the $T^n$-action lifts to $E$. Similarly let $\hat{\pi} \colon \hat{X} \to M$ be a principal $\hat{T}^n$-bundle, $\hat{H}$ a closed invariant $3$-form and $\hat{E}$ the associated exact Courant algebroid. T-duality of the pairs $(X,H),(\hat{X},\hat{H})$ is captured by the following identity on the fibre product $X \times_M \hat{X}$
\begin{equation}\label{equhhhat1}
\hat{H} - H = d \langle \theta \wedge \hat{\theta} \rangle,
\end{equation}
where $\theta,\hat{\theta}$ are connections on $X,\hat{X}$. This equation captures the exchange of Chern classes of $X,\hat{X}$ and cohomology of $H,\hat{H}$ along the fibres, central to T-duality. Using (\ref{equhhhat1}) one can construct an isomorphism of Courant algebroids $E/T^n \simeq \hat{E}/ \hat{T}^n$. \\

In the heterotic setting we have torus bundles $\pi_0 \colon X \to M$, $\hat{\pi}_0 \colon \hat{X} \to M$ equipped with principal $G$-bundles $\sigma \colon P \to X$, $\hat{\sigma} \colon P \to \hat{X}$. Now $H,\hat{H}$ are invariant $3$-forms on $P,\hat{P}$ such that together with extended actions $\xi,\hat{\xi}$ we have $d_G(H + \xi) = c = d_G(\hat{H} + \hat{\xi})$. The extended actions determine heterotic Courant algebroids $\mathcal{H},\hat{\mathcal{H}}$ on $X,\hat{X}$ which carry torus actions. For heterotic 
$T$-duality we require the existence of a principal $G$-bundle $\sigma_0 \colon P_0 \to M$ such that $P,\hat{P}$ are obtained from $P_0$ by pullback. Observe that the projections $\pi \colon P \to P_0$, $\hat{\pi} \colon \hat{P} \to P_0$ are principal torus bundles over a common base. Moreover, they are equipped with exact Courant algebroids $E, \hat E$ associated with the invariant closed 3-forms $H, \hat H$. Taking advantage of this fact, we show in Section \ref{sectdflux} that for heterotic T-duality Equation (\ref{equhhhat1}) is replaced by the following equation on the fibre product $P \times_{P_0} \hat{P}$
\begin{equation}\label{equhhhat2}
( \hat{H} + \hat{\xi} ) - (H + \xi) = d_G \langle \theta \wedge \hat{\theta} \rangle,
\end{equation}
where $\theta,\hat{\theta}$ are connections for the torus bundles $P \to P_0$, $\hat{P} \to P_0$. Notice that if $H + \xi$ is a solution to (\ref{equextended}) then so is $\hat{H} + \hat{\xi}$. In addition to the usual exchange of Chern classes and $H$-classes, Equation (\ref{equhhhat2}) captures some features unique to heterotic T-duality, namely the change in extended action from $\xi$ to $\hat{\xi}$. Since an extended action $\xi$ corresponds to a gauge connection $A$, Equation (\ref{equhhhat2}) describes the change in connection brought on by heterotic T-duality. Passing from equivariant forms to the usual de Rham complex (\ref{equhhhat2}) reduces to (\ref{equhhhat1}), hence we obtain an isomorphism of exact Courant algebroids $\phi \colon E/T^n \to \hat{E}/\hat{T}^n$. Using (\ref{equhhhat2}) we show that $\phi$ exchanges extended actions $\xi,\hat{\xi}$ (Proposition \ref{propisohet}), hence on reduction we obtain an isomorphism $\mathcal{H}/T^n \simeq \hat{\mathcal{H}}/\hat{T}^n$ of transitive Courant 
algebroids. We have thus obtained a T-duality isomorphism in the heterotic setting by reduction from the exact case.\\

In Section \ref{secgmr} we consider generalised metrics on heterotic Courant algebroids. Proposition \ref{genmetclass} gives an equivalence between admissible generalised metrics and triples $(g,A,H)$, consisting of a Riemannian metric $g$, a $G$-connection $A$ with curvature $F$ and a $3$-form $H$ such that $dH = \langle F \wedge F \rangle$. This is the bosonic field content of the low energy limit of heterotic string theory. More accurately, one should in addition include a metric connection $\nabla^{T}$ with curvature $R$ and consider the gauge group $G \times O(m)$, for $m$ the dimension of $X$. Then the $3$-form $H$ will satisfy the anomaly cancellation condition (\ref{gresch}), as required for heterotic string theory. Using an isomorphism of heterotic Courant algebroids $\mathcal{H}/T^n \simeq \hat{\mathcal{H}}/\hat{T}^n$ we may send a generalised metric $(g,A,H)$ on $X$ to a corresponding generalised metric $(\hat{g},\hat{A},\hat{H})$ on $\hat{X}$. We determine in detail the relation between 
these generalised metrics (Proposition \ref{globalhb}).\\

In Section \ref{sechetd} we show that $(\hat{g},\hat{A},\hat{H})$ is a solution of the heterotic equations of motion if and only if $(g,A,H)$ is a solution (Proposition \ref{prophetequpres}), establishing that heterotic T-duality is a symmetry of these equations. To do this we first prove that the heterotic equations of motion may be obtained by reduction from the type II equations (Proposition \ref{einstlift}) and then verify that T-duality preserves the latter equations (Proposition \ref{eetd}). Given the complexity of the transformation laws for generalised metrics, it is a remarkable fact that the equations of motion can be matched up. Moreover the manner in which the equations are matched up involves a series of unexpected cancellations, brought to light through the use of Courant algebroids and generalised metrics.\\

We give a brief description of each section of the paper. Section \ref{secrca} reviews Courant algebroids and their reduction by group actions. We distinguish two notions of reduction, simple reduction and reduction by extended action and give conditions for these reduction procedures to commute. Section \ref{sechetca} introduces heterotic Courant algebroids, gives their classification and their construction from string classes and extended actions. Section \ref{sechettd} develops the theory of heterotic T-duality by reduction, culminating in an isomorphism of heterotic Courant algebroids. In Section \ref{secgmr} we consider generalised metrics on heterotic Courant algebroids and their transformation under T-duality. Finally in Section \ref{sechetd} we consider the heterotic equations of motion and show they are preserved by T-duality.


\section{Reduction of Courant algebroids}\label{secrca}


\subsection{Courant algebroids and symmetries}

Let $(E , [ \, \, , \, \, ], \rho , \langle \; , \; \rangle)$ consist of a vector bundle $E \to M$ over a smooth manifold $M$, a bilinear operator $[ \, \, , \, \, ] \colon \Gamma(E) \otimes \Gamma(E) \to \Gamma(E)$ on the space of sections of $E$, a bundle map $\rho \colon E \to TM$ and $\langle \, \, , \, \, \rangle$ a non-degenerate bilinear form on $E$. Since $\langle \, \, , \, \, \rangle$ is non-degenerate we may use it together with $\rho$ to define a natural map from $T^*M$ to $E$. Explicitly a $1$-form $\xi \in \Gamma(T^*M)$ is mapped to a section $\xi' = \frac{1}{2}\rho^*(\xi)$ of $E$ such that for all $e \in \Gamma(E)$ we have $\langle \xi' , e \rangle = \frac{1}{2} \xi(\rho(e))$.

\begin{definition}[\cite{lwx}]\label{defca}
The data $(E , [ \, \, , \, \, ], \rho , \langle \, \, , \, \, \rangle)$ is a {\em Courant algebroid} if the following conditions hold for all $a,b,c \in \Gamma(E)$:
\begin{itemize}
\item{$[ a , [b,c] ] = [ [a,b] , c ]+ [ b , [a,c] ]$}
\item{$[a,b] + [b,a] = 2 \, d\langle a , b \rangle$}
\item{$\rho(a)\langle b , c \rangle = \langle [ a,b] , c \rangle + \langle b , [ a,c ] \rangle$.}
\end{itemize}
We call $[ \, \, , \, \, ]$ the {\em Dorfman bracket}, $\rho$ the {\em anchor} and $\langle \, \, , \, \, \rangle$ the {\em pairing} of $E$.
\end{definition}

Two properties of Courant algebroids which follow from the above axioms but are often included as separate axioms are:
\begin{itemize}
\item{$ [a , fb ] = f[a,b] + \rho(a)(f)b $}
\item{$\rho [a,b] = [\rho(a) , \rho(b)]$}
\end{itemize}
for every $f \in \mathcal{C}^\infty(M)$ and $a,b \in \Gamma(E)$.\\

We say that a Courant algebroid $E$ is {\em transitive} if the anchor $\rho$ is surjective and that $E$ is {\em exact} if $E$ is transitive and the kernel of the anchor coincides with the image of the map $\frac{1}{2}\rho^* \colon T^*M \to E$. When $E$ is transitive the map $\frac{1}{2}\rho^*$ is injective, so we can identify $T^*M$ as a subbundle of $E$. Since we will be concerned exclusively with transitive Courant algebroids we will identify $1$-forms with their image in $E$ without further mention.\\

Let $E$ be an exact Courant algebroid on $M$, so we have an exact sequence $0 \to T^*M \to E \buildrel \rho \over \to TM \to 0$. An {\em isotropic splitting} for $E$, or {\em splitting} for short, is a section $s \colon TM \to E$ of $\rho$ such that the image $s(TM) \subseteq E$ is isotropic with respect to the pairing on $E$. Given a splitting $s \colon TM \to E$ we obtain an identification $E = TM \oplus T^*M$ with anchor given by projection to $TM$ and pairing $\langle X + \xi , Y + \eta \rangle = \frac{1}{2}( i_X \eta + i_Y \xi )$, for tangent vectors $X,Y$ and $1$-forms $\xi,\eta$. There exists a closed $3$-form $H$ on $M$ such that the bracket on $E$ is as follows \cite{sw}:
\begin{equation}\label{htwist}
[ X + \xi , Y + \eta ]_H = [X,Y] + \mathcal{L}_X \eta - i_Y d\xi + i_Y i_X H.
\end{equation}
Choosing a different splitting for $E$ has the effect of changing $H$ by an exact term. The cohomology class $[H] \in H^3(M,\mathbb{R})$, which is independent of the choice of splitting, is called the {\em \v{S}evera class} of the exact Courant algebroid \cite{sev}. Conversely given a closed $3$-form $H$, (\ref{htwist}) defines an exact Courant algebroid structure on $E = TM \oplus T^*M$. We call the bracket $[ \, \, , \, \, ]_H$ the {\em $H$-twisted Dorfman bracket} on $E$. This construction determines a bijection between isomorphism classes of exact Courant algebroids and $H^3(M , \mathbb{R})$.


\begin{definition}
Let $(E , [ \, \, , \, \, ], \rho , \langle \, \, , \, \, \rangle)$, $(E' , [ \, \, , \, \, ]', \rho' , \langle \, \, , \, \, \rangle')$ be Courant algebroids on $M$. An {\em isomorphism} $\phi \colon (E , [ \, \, , \, \, ], \rho , \langle \, \, , \, \, \rangle) \to (E' , [ \, \, , \, \, ]', \rho' , \langle \, \, , \, \, \rangle')$, or $\phi \colon E \to E'$ for short consists of a diffeomorphism $f \colon M \to M$ and a bundle isomorphism $\phi \colon E \to E'$ covering $f$ such that the induced map of sections $\phi_* \colon \Gamma(E) \to \Gamma(E')$ given by $\phi_*(a) = \phi \circ a \circ f^{-1} $ interchanges the Courant algebroid structures:
\begin{itemize}
\item{$ [ \phi_* a , \phi_* b ]' = \phi_* [ a , b ]$}
\item{$ \rho' ( \phi_* a) = f_* \rho(a) $}
\item{$\langle \phi_* a , \phi_* b \rangle' =  (f^{-1})^* \langle a , b \rangle$.}
\end{itemize}
In this case we say that the isomorphism $\phi$ {\em covers} $f$ or that $\phi$ is a {\em lift} of $f$.
\end{definition}
When $E = E'$ we have Courant algebroid automorphisms. The corresponding infinitesimal notion is given by derivations:
\begin{definition}
Let $(E , [ \, \, , \, \, ], \rho , \langle \, \, , \, \, \rangle)$ be a Courant algebroid on $M$. A {\em derivation} $D \colon (E , [ \, \, , \, \, ], \rho , \langle \, \, , \, \, \rangle) \to (E , [ \, \, , \, \, ], \rho , \langle \, \, , \, \, \rangle)$, or $D \colon E \to E$ for short consists of an endomorphism $D \colon\Gamma(E) \to \Gamma(E)$ and a vector field $X$ such that:
\begin{itemize}
\item{$D [a,b] = [ Da , b ] + [ a , Db ]$}
\item{$ X \langle a , b \rangle = \langle Da , b \rangle + \langle a , Db \rangle$.}
\end{itemize}
From these properties it follows that the vector field is uniquely determined from $D$, so we denote $X$ by $X = \rho(D)$. The space ${\rm Der}(E)$ of derivations of $E$ forms a Lie algebra. 
\end{definition}

From the definition we obtain the identities $D(fa) = \rho(D)(f)a + fD(a)$ and $\rho(Da) = [\rho(D) , \rho(a)]$, where $D$ is a derivation, $a \in \Gamma(E)$, $f \in \mathcal{C}^\infty(M)$. In particular the natural map $\rho \colon {\rm Der}(E) \to {\rm Vect}(M)$ sending a derivation to the corresponding vector field on $M$ is a Lie algebra homomorphism. Observe that for any section $a \in \Gamma(E)$ the adjoint action ${\rm ad}_a(b) = [a , b ]$ is a derivation. We say that a derivation is {\em inner} if it is given by the adjoint action of some section of $E$. Next we are interested in reducing a Courant algebroid by a group of symmetries and observe two such notions of reduction.


\subsection{Simple reduction}\label{secsr}

Let $G$ be a Lie group and suppose that $G$ acts on $M$ on the right by diffeomorphisms. Let $\mathfrak{g}$ denote the Lie algebra of $G$, defined using left invariant vector fields on $G$. By differentiation a right action of $G$ determines a Lie algebra homomorphism\footnote{For a right action taking left invariant vector fields on $\mathfrak{g}$ ensures that $\psi$ is a Lie algebra homomorphism. For a left action on $M$ we should define $\mathfrak{g}$ using right invariant vector fields on $G$ instead.} $\psi \colon \mathfrak{g} \to {\rm Vect}(M)$ to the Lie algebra of vector fields on $M$.
\begin{definition}
A {\em lifted action} of $G$ on $E$ is a right action of $G$ on $E$ by automorphisms covering the right action of $G$ on $M$ by diffeomorphisms. A {\em lifted infinitesimal action} is a Lie algebra homomorphism $\tilde{\psi} \colon \mathfrak{g} \to {\rm Der}(E)$ covering a homomorphism $\psi \colon \mathfrak{g} \to {\rm Vect}(M)$.
\end{definition}

Let $G$ act on $M$ on the right and let $\psi \colon \mathfrak{g} \to {\rm Vect}(M)$ be the corresponding infinitesimal action. Suppose we have a lifted action of $G$ to automorphisms of a Courant algebroid $E$. By differentiating we obtain a lifted infinitesimal action $\tilde{\psi} \colon\mathfrak{g} \to {\rm Der}(E)$ covering $\psi$. Conversely suppose we have a lifted infinitesimal action $\tilde{\psi}$ covering the infinitesimal action $\psi \colon \mathfrak{g} \to {\rm Vec}(M)$. If $\tilde{\psi}$ is obtained by differentiation of a lifted action of $G$ then we say that $\tilde{\psi}$ {\em integrates} to a lifted action of $G$ on $E$.\\

Suppose we are given a lifted action of $G$ on $M$ to a Courant algebroid $E$. We would like to define the quotient of $E$ by the lifted action of $G$. To do this we will assume that $M$ is a principal $G$-bundle so that the quotient space $M/G$ is a well behaved manifold. Since $G$ acts on $E$ by bundle isomorphisms covering the action on $M$ it is immediate that the quotient space $E/G$ is a vector bundle on $M/G$. Moreover there is a canonical identification $\Gamma( E/G ) = \Gamma(E)^G$ between sections of the quotient bundle $E/G$ and $G$-invariant sections of $E$, the space of which we denote by $\Gamma(E)^G$. We give $E/G$ the structure of a Courant algebroid. First observe that if $a,b$ are $G$-invariant sections of $E$ then $[ a, b ]$ and $\langle a , b \rangle$ are $G$-invariant, so $E/G$ naturally inherits a bracket $[ \, \, , \, \, ]$ and pairing $\langle \, \, , \, \, \rangle$. We define an anchor $\rho^G$ on $E/G$ by the composition $\Gamma(E)^G \buildrel \rho \over \to \Gamma(TM)^G \to \Gamma(
T(M/G))$, where the last map is induced by the natural projection $TM/G \to T(M/G)$.

\begin{proposition}
The data $(E/G , [ \, \, , \, \, ] , \rho^G , \langle \, \, , \, \, \rangle)$ obtained from a lifted action defines a Courant algebroid on $M/G$.
\end{proposition}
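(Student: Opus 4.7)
The plan is to verify each axiom of a Courant algebroid for $(E/G, [\,,\,], \rho^G, \langle\,,\,\rangle)$ by exploiting the identification $\Gamma(E/G) = \Gamma(E)^G$ and pulling each axiom back to the corresponding one on $E$ restricted to $G$-invariant sections. The guiding principle is that every piece of structure on $E/G$ is, by construction, the quotient of a $G$-equivariant piece of structure on $E$, and the axioms of Definition \ref{defca} are tensorial identities that survive this restriction.

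First I would check that the three pieces of structure descend. Since $G$ acts by Courant algebroid automorphisms, the bracket and pairing are $G$-equivariant, so for $a,b \in \Gamma(E)^G$ both $[a,b]$ and $\langle a,b\rangle$ are again $G$-invariant, giving well-defined operations on $\Gamma(E/G)$. The pairing remains fibrewise non-degenerate because each fibre of $E/G$ at $[x]$ is isomorphic (via any lift) to the fibre $E_x$ on which $\langle\,,\,\rangle$ is non-degenerate and $G$ acts by isometries. For the anchor, $G$-equivariance of $\rho$ sends $\Gamma(E)^G$ into $\Gamma(TM)^G$, and invariant vector fields on a principal $G$-bundle $M \to M/G$ descend uniquely to vector fields on $M/G$, so $\rho^G$ is well-defined and $C^\infty(M/G)$-linear (using that $\pi^*\colon C^\infty(M/G) \to C^\infty(M)^G$ is an isomorphism).

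Next I would verify the three axioms. The Jacobi identity and the third axiom $\rho(a)\langle b,c\rangle = \langle [a,b], c\rangle + \langle b, [a,c]\rangle$ hold on all of $\Gamma(E)$ and involve only the bracket, pairing and anchor; restricting both sides to $\Gamma(E)^G$ yields the corresponding identities in $\Gamma(E/G)$, once one notes that for $a \in \Gamma(E)^G$ the $G$-invariant function $\rho(a)\langle b,c\rangle$ on $M$ is the pullback of $\rho^G(a)\langle b,c\rangle$ on $M/G$.

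The most delicate step is the symmetry axiom $[a,b] + [b,a] = 2 d\langle a,b\rangle$, since the right-hand side implicitly uses the inclusion $\tfrac{1}{2}\rho^*\colon T^*M \to E$, and we must match it with the inclusion $\tfrac{1}{2}(\rho^G)^*\colon T^*(M/G) \to E/G$. For $a,b \in \Gamma(E)^G$ the function $\langle a,b\rangle$ is $\pi^* f$ for some $f \in C^\infty(M/G)$, so $d\langle a,b\rangle = \pi^* df$ is a basic $1$-form. Using the defining relation $\langle \tfrac{1}{2}\rho^*\xi, e\rangle = \tfrac{1}{2}\xi(\rho(e))$ and the compatibility $\rho^G = \pi_* \circ \rho$ on invariant sections, a direct computation shows that for any $\eta \in \Omega^1(M/G)$ the $G$-invariant section $\tfrac{1}{2}\rho^*(\pi^*\eta) \in \Gamma(E)^G$ corresponds under the identification $\Gamma(E)^G = \Gamma(E/G)$ to $\tfrac{1}{2}(\rho^G)^*\eta$. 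Thus the symmetry axiom on $E$ descends to the symmetry axiom on $E/G$. The main obstacle is this last identification; everything else reduces to the observation that $G$-equivariance plus the assumption that $M \to M/G$ is a principal bundle lets us transfer all tensorial identities between $E$ and $E/G$ without loss.
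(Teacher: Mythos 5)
Your verification is correct: the paper states this proposition without proof, treating it as routine, and your argument supplies exactly the expected details — descent of bracket, pairing and anchor to $\Gamma(E)^G \cong \Gamma(E/G)$, plus the one genuinely delicate point, namely that $\tfrac{1}{2}\rho^*(\pi^*\eta)$ is the invariant section corresponding to $\tfrac{1}{2}(\rho^G)^*\eta$, so that the symmetry axiom descends with the correct interpretation of $d\langle a,b\rangle$. Nothing is missing.
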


We call $E/G$ the {\em simple reduction} of $E$ by the lifted action of $G$.


\subsection{Extended actions}\label{secexa}

Let us recall the notion of extended actions and their reductions as introduced in \cite{bcg}. It will be necessary to adapt their framework to a class of Courant algebroids which are transitive but not exact. We focus on the case which is most relevant to us, the case of trivially extended actions.\\

Let $G$ act on a manifold $M$ and suppose we have a lifted action $G \to {\rm Aut}(E)$ of $G$ to an action on a Courant algebroid $E$ over $M$. By differentiation we have an infinitesimal lifted action $\mu \colon \mathfrak{g} \to {\rm Der}(E)$. Let ${\rm ad} \colon \Gamma(E) \to {\rm Der}(E)$ denote the adjoint map taking sections of $E$ to inner derivations. If the image of $\mu$ lies in the subspace of inner derivations then we can consider lifting $\mu$ to a map from $\mathfrak{g}$ to $\Gamma(E)$. However, we would like to lift $\mu$ in a way that respects the algebra structures on $\mathfrak{g}$ and $\Gamma(E)$. In general there are obstructions to doing this, so one is lead to consider extensions of $\mathfrak{g}$ \cite{bcg}. In the situation which concerns us however the obstruction vanishes, thus we will concern ourselves only with the special case of trivially extended actions.
\begin{definition}[\cite{bcg}]
Let $G$ be a connected Lie group acting on $M$ with infinitesimal action $\psi \colon \mathfrak{g} \to \Gamma(TM)$. Let $E$ be a transitive Courant algebroid on $M$, not necessarily exact. A {\em trivially extended action} is a map $\alpha \colon \mathfrak{g} \to \Gamma(E)$ such that $\alpha$ is a homomorphism of algebras, $\rho \circ \alpha = \psi$ and the induced adjoint action of $\mathfrak{g}$ on $E$ integrates to an action of $G$ on $E$.
\end{definition}

In the case that $G$ is compact and $E$ is an exact Courant algebroid there is a characterisation of trivially extended actions in terms of the Cartan complex for equivariant cohomology. Suppose that we have an extended action of $G$ on $E$. Since $G$ is compact find a $G$-invariant splitting of $E$. Thus we obtain a closed $3$-form $H$ such that $E = TM \oplus T^*M$ with $H$-twisted Dorfman bracket. With respect to this splitting the infinitesimal action of $G$ on $E$ necessarily has the form $e( Y + \eta ) = \mathcal{L}_{\psi(e)}(Y + \eta)$ for all $e \in \mathfrak{g}$. In particular, it follows that $H$ is $G$-invariant. Since this is an extended action we have that $e(Y+\eta) = [ \alpha(e) , Y+\eta] = [\psi(e)+\xi(e) , Y + \eta] = \mathcal{L}_{\psi(e)}(Y + \eta) + i_Y( i_{\psi(e)} H - d\xi(e) )$. We therefore must have
\begin{equation}\label{exacond}
d\xi(e) = i_{\psi(e)} H
\end{equation}
for all $e \in \mathfrak{g}$. The condition that $\alpha$ is an algebra homomorphism is that $\psi([a,b]) + \xi([a,b]) = [ \psi(a)+\xi(a),\psi(b)+\xi(b)] = \mathcal{L}_{\psi(a)}( \psi(b) + \xi(b))$, or simply 
\begin{equation}\label{exacond2}
\mathcal{L}_{\psi(a)}(\xi(b)) = \xi([a,b]).
\end{equation}
This is exactly the condition that $\xi$ thought of as a $\mathfrak{g}^*$-valued $1$-form is equivariant. The conditions (\ref{exacond}),(\ref{exacond2}) can be expressed using the Cartan complex
\begin{equation*}
\Omega^k_G(M) = \bigoplus_{2p+q = k} \Omega^q(M , S^p \mathfrak{g}^* )^G
\end{equation*}
with differential $d_G = d - \iota$ given by
\begin{equation*}
d_G( \omega ) = d \omega  - e^a i_{\psi(e_a)} \omega
\end{equation*}
where $e_a$ denotes a basis for $\mathfrak{g}$ and $e^a$ the corresponding dual basis. Following \cite{bcg} we set $\Phi = H + \xi \in \Omega^3_G(M)$. Then $\xi$ determines an extended action if and only if
\begin{equation*}
d_G( \Phi ) = -\langle \alpha(e_i) , \alpha(e_j) \rangle e^{ij}.
\end{equation*}
We write this as $d_G(\Phi) = c$, where $c = -\langle \alpha(e_i) , \alpha(e_j) \rangle e^{ij} \in \Omega^0(M,S^2 \mathfrak{g}^*)^G$ thought of as a bilinear form on $\mathfrak{g}$ is $-1$ times the pullback under $\alpha \colon \mathfrak{g} \to \Gamma(E)$ of the pairing $\langle \, \, , \, \, \rangle$ on $E$. Note that since an extended action of this form simply acts on $E = TM \oplus T^*M$ by Lie derivative, there is no obstruction to integrating this to a $G$-action on $E$ provided that $G$ acts on $M$.\\

Two extended actions $\xi,\xi'$ are considered equivalent if there is an equivariant function $f \colon M \to \mathfrak{g}^*$ such that for each $a \in \mathfrak{g}$ we have $\xi'(a) = \xi(a) + df(a)$. In terms of the corresponding equivariant $3$-forms $\Phi = H + \xi$, $\Phi' = H + \xi'$ we have $\Phi'  = \Phi + d_G(f)$, so that $\Phi,\Phi'$ differ by a $d_G$-exact term. In addition we observe that a change in choice of invariant splitting for $E$ corresponds to a change $\Phi \mapsto \Phi + d_G(\beta)$, where $\beta \in \Omega^2(M)^G$ is the invariant $2$-form relating the splittings. In this way we obtain a correspondence between solutions of the equation $d_G(\Phi) = c$ modulo $d_G$-exact terms and extended actions up to equivalence.
\begin{proposition}[\cite{bcg}, Theorem 2.13]\label{extact1}
Let $G$ be a compact group. Then trivially extended $G$-actions $\alpha \colon \mathfrak{g} \to \Gamma(E)$ on a fixed exact Courant algebroid $E$ with prescribed quadratic form $c(a) = -\langle \alpha(a) , \alpha(a) \rangle$ are up to equivalence in bijection with solutions to $d_G \Phi = c$ modulo $d_G$-exact forms, where $\Phi = H + \xi$ is an equivariant $3$-form and $H$ represents the \v{S}evera class of $E$.
\end{proposition}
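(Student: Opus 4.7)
The plan is to construct maps in both directions between trivially extended actions (up to equivalence) and equivariant $3$-forms satisfying $d_G\Phi = c$ (modulo $d_G$-exact terms), then verify the equivalence relations on both sides match. The strategy closely follows the reasoning already indicated in the excerpt preceding the statement, but must be assembled into a genuine bijection.

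For the forward map, given a trivially extended action $\alpha \colon \mathfrak{g} \to \Gamma(E)$, I would use compactness of $G$ to average an isotropic splitting into a $G$-invariant one, realising $E \cong TM \oplus T^*M$ with $H$-twisted Dorfman bracket for a closed $G$-invariant $3$-form $H$ representing the \v{S}evera class. Writing $\alpha(e) = \psi(e) + \xi(e)$, the requirement that $\mathrm{ad}_{\alpha(e)}$ coincides with the infinitesimal derivation of the lifted action, which in an invariant splitting is just $\mathcal{L}_{\psi(e)}$, expands via the Dorfman bracket formula to force $d\xi(e) = i_{\psi(e)}H$. The algebra homomorphism property of $\alpha$ then projects, on the $T^*M$ component, to the equivariance $\mathcal{L}_{\psi(a)}\xi(b) = \xi([a,b])$. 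Both conditions are packaged into the single equation $d_G(H+\xi) = c$ by a direct computation in the Cartan complex, the quadratic term $-e^a\, i_{\psi(e_a)}\xi$ coinciding after symmetrization with $-\langle \alpha(e_i),\alpha(e_j)\rangle e^{ij} = c$.

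For the reverse direction, given an equivariant $3$-form $\Phi = H + \xi$ with $d_G\Phi = c$, I would equip $TM \oplus T^*M$ with the $H$-twisted bracket (giving an exact Courant algebroid with the prescribed \v{S}evera class) and set $\alpha(e) = \psi(e) + \xi(e)$. The two components of $d_G\Phi = c$ yield exactly what is needed for $\alpha$ to be a Lie algebra homomorphism into $\Gamma(E)$ with $\rho\circ\alpha = \psi$ and the correct pairing. Since with respect to the invariant splitting the adjoint action of $\alpha(e)$ reduces to $\mathcal{L}_{\psi(e)}$, and $H$ is $G$-invariant, the infinitesimal action integrates to a $G$-action on $E$ covering the given one on $M$, so $\alpha$ is genuinely a trivially extended action. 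To match equivalence relations, I would observe that a change of $G$-invariant splitting by $\beta \in \Omega^2(M)^G$ produces $\Phi \mapsto \Phi - d_G\beta$, while replacing $\alpha$ by $\alpha'(e) = \alpha(e) + df(e)$ for equivariant $f \colon M \to \mathfrak{g}^*$ produces $\Phi \mapsto \Phi + d_G f$. Conversely any $d_G$-exact shift decomposes into such a combination, completing the bijection.

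The main obstacle I anticipate is the integration step in the reverse direction: given only the algebraic data $\Phi$, one must certify that the resulting infinitesimal action actually lifts to a $G$-action on $E$. Compactness of $G$ is essential here, since it allows a $G$-invariant splitting to exist in the first place, making the adjoint action of $\alpha(e)$ equal to Lie derivative and hence integrable for free. A secondary but easy-to-botch point is keeping track of the symmetrization conventions in $S^2\mathfrak{g}^*$ so that $-e^a i_{\psi(e_a)}\xi$ and $-\langle \alpha(e_i),\alpha(e_j)\rangle e^{ij}$ agree on the nose rather than up to a spurious factor of $2$.
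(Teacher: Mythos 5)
Your proposal is correct and follows essentially the same route as the paper, which does not give a formal proof (the result is cited from Bursztyn--Cavalcanti--Gualtieri) but derives exactly this correspondence in the discussion preceding the statement: average to a $G$-invariant splitting, extract the conditions $d\xi(e)=i_{\psi(e)}H$ and $\mathcal{L}_{\psi(a)}\xi(b)=\xi([a,b])$, package them as $d_G\Phi=c$, and match the equivalence relation with $d_G$-exact shifts coming from equivariant $\mathfrak{g}^*$-valued functions and changes of invariant splitting. Your observation that the invariant splitting makes the adjoint action a Lie derivative, so integration is automatic, is precisely the point the paper makes as well.
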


Our objective now is to extend Proposition \ref{extact1} to a more general class of Courant algebroids. We will consider Courant algebroids of the following form: let $L$ be a Lie algebroid on $M$, which for simplicity we will assume to be transitive. Let $\Omega^k(L) = \Gamma( M , \wedge^k L^*)$ be the space of degree $k$ Lie algebroid forms and $d_L : \Omega^k(L) \to \Omega^{k+1}(L)$ the Lie algebroid differential. Let $H \in \Omega^3(L)$ be a $d_L$-closed $3$-form. Set $E = L \oplus L^*$ with the following bracket
\begin{equation*}
[ X + \xi , Y + \eta] = [X,Y]_L + \mathcal{L}_X \eta - i_Y d_L \xi + i_Y i_X H
\end{equation*}
where $X,Y \in \Gamma(L)$, $\xi,\eta \in \Gamma(L^*)$ and $\mathcal{L}_X \eta = i_X d_L \eta + d_L i_X \eta$. It is straightforward to see that this bracket makes $E$ into a Courant algebroid \cite{lb}. The pairing on $E$ is just the dual pairing of $L$ and $L^*$:
\begin{equation*}
\langle X + \xi , Y + \eta \rangle = \frac{1}{2}( i_X \eta + i_Y \xi )
\end{equation*}
and the anchor is the projection to $L$ followed by the anchor of $L$. Note also that the isomorphism class of $E$ depends only on the cohomology class $[H] \in H^3(L)$ in Lie algebroid cohomology. We call $[H]$ the \v{S}evera class of $E$ and $E$ the Courant algebroid associated to $[H]$. We note that $E$ naturally fits into an short exact sequence $0 \to L ^* \to E \to L \to 0$. An isotropic splitting $s \colon L \to E$ of this sequence will be called a {\em splitting} of $E$. It is immediate that splittings of $E$ exist. A choice of splitting $s \colon L \to E$ determines a $3$-form $H$ representing the \v{S}evera class of $E$, namely $H(a,b,c) = 2\langle [s(a) , s(b) ] , s(c) \rangle $. It is clear also that different choices of splitting give rise to different representatives for the \v{S}evera class.\\

As a prerequisite to obtaining an extended action on $E$ we first need to assume the existence of an action of $G$ on $L$ lifting an action of $G$ on $M$. Therefore we will assume that there is a Lie algebra homomorphism $\psi \colon \mathfrak{g} \to \Gamma(L)$ with the property that the adjoint action of $\psi$ integrates to an action of $G$ on $L$ covering an action of $G$ on $M$. If $\rho \colon L \to TM$ is the Lie algebroid anchor then we obtain a commutative diagram of Lie algebras
\begin{equation*}\xymatrix{
\mathfrak{g} \ar[r]^{\psi} \ar[dr] & \Gamma(L) \ar[d]^{\rho} \\
& \Gamma(TM)
}
\end{equation*}
For each $a \in \mathfrak{g}$ we obtain a contraction operator $i_{\psi(a)} \colon \Omega^k(L) \to \Omega^{k-1}(L)$. There is a natural infinitesimal action of $\mathfrak{g}$ on $\Omega^*(L)$ by Lie derivative $\mathcal{L}_{\psi(a)} = i_{\psi(a)} d_L + d_L i_{\psi(a)}$ which gives $\Omega^*(L)$ the structure of a $\mathfrak{g}$-differential complex in the language of \cite{gin}. Following \cite{gin} we may now define the equivariant cohomology for $\Omega^*(L)$ from a Cartan model construction (see also \cite{bcrr}). Let $\Omega^q( L , S^p \mathfrak{g})^{\mathfrak{g}} \subset \Omega^q(L , S^p \mathfrak{g}^*)$ denote the subspace of equivariant Lie algebroid forms valued in $S^* \mathfrak{g}^*$. The Cartan model $( \Omega^k_G(L) , d_G )$ is given by the complex
\begin{equation*}
\Omega^k_G(L) = \bigoplus_{2p+q = k} \Omega^q( L , S^p \mathfrak{g}^* )^{\mathfrak{g}}
\end{equation*}
with equivariant differential $d_G$ given by
\begin{equation*}
d_G(\omega) = d_L \omega - e^a i_{\psi(e_a)} \omega.
\end{equation*}

Now consider extended actions $\alpha \colon \mathfrak{g} \to \Gamma(E)$ of the form $\alpha = (\psi , \xi)$ for some map $\xi \colon \mathfrak{g} \to \Gamma(L^*)$. The conditions for $\alpha$ to be an extended action are a direct generalisation of (\ref{exacond}), (\ref{exacond2}) to the Lie algebroid setting. Therefore if we set $\Phi = H + \xi \in \Omega^3_G(L)$ and $c(a) = - \langle \alpha(a) , \alpha(a) \rangle$ then the condition for $\alpha$ to be an extended action is $d_G \Phi = c$, the same condition as in the exact case. As with the exact case we may say that extended actions corresponding to $\xi,\xi'$ are equivalent if there exists $f \in \Omega^0(L,\mathfrak{g}^*)^{\mathfrak{g}}$ such that $\xi' = \xi + d_L(f)$. Then modulo this equivalence relation we have a classification of extended actions parallel to Proposition \ref{extact1}. For convenience let us summarise this:
\begin{proposition}\label{extact2}
Let $G$ be a compact group, $L$ a transitive Lie algebroid and $E$ the Courant algebroid associated to a class in $H^3(L)$. Let $G$ act on $M$ and suppose that there is an infinitesimal action $\psi \colon \mathfrak{g} \to \Gamma(L)$ which integrates to an action of $G$ on $L$ covering the action on $M$. Fix a quadratic form $c \in \Omega^0(L,S^2\mathfrak{g}^*)^{\mathfrak{g}}$. There is a bijection between trivially extended $G$-actions $\alpha \colon \mathfrak{g} \to \Gamma(E)$ satisfying:
\begin{itemize}
\item{there exists an isotropic splitting $E = L \oplus L^*$ such that $\alpha$ has the form $\alpha = (\psi , \xi)$ for some $\xi : \mathfrak{g} \to \Gamma(L^*)$}
\item{$c(a) = -\langle \alpha(a) , \alpha(a) \rangle$ for all $a \in \mathfrak{g}$}
\end{itemize}
and solutions to $d_G \Phi = c$ modulo $d_G$-exact forms, where $\Phi = H + \xi$ is an equivariant $3$-form and $H$ represents the \v{S}evera class of $E$.
\end{proposition}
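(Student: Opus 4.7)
The strategy is to run the proof of Proposition \ref{extact1} essentially verbatim, with the de Rham complex $(\Omega^*(M), d)$ replaced throughout by the Lie algebroid complex $(\Omega^*(L), d_L)$ and the Cartan model on $M$ replaced by its Lie algebroid analogue $(\Omega^*_G(L), d_G)$. All the tools used in the exact case -- isotropic splittings of $E$, the $H$-twisted Dorfman bracket formula, $G$-invariant splittings via averaging, and Lie derivatives defined by $\mathcal{L}_{\psi(a)} = i_{\psi(a)} d_L + d_L i_{\psi(a)}$ -- have direct counterparts in the transitive Lie algebroid setting developed in the paragraphs preceding the statement.

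For the forward direction, let $\alpha$ satisfy the two listed conditions. Condition (1) provides an isotropic splitting $E \cong L \oplus L^*$ in which $\alpha(a) = \psi(a) + \xi(a)$ for some $\xi \colon \mathfrak{g} \to \Gamma(L^*)$. Averaging over the compact group $G$, which acts on $E$ by automorphisms and hence on the affine space of isotropic splittings, I would obtain a $G$-invariant isotropic splitting; since any shift of splitting by a $2$-form in $\Omega^2(L)$ alters only $\xi$ without disturbing the shape $\alpha = \psi + \xi$, the averaged splitting still presents $\alpha$ in this form. In the invariant splitting the bracket is the $H$-twisted Dorfman bracket of a $G$-invariant closed $3$-form $H \in \Omega^3(L)^G$ representing the \v{S}evera class. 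Expanding $[\alpha(a), Y + \eta]$ via the bracket formula, the requirement that the inner derivation $\mathrm{ad}_{\alpha(a)}$ agrees with the lifted $G$-action on $L \oplus L^*$ (which necessarily acts as $\mathcal{L}_{\psi(a)}$) forces $d_L \xi(a) = i_{\psi(a)} H$, the algebra homomorphism condition on $\alpha$ then reduces to the equivariance $\mathcal{L}_{\psi(a)} \xi(b) = \xi([a, b])$, and condition (2) becomes $c(a) = -i_{\psi(a)} \xi(a)$. These three identities package together into the single Cartan-model equation $d_G \Phi = c$ for $\Phi = H + \xi$, giving the forward map.

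For the reverse direction, given $\Phi = H + \xi \in \Omega^3_G(L)$ with $d_G \Phi = c$, I would define $E = L \oplus L^*$ with $H$-twisted Dorfman bracket and set $\alpha(a) = \psi(a) + \xi(a)$. Decomposing $d_G \Phi = c$ by bidegree in $\Omega^*(L, S^*\mathfrak{g}^*)^{\mathfrak{g}}$ recovers precisely the three conditions above, from which one checks that $\alpha$ is a Lie algebra homomorphism with $\rho \circ \alpha = \psi$ and inner action $\mathrm{ad}_{\alpha(a)}$ acting on $L \oplus L^*$ by $\mathcal{L}_{\psi(a)}$. The equivalence relations match up as in the exact case: two extended actions in the same invariant splitting differ by $\xi \mapsto \xi + d_L f$ with $f \in \Omega^0(L, \mathfrak{g}^*)^{\mathfrak{g}}$, shifting $\Phi$ by $d_G f$; and a change of $G$-invariant splitting adds $d_L \beta$ to $H$ for some $\beta \in \Omega^2(L)^G$, shifting $\Phi$ by $d_G \beta$. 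Both produce $d_G$-exact changes in $\Phi$, giving a well-defined bijection.

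The main obstacle is the integration step of the reverse direction: one must verify that the infinitesimal action of $\mathfrak{g}$ on $E$ by inner derivations $\mathrm{ad}_{\alpha(a)}$ really exponentiates to a $G$-action. This is where the running hypothesis that $\psi \colon \mathfrak{g} \to \Gamma(L)$ integrates to a $G$-action on $L$ covering the action on $M$ is essential: it supplies the $G$-action on $L$ and its dual $L^*$, whose diagonal lift to $L \oplus L^*$ provides the required global integration, and the identity $\mathrm{ad}_{\alpha(a)} = \mathcal{L}_{\psi(a)}$ established above ensures this global $G$-action does in fact integrate the inner action. Without the integration hypothesis on $\psi$ one would only obtain an infinitesimal version of the proposition.
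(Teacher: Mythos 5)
Your proposal is correct and follows essentially the same route as the paper, which likewise obtains Proposition \ref{extact2} by transplanting the exact-case argument of Proposition \ref{extact1} into the Lie algebroid Cartan model $(\Omega^*_G(L), d_G)$, deriving the analogues of (\ref{exacond}) and (\ref{exacond2}) in an invariant splitting and invoking the integrability hypothesis on $\psi$ to handle the integration of $\alpha$. The only cosmetic difference is that the paper treats the equivariance of $\xi$ as membership in the Cartan complex rather than as part of the equation $d_G\Phi = c$, a point your write-up effectively absorbs into the same packaging.
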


As with the exact case, once we assume that the action $\psi \colon \mathfrak{g} \to \Gamma(L)$ can be integrated to an action of $G$ on $L$ there is no obstruction to integrating $\alpha \colon \mathfrak{g} \to \Gamma(E)$ to an action of $G$ on $E$.


\subsection{Reduction by extended actions}\label{secrbea}

Let $G$ act on $M$ and suppose that $E$ is a Courant algebroid with a trivially extended action $\alpha \colon \mathfrak{g} \to \Gamma(E)$. In \cite{bcg} a theory of reduction for exact Courant algebroids with extended actions is developed. We will extend this to the more general class of Courant algebroids considered in Section \ref{secexa}, but for simplicity we will do so only for a restricted class of extended action.
\begin{definition}
An extended action $\alpha : \mathfrak{g} \to \Gamma(E)$ is called {\em non-degenerate} if for each point $x \in M$ the bilinear form $c_x \in S^2 \mathfrak{g}^*$ given by $c_x(a,b) = -\langle \alpha(a)(x) , \alpha(b)(x) \rangle$ is non-degenerate as a bilinear form on $\mathfrak{g}$.
\end{definition}
In the case of a non-degenerate extended action the map $\alpha \colon \mathfrak{g} \to \Gamma(E)$ is injective on each fibre of $E$ so that the image $K$ is a sub-bundle of $E$ which is non-degenerate with respect to the pairing on $E$. Let $K^\perp$ be the annihilator of $K$ with respect to the pairing on $E$.\\

Before considering the more general case let us suppose first that $E$ is an exact Courant algebroid. We will assume also that the action of $G$ on $M$ is free and proper so that $M$ is a principal $G$-bundle over $M/G$. Recall from \cite{bcg} that the big distribution $\Delta_b$ is given by $\Delta_b = \rho( K + K^\perp) \subset TM$. In our case we know that $K$ is non-degenerate with respect to the pairing on $E$ so that $K$ and $K^\perp$ are complementary and thus $\Delta_b = TM$. By \cite[Theorem 3.3]{bcg} we obtain a Courant algebroid $E_{{\rm red}}$ on $M/G$, the reduction of $E$. As a vector bundle $E_{{\rm red}}$ is given by
\begin{equation*}
E_{{\rm red}} = \frac{K^\perp}{K \cap K^{\perp}}  / G.
\end{equation*}
In our case we know that $K \cap K^\perp = 0$ so that $E_{{\rm red}} = K^\perp / G$. Sections of $E_{{\rm red}}$ can be identified with $G$-invariant sections of $K$ and in this way $E_{{\rm red}}$ inherits the structure of a Courant algebroid on $M$.\\

Now we consider the more general case where $E$ is the Courant algebroid associated to a transitive Lie algebroid $L$ and a class in $H^3(L)$. Assume that we have a non-degenerate extended action $\alpha \colon \mathfrak{g} \to \Gamma(E)$ of the form described in Proposition \ref{extact2}, associated to an infinitesimal action $\psi \colon \mathfrak{g} \to \Gamma(L)$. Let us again assume that $M \to M/G$ is a principal $G$-bundle. Then it is natural to define the reduction $E_{{\rm red}}$ of $E$ to be the bundle $E_{{\rm red}} = K^{\perp}/G$.
\begin{proposition}\label{redisca}
The bundle $E_{{\rm red}}$ naturally has the structure of a Courant algebroid on $M/G$. The Dorfman bracket and pairing on $E_{{\rm red}}$ are inherited from $E$ by identifying sections of $E_{{\rm red}}$ with $G$-invariant sections of $K^\perp \subseteq E$. The anchor on $E_{{\rm red}}$ is the composition
\begin{equation*}
K^\perp/G \to E/G \buildrel \rho \over \longrightarrow TM/G \to T(M/G).
\end{equation*}
\end{proposition}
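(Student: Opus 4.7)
The plan is to transfer the Courant algebroid structure from $E$ to $E_{\rm red} = K^\perp/G$ via the identification $\Gamma(E_{\rm red}) = \Gamma(K^\perp)^G$ of sections of $E_{\rm red}$ with $G$-invariant sections of $K^\perp \subset E$. The non-degeneracy hypothesis is essential here: it forces $K \cap K^\perp = 0$, so that $E = K \oplus K^\perp$ as a bundle, $K^\perp$ has constant rank, and the pairing of $E$ restricts non-degenerately to $K^\perp$. Since $K$ is $G$-invariant and the $G$-action preserves $\langle\,,\,\rangle$, the subbundle $K^\perp$ is also $G$-invariant, so $K^\perp/G$ is a well-defined vector bundle on $M/G$.

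The main step is to show the Dorfman bracket preserves $\Gamma(K^\perp)^G$. For $a,b \in \Gamma(K^\perp)^G$ and $k \in \mathfrak{g}$, $G$-invariance of $[a,b]$ follows from the Jacobi axiom applied to the inner derivation $[\alpha(k),\cdot\,]$: since $[\alpha(k),a] = 0 = [\alpha(k),b]$ by invariance, one obtains $[\alpha(k),[a,b]] = 0$. To show $[a,b] \in K^\perp$, the third Courant axiom gives
\begin{equation*}
\langle [a,b], \alpha(k)\rangle \;=\; \rho(a)\langle b, \alpha(k)\rangle \;-\; \langle b, [a,\alpha(k)]\rangle.
\end{equation*}
The first term vanishes because $b \in K^\perp$. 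For the second, the second Courant axiom combined with $\langle a,\alpha(k)\rangle = 0$ and $[\alpha(k),a]=0$ gives $[a,\alpha(k)] = 2d\langle a,\alpha(k)\rangle - [\alpha(k),a] = 0$, so the expression vanishes and $[a,b] \in \Gamma(K^\perp)^G$ as required.

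Next, the pairing descends because pairings of $G$-invariant sections are $G$-invariant functions on $M$, hence pull back from functions on $M/G$. The anchor $\rho \colon E \to TM$ is $G$-equivariant (the action is by Courant algebroid automorphisms), so invariant sections of $K^\perp$ are sent to invariant vector fields on $M$, which in turn project under $d\pi$ to vector fields on $M/G$; this defines $\rho_{\rm red}$ as the stated composition. Finally, the Courant algebroid axioms for $E_{\rm red}$ are inherited by restriction: Jacobi, skew-deviation and the derivation property hold on $\Gamma(K^\perp)^G$ because they hold on all of $\Gamma(E)$, while the Leibniz rule $[a, fb] = f[a,b] + \rho_{\rm red}(a)(f)b$ for $f \in C^\infty(M/G)$ reduces to the Leibniz rule on $E$ applied to $\pi^*f \in C^\infty(M)^G$, using $\rho(a)(\pi^* f) = \pi^*(\rho_{\rm red}(a)(f))$.

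The only nontrivial step is the verification $[a,b] \in K^\perp$; the rest is routine descent. The role played by non-degeneracy of the extended action is precisely to ensure that $K^\perp$ is a complementary subbundle with non-degenerate induced pairing, so that the quotient $K^\perp/G$ is a vector bundle of the expected rank and inherits a non-degenerate pairing on $M/G$.
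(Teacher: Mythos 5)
Your proof is correct and follows essentially the same route as the paper: the key step in both is showing $\Gamma(K^\perp)^G$ is closed under the Dorfman bracket by combining the invariance axiom $\rho(a)\langle b,c\rangle = \langle [a,b],c\rangle + \langle b,[a,c]\rangle$ with the symmetrized bracket axiom and $[\alpha(k),a]=0$, after which the pairing, anchor and Courant axioms descend routinely. Your additional remarks on non-degeneracy, $G$-invariance of the bracket and the Leibniz rule only flesh out steps the paper leaves to the reader.
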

\begin{proof}
We first need to show that the space $\Gamma(K^\perp)^G$ of $G$-invariant sections of $K^\perp$ is closed under the Dorfman bracket. For this let $v,w \in \Gamma(K^\perp)^G$. It is clear that $[v,w]$ is $G$-invariant so we need only show that $[v,w]$ is orthogonal to $\alpha(a)$ for any $a \in \mathfrak{g}$. We find
\begin{equation*}
\begin{aligned}
\langle \alpha(a) , [v,w] \rangle &= \rho(v) \langle a , w \rangle - \langle [v , \alpha(a) ] , w \rangle \\
& = \langle [\alpha(a) , v] , w \rangle - 2\langle d\langle v , \alpha(a) \rangle , w \rangle \\
& = 0
\end{aligned}
\end{equation*}
since $v,w$ are orthogonal to $\alpha(a)$ and $[\alpha(a),v] = 0$ since $v$ is $G$-invariant. This shows that $K^\perp / G$ naturally inherits a bracket from $E$. The bundle $E_{{\rm red}}$ inherits a pairing and anchor as described above. One verifies that these satisfy the axioms for a Courant algebroid on $M/G$ given in Definition \ref{defca}.
\end{proof}

We now show that equivalent extended actions produce isomorphic reductions. Although this is true more generally we restrict to the special case where the action of $G$ is free and proper. Consider changing an extended action $\Phi$ to an equivalent extended action $\Phi' = \Phi + d_G( f)$, where $f \in \Omega^0(L,\mathfrak{g}^* )^{ \mathfrak{g}}$ is an equivariant $\mathfrak{g}^*$-valued function. Let $A \in \Omega^1(M,\mathfrak{g})$ be a connection for the principal $G$-bundle $M \to M/G$. Then by pullback under the anchor $\rho \colon L \to TM$ we obtain a $\mathfrak{g}$-valued Lie algebroid $1$-form $B = \rho^*(A) \in \Omega^1(L,\mathfrak{g})$ with the property that $i_{\psi(e)} B = e$. Let $e_1, \dots , e_m$ be a basis for $\mathfrak{g}$ and $e^1, \dots , e^m$ the dual basis. If $f = f_i e^i$ then $f = -\iota( -f_iB^i)$, where $B^i = e^i(B)$. Thus $f_ie^i = d_G( -f_iB^i) + d_L(f_iB^i)$. Applying $d_G$ we obtain $d_G(f) = d_G( d_L(f_iB^i))$. Thus $\Phi' = \Phi + d_G(\beta)$ where $\beta = d_L(f_iB^i) \in 
\Omega^2(L)^{\mathfrak{g}}$, which is a closed, equivariant Lie algebroid $2$-form. We know that such $2$-forms act as automorphisms of $E$ changing the choice of splitting. It follows that the reduced Courant algebroids obtained from the extended actions $\Phi$ and $\Phi'$ are isomorphic.


\subsection{Commuting reductions}\label{seccomred}

We have introduced two types of reduction, simple reductions in Section \ref{secsr} and reduction by extended action in Section \ref{secrbea}. We now show that for commuting actions these reductions commute. Let $\sigma \colon P \to X$ be a principal $G$-bundle and suppose that the space $X$ is itself the total space of a principal $H$-bundle $\pi_0 \colon X \to M$. We assume here that $G,H$ are compact connected Lie groups with Lie algebras $\mathfrak{g},\mathfrak{h}$. When we consider T-duality we will take $H$ to be a torus, but it is not necessary to assume this yet.\\

Given a Courant algebroid on $P$ we are interested in reducing to a Courant algebroid on $X$ and then reducing again to $M$. In some situations it will be possible to reverse the order of the reductions. For this we assume that the $H$-action on $X$ lifts to a $H$-action on $P$ commuting with the $G$-action. Thus $P$ can be considered as a principal $G \times H$-bundle over $M$. We set $P_0 = P/H$, so that $P_0$ is a principal $G$-bundle over $M$. Define $\pi,\sigma_0$ to be the projections $\pi \colon P \to P_0$ and $\sigma_0 \colon P_0 \to M$. Then we have a commutative diagram
\begin{equation}\label{comdiag}
\xymatrix{
P \ar[r]^{\pi} \ar[d]^{\sigma} & P_0 \ar[d]^{\sigma_0} \\
X \ar[r]^{\pi_0} & M
}
\end{equation}
Let $E$ be a Courant algebroid on $P$. We assume that $E$ is the Courant algebroid associated to a transitive Lie algebroid $L$ and a cohomology class in $H^3(L)$. We assume that there is an infinitesimal action $\psi_{\mathfrak{g}} + \psi_{\mathfrak{h}} \colon \mathfrak{g} \oplus \mathfrak{h} \to \Gamma(L)$ which integrates to an action of $G \times H$ on $L$ covering the action on $P$. By averaging we can find a $G \times H$-invariant representative $h \in \Omega^3(L)$ for the \v{S}evera class of $E$ and thus we obtain an action of $G \times H$ on $E$. Note that the action of $G \times H$ on $L$ determines the action of $G \times H$ on $E$ up to isomorphism.

We suppose in addition that the $G$-action on $E$ comes from a non-degenerate extended action $\alpha_{\mathfrak{g}} \colon\mathfrak{g} \to \Gamma(E)$, where $\alpha_{\mathfrak{g}} = (\psi_{\mathfrak{g}} , \xi_{\mathfrak{g}})$ for some $\xi_{\mathfrak{g}} \in \Omega^1(L , \mathfrak{g}^*)^{\mathfrak{g}}$. Finally, in order for it to be possible to reduce $E$ twice we need to assume that the extended action $\alpha_{\mathfrak{g}}$ is $H$-invariant in the sense that for every $e \in \mathfrak{g}$ the section $\alpha_{ \mathfrak{g} }(e)$ is $H$-invariant.

Let $K \subseteq E$ be the image of $\alpha_{\mathfrak{g}}$ and $E_{{\rm red}} = K^\perp/G$ the reduction of $E$ by the extended action. Then $E_{{\rm red}}$ is a Courant algebroid on $X$. Let $E/H$ denote the simple reduction of $E$ by the action of $H$, which is a Courant algebroid on $P_0$.
\begin{proposition}\label{commutingred}
The action of $H$ on $E$ determines an action of $H$ on $E_{{\rm red}}$. Let $E_{{\rm red}}/H$ be the simple reduction of $E_{{\rm red}}$ by this action. The extended action $\alpha_{\mathfrak{g}}$ on $E$ determines an extended action on $E/H$. Let $(E/H)_{{\rm red}}$ denote the reduction by this extended action. There is a canonical isomorphism of Courant algebroids on $M$:
\begin{equation*}
E_{{\rm red}}/H \simeq (E/H)_{{\rm red}}.
\end{equation*}
\end{proposition}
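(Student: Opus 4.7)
The plan is to show that both reductions have the same underlying bundle $K^\perp/(G \times H)$ over $M$ and that each inherits its Courant algebroid structure from the same data on $E$, so the natural identification of total spaces is automatically a Courant algebroid isomorphism. The argument proceeds through the Lie algebroid version of reduction given in Proposition \ref{redisca}, exploiting $H$-invariance of $\alpha_{\mathfrak{g}}$ throughout.

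First I would verify that both reductions make sense. Because each $\alpha_{\mathfrak{g}}(e)$ is $H$-invariant, $H$ preserves the subbundle $K = \alpha_{\mathfrak{g}}(\mathfrak{g}) \subseteq E$ and hence its annihilator $K^\perp$; since the $G$- and $H$-actions commute, $H$ descends to an action on $E_{\rm red} = K^\perp/G$, and $E_{\rm red}/H$ is defined. Conversely, $H$-invariant sections of $E$ are sections of $E/H$, so $\alpha_{\mathfrak{g}}$ descends to a map $\bar{\alpha}_{\mathfrak{g}} \colon \mathfrak{g} \to \Gamma(E/H)$. Since $E/H$ is the Courant algebroid on $P_0$ associated to the transitive Lie algebroid $L/H$ and an $H$-invariant representative $h$ of the \v{S}evera class, and since the properties of being an algebra homomorphism, covering $\psi_{\mathfrak{g}}$, integrating to a $G$-action, and non-degeneracy of the quadratic form all descend from the corresponding properties of $\alpha_{\mathfrak{g}}$, the map $\bar{\alpha}_{\mathfrak{g}}$ is a non-degenerate trivially extended $G$-action of the type classified by Proposition \ref{extact2}; its image is $\bar{K} = K/H$.

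Next I would identify total spaces. The pairing on $E/H$ is inherited pointwise from the $H$-invariant pairing on $E$, so a vector in $E/H$ is orthogonal to $\bar{K}$ exactly when any lift is orthogonal to $K$, giving $\bar{K}^\perp = K^\perp/H$ as subbundles of $E/H$. Consequently
\begin{equation*}
(E/H)_{\rm red} \;=\; \bar{K}^\perp/G \;=\; (K^\perp/H)/G \;=\; K^\perp/(G \times H),
\end{equation*}
while
\begin{equation*}
E_{\rm red}/H \;=\; (K^\perp/G)/H \;=\; K^\perp/(G \times H),
\end{equation*}
giving the candidate canonical isomorphism on total spaces over $M$.

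Finally I would verify that Courant algebroid structures agree. Under either identification, sections of the reduced bundle correspond to $(G \times H)$-invariant sections of $K^\perp \subseteq E$. The bracket and pairing on both sides are the restrictions of the Dorfman bracket and pairing of $E$ to this invariant subspace (closed under the bracket by the computation of Proposition \ref{redisca}), and the anchor in either order is obtained by applying $\rho$ to such a section, producing a $(G \times H)$-invariant vector field on $P$, and then projecting to $TM = TP/(\mathfrak{g} \oplus \mathfrak{h})$. In each case the three pieces of structure depend only on the Courant algebroid structure on $E$ and therefore coincide. The main obstacle is the bookkeeping step in the second paragraph: one must check that the $\xi_{\mathfrak{g}} \in \Omega^1(L,\mathfrak{g}^*)^{\mathfrak{g}}$ piece of $\alpha_{\mathfrak{g}}$ descends to $\Omega^1(L/H, \mathfrak{g}^*)^{\mathfrak{g}}$ and that the Cartan-model equation $d_G(h + \xi_{\mathfrak{g}}) = c$ on $\Omega^*_G(L)$ descends to the analogous equation on $\Omega^*_G(L/H)$, both of which follow from $H$-invariance of the relevant splitting and $3$-form; once this is in hand, the remainder of the argument is formal.
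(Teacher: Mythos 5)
Your proposal is correct and follows essentially the same route as the paper's proof: both identify $E_{\rm red}/H$ and $(E/H)_{\rm red}$ with $K^\perp/(G\times H)$, identify sections of either side with $G\times H$-invariant sections of $K^\perp$, and observe that the bracket, pairing and anchor are inherited from $E$ in the same way. The extra care you take in checking that $\xi_{\mathfrak{g}}$ and the Cartan-model equation descend to $L/H$ is a reasonable elaboration of a step the paper treats as automatic, but it does not change the argument.
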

\begin{proof}
Since the extended action $\alpha_{\mathfrak{g}}$ maps to $H$-invariant sections of $E$ we have that $K$ and $K^\perp$ are preserved by $H$. Then since the actions of $G$ and $H$ commute we get a natural action of $H$ on $E_{{\rm red}} = K^\perp/G$. We can identify sections of $E_{{\rm red}}/H$ with $G \times H$-invariant sections of $K^\perp$.\\

Let $M$ be the vector bundle on $P_0$ obtained by the quotient of $L$ by the action of $H$, so $M = L/H$. It is clear that $M$ is a Lie algebroid on $P_0$ and transitive since we assume $L$ is transitive. Recall that $E$ is given by $E = L \oplus L^*$ with the $h$-twisted Dorfman bracket. Since $h$ is invariant under the action of $H$ it defines a class $h' \in H^3(M)$. Clearly the simple reduction $E/H$ is the Courant algebroid associated to the Lie algebroid $M$ with \v{S}evera class $h' \in H^3(M)$. Now since the extended action $\alpha_{\mathfrak{g}} \colon \mathfrak{g} \to \Gamma(E)$ maps to $H$-invariant sections it automatically defines an extended action $\alpha' \colon \mathfrak{g} \to \Gamma(E/H)$. The image of $\alpha'$ is given by the quotient $K/H$ and $(K/H)^\perp = K^\perp/H$. Thus sections of $(E/H)_{{\rm red}}$ are naturally identified with $G \times H$-invariant sections of $K^\perp$. These identifications give the isomorphism $E_{{\rm red}}/H \simeq (E/H)_{{\rm red}}$.
\end{proof}


\section{Heterotic Courant algebroids}\label{sechetca}


\subsection{Quadratic Lie algebroids}

Let $V \to M$ be a vector bundle equipped with a bundle map $[ \, \, , \, \, ] \colon V \otimes V \to V$ which is skew-symmetric and satisfies the Jacobi identity, so that the fibres of $V$ have the structure of Lie algebras. Following \cite{mac} we say that $V$ is a {\em Lie algebra bundle} if there is a fixed Lie algebra $\mathfrak{g}$, an open cover $\{ U_i \}$ of $M$ and trivialisations $V|_{U_i} \simeq U_i \times \mathfrak{g}$ under which the bracket $[ \, \, , \, \, ]$ is identified with the Lie bracket on $\mathfrak{g}$. If $A$ is any transitive Lie algebroid the kernel $V = {\rm Ker}(\rho)$ of the anchor $\rho \colon A \to TM$ is a Lie algebra bundle \cite[IV, Theorem 1.4]{mac}. Additionally we note that $V$ is a Lie algebroid module over $A$, where the action of $a \in \Gamma(A)$ on a section $v \in \Gamma(V)$ is $[a,v]$.\\

Let $V$ be a bundle of Lie algebras. We say that $V$ is a {\em quadratic Lie algebra bundle} if in addition there is a non-degenerate symmetric bilinear form $\langle \, \, , \, \, \rangle \colon V \otimes V \to \mathbb{R}$ which is ad-invariant in the sense that $\langle [a,b] , c \rangle + \langle b , [a,c] \rangle = 0$, for all $a,b,c \in \Gamma(V)$.

Following \cite{csx} we say that a transitive\footnote{More generally the notion of quadratic Lie algebroids can be defined for regular Lie algebroids, that is whenever the anchor has constant rank.} Lie algebroid $A$ is a {\em quadratic Lie algebroid} if $V = {\rm Ker}(\rho)$ is given a non-degenerate symmetric bilinear pairing $\langle \, \, , \, \, \rangle$ which is preserved by the module structure in the sense that for all $a\in \Gamma(A), b,c \in \Gamma(V)$ we have
\begin{equation}\label{compat}
\rho(a)\langle b , c \rangle = \langle [a,b] , c \rangle + \langle b , [a,c] \rangle.
\end{equation}
Note that by taking $a$ to be a section of $V$ we find that $\langle \, \, , \, \, \rangle$ must be ad-invariant so that $V$ is a quadratic Lie algebra bundle.


\subsection{Atiyah algebroids}

Let $G$ be a Lie group and $P \to M$ a principal $G$-bundle. Throughout we regard principal $G$-bundles as possessing a right action. The quotient of $TP$ by the action of $G$ defines a vector bundle $\mathcal{A} = TP/G$ on $M$ such that sections of $\mathcal{A}$ can be identified with $G$-invariant vector fields on $TP$. The Lie bracket of two invariant vector fields is again invariant, so there is a Lie bracket on the space of sections of $\mathcal{A}$ which makes $\mathcal{A}$ a Lie algebroid, the {\em Atiyah algebroid} of $P$.\\

Let $\mathfrak{g}$ be the Lie algebra of $G$. Throughout this paper we will assume that $G$ is a compact, connected, semisimple Lie group and equip $\mathfrak{g}$ with a G-invariant, non-degenerate bilinear form $c( \, \, , \, \, ) = \langle \, \, , \, \, \rangle$. For any principal $G$-bundle $P \to M$ with Atiyah algebroid $\mathcal{A}$, the kernel $V$ of the anchor $\rho \colon \mathcal{A} \to TM$ identifies with the adjoint bundle $\mathfrak{g}_P = P \times_G \mathfrak{g}$. The pairing $\langle \, \, , \, \, \rangle$ on $\mathfrak{g}$ is $G$-invariant, so defines a natural pairing on $\mathfrak{g}_P$ which we continue to denote by $\langle \, \, , \, \, \rangle$. This pairing makes $\mathcal{A}$ into a quadratic Lie algebroid, indeed let $A,B,C$ be invariant vector fields on $P$ such that $B,C$ are vertical. Then $B,C$ can be regarded as $\mathfrak{g}$-valued functions $B,C \colon P \to \mathfrak{g}$ such that $B(pg) = {\rm Ad}_{g^{-1}}(B(p))$ and similarly for $C$. It follows that $A\langle B , C \rangle = \langle 
A(B) , C \rangle + \langle B , A(C) \rangle$ from which we see that (\ref{compat}) holds.\\

For any $x \in \mathfrak{g}$ we write $\psi(x)$ for the associated vector field on $P$. We have that $\psi(  [x,y] ) = [ \psi(x) , \psi(y) ]$, where we use left invariant vector fields on $G$ to define the Lie bracket on $\mathfrak{g}$. Let $A$ be a connection on $P$. Thus $A \in \Omega^1(P,\mathfrak{g})$ is a $\mathfrak{g}$-valued $1$-form on $P$ such that $i_{\psi(x)} A = x$ for all $x \in \mathfrak{g}$ and $R_g^* A = {\rm Ad}_{g^{-1}} A$, where $R_g : P \to P$ denotes the right action of $g \in G$ on $P$. The curvature of $A$ is the $\mathfrak{g}$-valued $2$-form $F \in \Omega^2(P,\mathfrak{g})$ given by $F = dA + \frac{1}{2}[A,A]$. We can also regard $F$ as a $2$-form valued section of the adjoint bundle.\\

A connection $A$ determines a splitting of the exact sequence $0 \to \mathfrak{g}^P \to \mathcal{A} \to TM \to 0$. For any vector field $X$ on $M$ there is a unique vector field $X^H$ on $P$ such that $X^H$ projects to $X$ and $i_{X^H} A = 0$. We call $X^H$ the horizontal lift of $X$. By invariance of $A$ we have that $X^H$ is an invariant vector field and thus a section of $\mathcal{A}$. This gives the splitting of the exact sequence for $\mathcal{A}$. With respect to this splitting $\mathcal{A} = TM \oplus \mathfrak{g}_P$ one finds that the Lie bracket on sections of $\mathcal{A}$ is given by:
\begin{equation}\label{atiyah}
[ X + s , Y + t ]_{\mathcal{A}} = [X,Y] + \nabla_X t - \nabla_Y s - [s,t] - F(X,Y),
\end{equation}
where $[s,t]$ is defined using the algebraic Lie bracket on $\mathfrak{g}_P$ and $\nabla_X$ is the covariant derivative determined by the connection $A$.


\subsection{Heterotic Courant algebroids}

Let $\mathcal{H}$ be a transitive Courant algebroid on $M$. Then the anchor $\rho \colon \mathcal{H} \to TM$ dualises to an injective map $\rho^* \colon T^*M \to \mathcal{H}$, where we use the pairing on $\mathcal{H}$ to identify $\mathcal{H}$ with its dual. The quotient $\mathcal{A} = \mathcal{H}/T^*M$ inherits the structure of a transitive Lie algebroid. The anchor of $\mathcal{A}$ is induced by the anchor $\rho$ of $\mathcal{H}$ and shall also be denoted as $\rho$. The kernel of $\rho \colon \mathcal{A} \to TM$ naturally inherits a bilinear pairing from the pairing $\langle \, \, , \, \, \rangle$ on $\mathcal{H}$ and we will denote this also by $\langle \, \, , \, \, \rangle$. It is straightforward to see that this pairing makes $\mathcal{A}$ into a quadratic Lie algebroid.

\begin{definition}\label{defhetca}
We say that a transitive Courant algebroid $\mathcal{H}$ is a {\em heterotic Courant algebroid} if there exists a principal $G$-bundle $P$ such that $\mathcal{A} = \mathcal{H}/T^*M$ is isomorphic to the Atiyah algebroid of $P$ as a quadratic Lie algebroid.
\end{definition}

Let $\mathcal{H}$ be a heterotic Courant algebroid associated to the Atiyah algebroid $\mathcal{A} \simeq \mathcal{H}/T^*M$ and let $\mathfrak{g}_P$ denote the adjoint bundle. Define $\mathcal{K}$ to be the kernel of the anchor $\rho \colon \mathcal{H} \to TM$. Then $T^*M \subseteq \mathcal{K}$ and $\mathcal{K}/T^*M$ identifies with the kernel of the induced anchor $\mathcal{A} \to TM$, which is $\mathfrak{g}_P$. We have exact sequences
\begin{eqnarray}
&0  \to \mathcal{K} \to \mathcal{H} \to TM \to 0 \label{exseq1} \\ 
&0  \to T^*M \to \mathcal{K} \to \mathfrak{g}_P \to 0. \label{exseq2}
\end{eqnarray}
By a {\em splitting} of $\mathcal{H}$ we mean a section $s \colon TM \to \mathcal{H}$ of the anchor $\rho$ such that the image $s(TM) \subseteq \mathcal{H}$ is isotropic with respect to the pairing on $\mathcal{H}$. As with the exact case isotropic splittings always exist. Given a splitting $s$ we obtain a splitting of the exact sequence (\ref{exseq1}). Moreover the subspace $s(TM)^\perp \cap \mathcal{K}$ defines a lift of $\mathfrak{g}_P$ to $\mathcal{K}$ which is orthogonal to $T^*M$. Thus a splitting $s$ also determines a splitting of (\ref{exseq2}), giving a decomposition
\begin{equation}\label{equdecomh}
\mathcal{H} = TM \oplus \mathfrak{g}_P \oplus T^*M
\end{equation}
such that the anchor and pairing are given by
\begin{eqnarray}
\rho(X , s , \xi ) \! \! \! \! &=& \! \! \! \! X \label{equanchor} \\
\langle (X,s,\xi) , (Y,t,\eta) \rangle \! \! \! \! &=& \! \! \! \! \tfrac{1}{2}( i_X \eta + i_Y \xi) + \langle s , t \rangle. \label{equpairing}
\end{eqnarray}

In general there is an obstruction for a quadratic Lie algebroid $\mathcal{A}$ associated to a principal $G$-bundle $P \to M$ to arise from a transitive Courant algebroid $\mathcal{H}$ as the quotient $\mathcal{A} = \mathcal{H}/T^*M$. Let $A$ be a connection on $P$  with curvature $2$-form $F$. For suitable choice of $\langle \, \, , \, \, \rangle$, the characteristic class represented by the closed $4$-form $\langle F \wedge F \rangle$ is the first Pontryagin class of $P$. For convenience we will refer to the cohomology class represented by $\langle F \wedge F \rangle$ for any fixed choice of pairing $\langle \, \, , \, \, \rangle$ as the {\em first Pontryagin class} of $P$ (with respect to $\langle \, \, , \, \, \rangle$). It is well known that $\mathcal{A}$ comes from a transitive Courant algebroid $\mathcal{H}$ if and only if the first Pontryagin class vanishes \cite{bre}. In fact we have the following classification:
\begin{proposition}\label{hetca}
Let $P \to M$ be a principal $G$-bundle with Atiyah algebroid $\mathcal{A}$ and $A$ a connection on $P$ with curvature $F$. Let $H^0$ be a $3$-form on $M$ such that $dH^0 = \langle F,F \rangle$. Such a pair $(A,H^0)$ determines a transitive Courant algebroid $\mathcal{H}$ such that $\mathcal{H}/T^*M = \mathcal{A}$. The bundle $\mathcal{H}$ is given by (\ref{equdecomh}) with anchor given by (\ref{equanchor}), pairing by (\ref{equpairing}) and bracket given by:
\begin{equation}\label{bracket}
\begin{aligned}
\left[ X + s + \xi , Y + t + \eta \right]_{\mathcal{H}} &= [X,Y] + \nabla_X t - \nabla_Y s -[s,t] - F(X,Y) \\
& \; \; \; \; \; \; + \mathcal{L}_X \eta - i_Y d\xi + i_Y i_X H^0 \\
& \; \; \; \; \; \;  + 2\langle t , i_XF \rangle - 2\langle s ,i_Y F \rangle + 2\langle \nabla s , t \rangle,
\end{aligned}
\end{equation}
where $X,Y \in \Gamma(TM), s,t \in \Gamma(\mathfrak{g}_P), \xi,\eta \in \Gamma(T^*M)$. Conversely given a transitive Courant algebroid $\mathcal{H}$ and a splitting $s \colon TM \to \mathcal{H}$ there exists $(A,H^0)$ with $dH^0 = \langle F , F \rangle$, such that the bracket is given by (\ref{bracket}).
\end{proposition}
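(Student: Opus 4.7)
The cleanest strategy is to realize $\mathcal{H}$ as a reduction of an exact Courant algebroid on $P$, inheriting the Courant algebroid axioms from the exact case via Proposition~\ref{redisca}. For the forward direction, set $H := \sigma^{*}H^{0} - CS_{3}(A) \in \Omega^{3}(P)$. The identity $dCS_{3}(A) = \langle F \wedge F\rangle$ together with $dH^{0} = \langle F, F\rangle$ forces $dH = 0$, so $H$ defines an exact Courant algebroid $E = TP \oplus T^{*}P$ on $P$ with the $H$-twisted Dorfman bracket.

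Define $\alpha : \mathfrak{g} \to \Gamma(E)$ by $\alpha(x) = \psi(x) - \langle A, x\rangle$, where $\psi(x)$ is the fundamental vector field. Using $i_{\psi(x)}A = x$, $i_{\psi(x)}F = 0$ and $R_{g}^{*}A = \operatorname{Ad}_{g^{-1}}A$, one checks $d\xi(x) = i_{\psi(x)}H$, the equivariance of $\xi$, and $-\langle \alpha(x), \alpha(y)\rangle = \langle x, y\rangle$, equivalently $d_{G}(H + \xi) = c$ with $\xi = -\langle A, \cdot\rangle$. Thus $\alpha$ is a non-degenerate trivially extended action, and Proposition~\ref{redisca} yields a transitive Courant algebroid $E_{\mathrm{red}} = K^{\perp}/G$ on $M$. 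The connection $A$ identifies $G$-invariant sections of $K^{\perp}$ with triples $(X, s, \xi) \in \Gamma(TM \oplus \mathfrak{g}_{P} \oplus T^{*}M)$, the correspondence sending $(X, s, \xi)$ to the invariant section $X^{H} + \psi(s) + \sigma^{*}\xi + \langle A, s\rangle$ of $E$, where $s$ is viewed as an equivariant $\mathfrak{g}$-valued function on $P$. Computing the $H$-twisted bracket of two such lifts and projecting reproduces (\ref{bracket}): the mixed terms $2\langle \nabla s, t\rangle$ and $\pm 2\langle s, i_{Y}F\rangle$ arise from the interaction of the orthogonality correction $\langle A, s\rangle$ with the Lie derivative terms $[X^{H}, \langle A, t\rangle]$ and $[\psi(s), \sigma^{*}\eta]$, together with the Chern--Simons contribution to $i_{Y^{H}} i_{X^{H}}H$.

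For the converse, let $\mathcal{H}$ be a heterotic Courant algebroid and fix a splitting $s_{0} : TM \to \mathcal{H}$. As noted after Definition~\ref{defhetca}, $s_{0}$ together with $s_{0}(TM)^{\perp} \cap \mathcal{K}$ produces the decomposition (\ref{equdecomh}). The horizontal lift $TM \to \mathcal{A}$ induced by $s_{0}$ is a connection $A$ on $P$ with curvature $F$. Define $H^{0} \in \Omega^{3}(M)$ by
\begin{equation*}
H^{0}(X, Y, Z) := 2\langle [s_{0}(X), s_{0}(Y)]_{\mathcal{H}}, s_{0}(Z)\rangle.
\end{equation*}
Decomposing $[s_{0}(X) + t + \xi,\, s_{0}(Y) + u + \eta]_{\mathcal{H}}$ relative to (\ref{equdecomh}) and using the Courant algebroid axiom $\rho(a)\langle b, c\rangle = \langle [a, b], c\rangle + \langle b, [a, c]\rangle$ together with the symmetric-part axiom to pin down the $TM$--$\mathfrak{g}_{P}$ cross terms recovers (\ref{bracket}). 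The Jacobi identity for the triple $(s_{0}(X), s_{0}(Y), s_{0}(Z))$, restricted to its $T^{*}M$-component, then forces $dH^{0} = \langle F, F\rangle$.

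The main technical obstacle is the bookkeeping for the cross terms $2\langle \nabla s, t\rangle$ and $\pm 2\langle s, i_{Y}F\rangle$ in (\ref{bracket}): in the forward direction they must be extracted from the $H$-twisted bracket after including the orthogonality correction $\langle A, s\rangle$, and in the converse direction they must be shown to be uniquely determined by the Courant axioms together with the decomposition (\ref{equdecomh}). Both computations rely on the Bianchi identity $\nabla F = 0$ and, for the Jacobi identity of (\ref{bracket}), on $dH^{0} = \langle F, F\rangle$ to cancel the residual $4$-form terms.
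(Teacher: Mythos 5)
Your proposal is correct, but it does not follow the paper's route: the paper disposes of Proposition \ref{hetca} in one line, as an ``immediate consequence'' of the known classification of transitive Courant algebroids in \cite{bre}, \cite{vai}, \cite{csx}, i.e.\ by direct verification of the axioms for the bracket (\ref{bracket}) and the converse decomposition argument. Your forward direction instead \emph{derives} the Courant structure by reduction: you build the exact Courant algebroid on $P$ with flux $H=\sigma^*H^0-CS_3(A)$, check that $\xi=-\langle A,\cdot\rangle$ is a non-degenerate trivially extended action with $c=\langle\,,\,\rangle$ (all of your intermediate identities check out: $i_{\psi(x)}CS_3(A)=c(x,dA)$ gives $d\xi(x)=i_{\psi(x)}H$, and $-\langle\alpha(x),\alpha(y)\rangle=c(x,y)$), and then transport the reduced bracket through the frame $X+s+\xi\mapsto X^H+s+\langle A,s\rangle+\sigma^*\xi$ of $K^\perp/G$. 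That computation is literally the one the paper carries out later in the proof of Proposition \ref{reduct}; note that the paper's Proposition \ref{reduct} is stated \emph{assuming} Proposition \ref{hetca}, whereas you run it in the opposite direction, so you must (as you do) verify directly that the lift lands in $\Gamma(K^\perp)^G$ and compute the bracket there rather than quoting \ref{reduct} as a black box --- done this way there is no circularity, and Proposition \ref{redisca} legitimately supplies the Courant axioms for free. What your route buys is a self-contained proof that (\ref{bracket}) satisfies the Jacobi identity without the lengthy residual-$4$-form cancellation against $dH^0=\langle F,F\rangle$; what the paper's citation route buys is brevity and, in the converse direction, exactly the argument you sketch (the splitting determines $A$, the formula $H^0(X,Y,Z)=2\langle[s_0X,s_0Y],s_0Z\rangle$ determines the $3$-form, the pairing and symmetric-part axioms pin down the cross terms, and Jacobi forces $dH^0=\langle F,F\rangle$). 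Your converse is therefore the standard classification argument and is fine as a sketch, though a complete write-up should check that $H^0$ is tensorial and fully skew (skewness in the last two slots from $\rho(s_0X)\langle s_0Y,s_0Z\rangle=0$, in the first two from isotropy of the splitting) and that the induced bracket on $\mathcal{H}/T^*M$ reproduces the Atiyah bracket (\ref{atiyah}), which fixes the signs of $-[s,t]$ and $-F(X,Y)$ in (\ref{bracket}).
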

The proof is an immediate consequence of the well known classification of transitive Courant algebroids \cite{bre},\cite{vai},\cite{csx}.\\

Let $\mathcal{H}$ be the bundle $TM \oplus \mathfrak{g}_P \oplus T^*M$. Given a $2$-form $B$ and a $\mathfrak{g}_P$-valued 1-form $A$, we define endomorphisms $e^B : \mathcal{H} \to \mathcal{H}$, $e^A : \mathcal{H} \to \mathcal{H}$ as follows:
\begin{eqnarray*}
e^B( X + s + \xi) &=& X + s + \xi + i_X B \\
e^A(X+s+\xi) &=& X + s-AX + \xi + \langle 2s - AX , A \rangle.
\end{eqnarray*}
We call $e^B$ a {\em $B$-shift} and $e^A$ an {\em $A$-shift}. Note that $A$- and $B$-shifts preserve the anchor (\ref{equanchor}) and pairing (\ref{equpairing}). Suppose that $\nabla$ is a $G$-connection with curvature $F_\nabla$ and $H^0$ is a $3$-form such that $dH^0 = \langle F_\nabla , F_\nabla \rangle$. Then we may equip $\mathcal{H}$ with the corresponding Dorfman bracket given by (\ref{bracket}). We write this as $[ \, \, , \, \, ]_{\nabla,H^0}$ to show the dependence on $(\nabla,H^0)$. In general $A$- and $B$-shifts do not preserve this bracket. However we have the following identities:
\begin{eqnarray}
\left[ e^Bu , e^Bv \right]_{\nabla,H^0} &=& e^B \left[u,v \right]_{\nabla,H^0+dB} \label{equbshift} \\
\left[ e^Au , e^Av \right]_{\nabla,H^0} &=& e^A \left[u,v \right]_{\nabla + A , H^0 + 2\langle A \wedge F_\nabla \rangle + \langle A \wedge d_\nabla A \rangle + \tfrac{1}{3} \langle A \wedge [A \wedge A] \rangle }. \label{equashift}
\end{eqnarray}


\subsection{Heterotic Courant algebroids by reduction}

We will show that heterotic Courant algebroids are obtained by reduction of exact Courant algebroids. Let $\sigma \colon P \to M$ be a principal $G$-bundle. Every exact Courant algebroid on $P$ can be constructed by taking a closed $G$-invariant $3$-form $H \in \Omega^3(P)$ and taking the $H$-twisted Dorfman bracket on $E = TP \oplus T^*P$. By $G$-invariance  of $H$, the action of $G$ on $P$ extends immediately to an action on $E$ which moreover preserves the splitting $E = TP \oplus T^*P$. We are therefore in the situation described in Section 2.2 of \cite{bcg} in which trivially extended actions $\alpha \colon \mathfrak{g} \to \Gamma(E)$ up to equivalence correspond to solutions of $d_G( \Phi ) = c$, where $\Phi = H + \xi$ for $\xi \colon \mathfrak{g} \to \Gamma(T^*M)$ the $T^*M$-component of $\alpha$ regarded as a section of $\left( \mathfrak{g}^* \otimes \Omega^1(P) \right)^G$ and $c \in \left( S^2 \mathfrak{g}^* \otimes \Omega^0(P)  \right)^G$ is the quadratic form on $\mathfrak{g}$ given by $c(x,y) = -\langle \alpha(x) , \alpha(y) \rangle$. It is natural to consider the case where $c = \langle \, \, , \, \, \rangle$.

\begin{proposition}\label{extact}
Equivalence classes of solutions to $d_G( H+\xi ) = c = \langle \, \, , \, \, \rangle$ are represented by pairs $(H^0,A)$, where $H^0$ is a $3$-form on $M$ and $A$ is a connection on $P$ with curvature $F$ such that $dH^0 = \langle F,F \rangle$. The corresponding pair $(H,\xi)$ is given by
\begin{equation*}
\begin{aligned}
H &= \sigma^*(H^0) - CS_3(A) \\
\xi &= -cA.
\end{aligned}
\end{equation*}
Here $c$ is viewed as a map $c \colon \mathfrak{g} \to \mathfrak{g}^*$ and $CS_3(A)$ is the Chern-Simons $3$-form of $A$
\begin{equation*}
CS_3(A) = c(A,F) - \frac{1}{3!}c( A , [A,A] ).
\end{equation*}
\end{proposition}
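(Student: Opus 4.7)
I would first verify the direction that starts from a pair $(H^0,A)$: given a connection $A$ on $P$ with curvature $F$ and a $3$-form $H^0$ on $M$ with $dH^0=\langle F,F\rangle$, set $H=\sigma^*H^0-CS_3(A)$ and $\xi=-\langle A,\,\cdot\,\rangle$, then check $d_G(H+\xi)=c$ componentwise under $\Omega^4_G(P)=\Omega^4(P)\oplus(\Omega^2(P)\otimes\mathfrak{g}^*)\oplus(\Omega^0(P)\otimes S^2\mathfrak{g}^*)^G$. The pure $4$-form component $dH=0$ follows from $dCS_3(A)=\langle F,F\rangle$ combined with $\sigma^*dH^0=\langle F,F\rangle$, using that $\langle F,F\rangle$ is basic ($i_{\psi(e)}F=0$ together with $G$-invariance). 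The $S^2\mathfrak{g}^*$-component reduces to $-i_{\psi(e_a)}\xi(e_b)=\langle e_a,e_b\rangle$, immediate from $i_{\psi(e)}A=e$. The $\mathfrak{g}^*$-component is the identity $d\xi(e)=i_{\psi(e)}H$; expanding the left side via $dA=F-\tfrac{1}{2}[A,A]$ and the right side via the standard formula $i_{\psi(e)}CS_3(A)=\langle dA,e\rangle$ (and $i_{\psi(e)}\sigma^*H^0=0$) shows both sides agree.

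\textbf{Necessity.} For the converse, start with an arbitrary solution $\Phi=H+\xi$ of $d_G\Phi=c$. Use non-degeneracy of $c=\langle\,,\,\rangle$ to reinterpret the equivariant $\mathfrak{g}^*$-valued $1$-form $\xi$ as an equivariant $A'\in\Omega^1(P,\mathfrak{g})$ via $\xi=-\langle A',\,\cdot\,\rangle$. The $G$-equivariance of $A'$ is inherited from that of $\xi$, so the only remaining requirement for $A'$ to be a connection on $P$ is the vertical condition $i_{\psi(e)}A'=e$. The $S^2\mathfrak{g}^*$-component of $d_G\Phi=c$ forces the $(e,e')$-symmetric part of $\langle i_{\psi(e)}A',e'\rangle$ to equal $\langle e,e'\rangle$, leaving an equivariant antisymmetric obstruction $\lambda\colon P\to\Lambda^2\mathfrak{g}^*$ that measures the deviation of $i_{\psi(e)}A'$ from $e$. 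To kill $\lambda$ I would exploit the equivalence $\Phi\sim\Phi+d_GB$ for a $G$-invariant $2$-form $B$ on $P$: this alters $\xi(e)$ by $-i_{\psi(e)}B$, shifting the vertical-vertical contraction by $-B(\psi(e),\psi(e'))$, and a $B$ realizing the prescribed antisymmetric bilinear form on vertical pairs can be manufactured as $B(X,Y)=\lambda(A_0(X),A_0(Y))$ for any auxiliary connection $A_0$ (invariance of $B$ comes from the equivariance of both $A_0$ and $\lambda$). After this shift, $A:=A'$ satisfies $i_{\psi(e)}A=e$ and is a genuine principal connection on $P$.

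\textbf{Extracting $H^0$ and the anomaly.} With $A$ a connection, $CS_3(A)$ is defined, and the $\Omega^2\otimes\mathfrak{g}^*$-component $d\xi(e)=i_{\psi(e)}H$, combined with $i_{\psi(e)}CS_3(A)=\langle dA,e\rangle=-d\xi(e)$, gives $i_{\psi(e)}(H+CS_3(A))=0$ for every $e\in\mathfrak{g}$. Together with $G$-invariance this shows $H+CS_3(A)$ is basic, hence equals $\sigma^*H^0$ for a unique $H^0\in\Omega^3(M)$. Finally the $4$-form equation $dH=0$ yields $\sigma^*dH^0=dCS_3(A)=\langle F,F\rangle$, and since $\langle F,F\rangle$ is basic, injectivity of $\sigma^*$ gives $dH^0=\langle F,F\rangle$ on $M$. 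The part of the argument I expect to demand the most care is the promotion, via a $d_G$-exact $B$-shift, of the initial $\mathfrak{g}$-valued $1$-form extracted from $\xi$ into a bona fide principal connection; once this is achieved the remainder is direct verification using the component decomposition of the Cartan complex and standard Chern-Simons identities.
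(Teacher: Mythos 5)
Your proposal is correct and follows essentially the same route as the paper: convert $\xi$ into an equivariant $\mathfrak{g}$-valued $1$-form via the non-degenerate pairing $c$, observe that the $S^2\mathfrak{g}^*$-component of $d_G\Phi=c$ pins down its vertical contractions up to an antisymmetric discrepancy, and kill that discrepancy by a $d_G$-exact shift manufactured from an auxiliary connection (the paper's $\Phi\mapsto\Phi-d_G(\tfrac{1}{2}\beta_{ij}A_0^iA_0^j)$ is exactly your $B$-shift). The only difference is cosmetic and occurs at the end: the paper expands $H$ in powers of the connection and matches coefficients to force $H=\sigma^*(H^0)-CS_3(A)$, whereas you deduce directly from $d\xi(e)=i_{\psi(e)}H$ and $i_{\psi(e)}CS_3(A)=\langle dA,e\rangle$ that $H+CS_3(A)$ is horizontal and invariant, hence basic — a slightly cleaner packaging of the same computation.
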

\begin{proof}
We let $\psi \colon \mathfrak{g} \to \Gamma(TP)$ denote the map sending an element $x \in \mathfrak{g}$ to the corresponding vector field on $P$. Then an extended action $\alpha \colon \mathfrak{g} \to \Gamma(E)$ has the form $\alpha(x) = ( \psi(x) , \xi(x) )$, where $\xi \colon \mathfrak{g} \to \Gamma(T^*P)$. We can think of $\xi$ as a $\mathfrak{g}^*$-valued $1$-form on $P$ or using $c$ we find that there is a $\mathfrak{g}$-valued $1$-form $A' \in \Omega^1(P,\mathfrak{g})$ such that $\xi = -cA'$. We have that $A'$ must be invariant in the sense that $R_g^* A' = {\rm Ad}_{g^{-1}} A'$. The condition that $c(x,y) = -\langle \alpha(x) , \alpha(y) \rangle$ becomes $2c(x,y) = c( i_{\psi(x)} A', y ) + c( i_{\psi(y)} A' , x )$. To express the general solution let us fix a basis $e_1, \dots , e_m$ for $\mathfrak{g}$, a dual basis $e^1, \dots , e^m$ and set $c_{ij} = c(e_i,e_j)$. Now choose an arbitrary connection $A_0 = A_0^i e_i$ on $P$. Then we can decompose $A'$ as
\begin{equation*}
A' = a^i_j A_0^j e_i + B^i e_i
\end{equation*}
where the $B^i$ satisfy $i_{\psi(x)} B^i = 0$ for all $x \in \mathfrak{g}$. Thus $a_i^r c_{rj} + c_{ri} a^r_j = 2c_{ij}$. The general solution has the form $a_i^r c_{rj} = c_{ij} + \beta_{ij}$, where $\beta_{ij} = -\beta_{ji}$. Substituting we obtain
\begin{equation*}
A' = A_0 + B^i e_i + \beta_{im} c^{mj} A_0^i e_j = A + \beta_{im} c^{mj} A_0^i e_j
\end{equation*}
where $A$ is the connection $A = A_0 + B^i e_i$. From this we find that $\xi = -cA'$ is given by $\xi = -cA - \beta_{ij} A_0^i e^j$. We now argue that the term $\beta_{ij} A_0^i e^j$ can be eliminated. If $\Phi = H + \xi$ is a solution to $d_G (\Phi) = c$, then an equivalent solution is given by $\Phi' = \Phi - d_G( \frac{1}{2} \beta_{ij} A_0^i A_0^j ) = H' + \xi'$ where $H' = H - d( \frac{1}{2} \beta_{ij} A_0^i A_0^j )$ and $\xi' = \xi + \beta_{ij}A_0^i e^j = -cA$. We have shown that modulo equivalence every solution to $d_G( H + \xi ) = c$ is given by an extended action of the form $\xi = -cA$ for some connection $A$.\\

Given a pair $(H,A)$ consisting of a closed invariant $3$-form $H$ and a connection $A$ we must determine when the condition $d_G( H - cA ) = c$ is satisfied. Write $A = A^i e_i$ and observe that $\iota( A^i) = e^i$. Then $d_G( -cA ) = d_G( -c_{ij} A^i e^j) = -c_{ij} dA^i e^j + c_{ij} e^i e^j$, so we have $d_G(H) -c_{ij}dA^i e^j = 0$. To proceed let us decompose $H$ into the most general form
\begin{equation*}
H = H^0 + H^1_i A^i + \frac{1}{2!} H^2_{ij} A^{ij} + \frac{1}{3!} H^3_{ijk} A^{ijk},
\end{equation*}
where $A^{i_1 i_2 \dots i_k} = A^{i_1} \wedge \dots \wedge A^{i_k}$ and we omit pullback notation. Let us introduce structure constants $c_{ij}^k$ such that $[e_i , e_j] = c_{ij}^k e_k$ and set $c_{ijk} = c_{ij}^m c_{mj}$. Then $c_{ijk}$ is skew-symmetric in $i,j,k$. The curvature $F^i e_i$ is given by $F^k = dA^k + \frac{1}{2} c_{ij}^k A^{ij}$. The equation $d_G(H-cA) = c$ takes the form $d_G(H) -c_{ij}F^i e^j + \frac{1}{2} c_{ijk} A^{ij} e^k = 0$. Equating coefficients of $e^i$ we arrive at the conditions $H^3_{ijk} = c_{ijk}$, $H^2_{ij} = 0$, $H^1_i = -c_{ij}F^j$. Therefore $H$ is of the form
\begin{equation*}
H = H^0 -c_{ij} A^i F^j + \frac{1}{3!} c_{ijk} A^{ijk}.
\end{equation*}
We recognise $c_{ij} A^i F^j - \frac{1}{3!} c_{ijk} A^{ijk}$ to be the Chern-Simons $3$-form of $A$, which we denote by $CS_3(A)$. Thus $H = H^0 - CS_3(A)$. Finally the equation $d_G( H -cA) = c$ also requires that $H$ is a closed form. Since $d CS_3(A) = c(F,F)$ this is equivalent to the condition $dH^0 = c(F,F)$. Conversely a pair $(H^0,A)$ where $H^0$ is a $3$-form on $M$ satisfying $dH^0 = c(F,F)$ defines a solution to $d_G( H -cA) = c$ with $H = H^0 - CS_3(A)$.
\end{proof}

Let $\sigma \colon P \to M$ be a principal $G$-bundle and $(H^0,A)$ a pair satisfying the conditions of Proposition \ref{extact} so that $H = \sigma^*(H^0) - CS_3(A)$ is a closed $3$-form on $P$ and we have an extended action $\alpha \colon \mathfrak{g} \to \Gamma(E)$, where $E = TP \oplus T^*P$ with the $H$-twisted Dorfman bracket and $\alpha = (\psi , \xi)$ with $\psi \colon \mathfrak{g} \to TP$ the map sending an element of $\mathfrak{g}$ to the corresponding vector field and $\xi \colon \mathfrak{g} \to \Gamma(T^*P)$ is given by $\xi = -cA$. Since our extended action integrates to an action of $G$ on $E$ we obtain a Courant algebroid by reduction. Let $K \subset E$ be the image of $\alpha$ which in the present situation is a subbundle of $E$ and let $K^\perp$ be the annihilator of $K$ with respect to the pairing on $E$. According the Proposition \ref{redisca} we obtain a Courant algebroid $E_{{\rm red}}$ on $P/G = M$, the reduction of $E$. As a vector bundle $E_{{\rm red}} = K^\perp/G$. Sections of $E_{{\
rm red}}$ can be identified with $G$-invariant sections of $K$ and in this way $E_{{\rm red}}$ inherits the structure of a Courant algebroid on $M$. 

\begin{proposition}\label{reduct}
Given an extended action $\alpha \colon \mathfrak{g} \to \Gamma(E)$ corresponding to a pair $(H^0,A)$ there is an isomorphism of Courant algebroids $f \colon \mathcal{H} \to E_{{\rm red}}$ where $\mathcal{H} = TM \oplus \mathfrak{g}_P \oplus T^*M$ is the heterotic Courant algebroid associated to the pair $(H^0,A)$ as in Proposition \ref{hetca}. In fact we can take $f$ to be as follows:
\begin{equation*}
f( X + s + \xi ) = X^H + s + c(A,s) + \sigma^*(\xi)
\end{equation*}
where $X^H$ is the horizontal lift of $X$ determined by the connection $A$ and $s \in \Omega^0(M , \mathfrak{g}_P)$ is identified with an invariant vertical vector field on $P$.
\end{proposition}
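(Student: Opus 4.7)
The plan is to check that $f$ is a bundle isomorphism onto $E_{\rm red}$ which intertwines the anchors, pairings and brackets.

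First, I would verify that $f$ takes values in $\Gamma(K^\perp)^G$. Each component of $f(X+s+\xi)$ is $G$-invariant: $X^H$ by the connection property, $s$ as vertical vector field by definition of $\mathfrak{g}_P$, $c(A,s)$ by $R_g^*A = {\rm Ad}_{g^{-1}}A$ combined with equivariance of $s$ and ${\rm Ad}$-invariance of $c$, and $\sigma^*\xi$ by $G$-invariance of $\sigma$. Pairing with $\alpha(x) = \psi(x) - c(A,x)$ gives $\tfrac{1}{2}(-c(s,x)+c(x,s)) = 0$ using $A(X^H)=0$, $A(s) = s$, $A(\psi(x)) = x$ and $\sigma^*\xi(\psi(x))=0$. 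Fibrewise injectivity is immediate from reading off horizontal, vertical and cotangent components, and the rank count ${\rm rank}\,\mathcal{H} = 2\dim M + \dim G = 2\dim P - \dim K = {\rm rank}\,K^\perp$ upgrades this to an isomorphism onto $E_{\rm red}$. Anchor preservation is $d\sigma(X^H+s) = X$, and pairing preservation follows since the cross contributions $c(A,s)(Y^H+t) + c(A,t)(X^H+s) = c(s,t) + c(t,s) = 2\langle s,t\rangle$ combined with $\sigma^*\xi(Y^H+t) + \sigma^*\eta(X^H+s) = \xi(Y)+\eta(X)$ reproduce (\ref{equpairing}).

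The main step is bracket preservation. Writing $V_i = X_i^H + s_i$, $\omega_i = c(A,s_i) + \sigma^*\xi_i$, and $H = \sigma^*H^0 - CS_3(A)$, the vector part $[V_1,V_2]_{TP}$ reproduces the Atiyah bracket (\ref{atiyah}) through the standard lift identities $[X_1^H, X_2^H] = [X_1, X_2]^H - \widetilde{F(X_1,X_2)}$, $[X^H, \tilde t] = \widetilde{\nabla_X t}$ and $[\tilde s_1, \tilde s_2] = -\widetilde{[s_1,s_2]_{\mathfrak{g}_P}}$, the final sign reflecting the ${\rm Ad}$-equivariance of sections of $\mathfrak{g}_P$ under the right action. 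For the $1$-form part one expands
\[
\mathcal{L}_{V_1}\omega_2 - i_{V_2}d\omega_1 + i_{V_2}i_{V_1}\bigl(\sigma^*H^0 - CS_3(A)\bigr)
\]
using $dCS_3(A) = c(F,F)$, $\mathcal{L}_{X^H}A = i_{X^H}F$ and $\mathcal{L}_{\tilde s}A = ds - [s,A]$, splitting each contraction into its horizontal and vertical parts. Horizontal-horizontal contractions with $\sigma^*H^0$ produce $\sigma^*(i_{X_2}i_{X_1}H^0)$; the $c(A,F)$ piece of $-CS_3(A)$ produces $-c(A, F(X_1,X_2))$ together with mixed $\langle s_i, i_{X_j}F\rangle$ terms; the cubic $c(A,[A,A])$ piece produces $-c(A,[s_1,s_2])$ and further $\langle s_i, i_{X_j}F\rangle$ contributions; and the $\mathcal{L}/d$-derivatives of $c(A,s)$ and $c(A,t)$ combine with $X^H(t) = \nabla_X t$ to give the $c(A, \nabla_X t - \nabla_Y s)$ and $2\langle \nabla s, t\rangle$ terms of (\ref{bracket}). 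The $\sigma^*\xi_i$ pieces of $\omega_i$ give $\sigma^*(\mathcal{L}_X\eta - i_Y d\xi)$ by naturality, since vertical vectors annihilate pullback forms.

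The main obstacle is the bookkeeping in the $1$-form part: each of $\mathcal{L}_{V_1}\omega_2$, $-i_{V_2}d\omega_1$ and $i_{V_2}i_{V_1}CS_3(A)$ contributes several terms of mixed horizontal/vertical type, and the coefficients $2$ in front of $\langle\cdot, i_\cdot F\rangle$ and $\langle\nabla s, t\rangle$ in (\ref{bracket}) arise from simultaneous contributions of the $c(A,F)$ and $c(A,[A,A])$ pieces of $CS_3(A)$ together with derivatives of $c(A,s)$. Once these matches are tabulated the identity $[f(u_1),f(u_2)]_H = f([u_1,u_2]_\mathcal{H})$ follows, completing the proof that $f$ is a Courant algebroid isomorphism.
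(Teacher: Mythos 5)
Your proposal is correct and follows essentially the same route as the paper: identify sections of $E_{\rm red}$ with $G$-invariant sections of $K^\perp$, check the anchor and pairing directly, and then verify the bracket by expanding the $(\sigma^*H^0 - CS_3(A))$-twisted Dorfman bracket of $X^H + s + c(A,s) + \sigma^*\xi$ and $Y^H + t + c(A,t) + \sigma^*\eta$, matching the vector part against the Atiyah formula and the $1$-form part against (\ref{bracket}). The extra details you supply (invariance of each component, the rank count, the explicit orthogonality to $\alpha(x)$) are points the paper treats as immediate, so nothing is gained or lost.
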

\begin{proof}
Let us take the map $f \colon \mathcal{H} \to E_{{\rm red}}$ to be defined as above. We show that $f$ is an isomorphism of Courant algebroids. First we need to show that $f$ is well defined in the sense that it takes a section $X + s + \xi$ of $\mathcal{H}$ to a section of $E_{{\rm red}}$. Recall first that sections of $E_{{\rm red}}$ naturally identify with $G$-invariant sections of $K^\perp$. This is easily verified by noting that the bundle $K$ is spanned by sections of the form $t - c(A,t)$ where $t \in \Omega^0(M,\mathfrak{g}_P)$. It is trivial to verify that $f$ preserves anchors and pairings, so all that remains is to check that $f$ preserves Dorfman brackets. Let $X+s+\xi$ and $Y+t+\eta$ be sections of $\mathcal{H}$. We compute (omitting pullback notation)
\begin{equation*}
\begin{aligned}
\left[ f(X+s+\xi) , f(Y+t+\eta) \right]_{E} =& \; [ X^H + s +c(A,s) + \xi , Y^H + t +c(A,t) + \eta]_E \\
= & \; [X^H + s , Y^H + t ] + \mathcal{L}_{(X^H + s)}( c(A,t) + \eta) \\
& \; - i_{(Y^H + t)} d( c(A,s) + \xi) \\
& \; + i_{(Y^H + t)} i_{(X^H + s)} ( H^0 - CS_3(A)).
\end{aligned}
\end{equation*}
The term $[X^H + s , Y^H + t ]$ is the commutator of invariant vector fields on $P$, so it is given by the formula (\ref{atiyah}) for the Atiyah algebroid:
\begin{equation*}
[X^H + s , Y^H + t ] = [X,Y]^H + \nabla_X t - \nabla_Y s - [s,t] - F(X,Y).
\end{equation*}
Consider now the expression $i_{(X^H + s)} d( c(A,t) + \eta)$. After some simplification we find
\begin{equation*}
i_{(X^H + s)} d( c(A,t) + \eta) = i_X d\eta -c(s,\nabla t) +c(i_XF,t) -c([s,t] , A) +c(A, \nabla_X t).
\end{equation*}
Likewise we find that
\begin{equation*}
d  i_{(X^H + s)} (c(A,t) + \eta) = di_X \eta + d c(s,t).
\end{equation*}
Finally we also have
\begin{equation*}
\begin{aligned}
i_{(Y^H + t)} i_{(X^H + s)} ( H^0 - CS_3(A)) =&  \, \, i_Y i_X H^0 -c(s,i_YF) +c(t,i_XF) \\
& -c(A,F(X,Y)) +c(A,[s,t]).
\end{aligned}
\end{equation*}
Combining all of these calculations we arrive at
\begin{equation*}
\begin{aligned}
\left[ f(X+s+\xi) , f(Y+t+\eta) \right]_{E} =& [X,Y]^H +\nabla_X t - \nabla_Y s -[s,t] -F(X,Y) \\
& c(\nabla_X t , A) -c(\nabla_Y s , A) -c([s,t],A) \\
& -c(F(X,Y),A) -2c(i_Y F , s) +2c(i_XF,t) \\
& + 2c(\nabla s , t) + \mathcal{L}_X \eta - i_Y d\xi + i_Yi_X H^0.
\end{aligned}
\end{equation*}
The right hand side is $f( [X+s+\xi , Y + t + \eta]_{\mathcal{H}} )$ as required.
\end{proof}

Combining Propositions \ref{hetca} and \ref{reduct} we obtain the following classification of heterotic Courant algebroids:
\begin{proposition}\label{hetred}
Every heterotic Courant algebroid on $M$ is obtained by reduction of an exact Courant algebroid $E = TP \oplus T^*P$ on a principal $G$-bundle $\sigma \colon P \to M$ with flux a closed invariant $3$-form $H \in \Omega^3(P)$. The reduction is by a trivially extended action $\xi \colon \mathfrak{g} \to \Gamma(T^*P)$ of the form $\xi = -cA$, where $A$ is a connection such that $H = \sigma^*(H^0) - CS_3(A)$.
\end{proposition}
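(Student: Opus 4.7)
The statement is essentially a synthesis of Propositions \ref{hetca}, \ref{extact} and \ref{reduct}, so the plan is to chain those three results together, starting from an arbitrary heterotic Courant algebroid and working backwards to the exact Courant algebroid on $P$ whose reduction produces it.

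First I would begin with a heterotic Courant algebroid $\mathcal{H}$ on $M$. By definition, $\mathcal{A} = \mathcal{H}/T^*M$ is isomorphic as a quadratic Lie algebroid to the Atiyah algebroid of some principal $G$-bundle $\sigma \colon P \to M$, which supplies the bundle over which the exact Courant algebroid is going to live. Next, I would choose an isotropic splitting $s \colon TM \to \mathcal{H}$; such splittings exist, as noted in the discussion preceding Proposition \ref{hetca}. Applying the converse direction of Proposition \ref{hetca} to this splitting yields a pair $(A, H^0)$, with $A$ a connection on $P$ with curvature $F$ and $H^0 \in \Omega^3(M)$ satisfying $dH^0 = \langle F , F \rangle$, such that $\mathcal{H}$ decomposes as $TM \oplus \mathfrak{g}_P \oplus T^*M$ with bracket (\ref{bracket}) determined by $(A,H^0)$.

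From the pair $(A,H^0)$, I would form the closed $G$-invariant $3$-form
\begin{equation*}
H = \sigma^*(H^0) - CS_3(A) \in \Omega^3(P),
\end{equation*}
whose closedness is guaranteed by $dCS_3(A) = \langle F,F \rangle$ together with the constraint $dH^0 = \langle F,F\rangle$. Let $E = TP \oplus T^*P$ carry the $H$-twisted Dorfman bracket, so that $E$ is an exact Courant algebroid on $P$. By Proposition \ref{extact}, the data $(H^0, A)$ give rise to a trivially extended action $\alpha = (\psi, \xi)$ on $E$ with $\xi = -cA$ satisfying $d_G(H+\xi) = c$.

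Finally, I would invoke Proposition \ref{reduct}, which directly identifies the reduction $E_{\rm red}$ of $E$ by this extended action with the heterotic Courant algebroid determined by $(A,H^0)$ in Proposition \ref{hetca}. Composing the isomorphism $\mathcal{H} \simeq TM \oplus \mathfrak{g}_P \oplus T^*M$ arising from the chosen splitting with the isomorphism $f$ of Proposition \ref{reduct} gives the desired realisation of $\mathcal{H}$ as the reduction of $E$. There is no substantive obstacle here: all the real work has been done in the preceding propositions, and what remains is essentially bookkeeping — checking that the choice of splitting produces a pair $(A,H^0)$ in the form required to invoke Propositions \ref{extact} and \ref{reduct}. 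Any dependence on the splitting is absorbed by equivalence of extended actions, as discussed at the end of Section \ref{secrbea}, so different splittings of $\mathcal{H}$ give rise to isomorphic exact Courant algebroids and equivalent extended actions on $P$.
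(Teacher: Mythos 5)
Your proposal is correct and follows exactly the route the paper intends: the paper gives no separate argument for this proposition beyond the remark that it follows by combining Propositions \ref{hetca} and \ref{reduct}, and your chain (splitting of $\mathcal{H}$ $\to$ pair $(A,H^0)$ via Proposition \ref{hetca} $\to$ extended action via Proposition \ref{extact} $\to$ identification of the reduction via Proposition \ref{reduct}) is precisely that combination, spelled out. The closing remark about splitting-independence being absorbed by equivalence of extended actions is a correct and welcome extra detail.
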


We saw in Section \ref{secrbea} that equivalent actions give rise to isomorphic reductions, so equivalence classes of extended actions give rise to isomorphic heterotic Courant algebroids.\\

Suppose that $\sigma \colon P \to M$ is a principal $G$-bundle with vanishing first Pontryagin class. Let $\mathcal{EA}(P)$ denote the set of equivalence classes of trivially extended actions on $P$. Here by an extended action we mean a pair $(H,\xi)$ such that $H$ is a closed $G$-invariant $3$-form on $P$ and $\xi$ is an equivariant map $\xi \colon \mathfrak{g} \to \Gamma(T^*P)$ satisfying $d_G( H + \xi ) = c$. Two such pairs $(H,\xi)$ and $(H',\xi')$ are equivalent if there is a degree $2$ class $\beta$ in the Cartan complex such that $H'+\xi' = H + \xi + d_G(\beta)$. Since we are assuming that $P$ has trivial first Pontryagin class we know that $\mathcal{EA}(P)$ is non-empty. Moreover it is clear that the set of pairs $(H,\xi)$ such that $d_G(H+\xi) = c$ is a torsor over the space of $d_G$-closed classes of degree $3$ in the Cartan complex. Factoring out by equivalence we find that $\mathcal{EA}(P)$ is a torsor over $H^3_G(P,\mathbb{R})$, degree $3$ equivariant cohomology of $P$. Since $P$ is a principal 
$G$-bundle we have that $H^3_G(P,\mathbb{R}) = H^3(M,\mathbb{R})$ and we conclude that $\mathcal{EA}(P)$ is a torsor over $H^3(M,\mathbb{R})$.\\

Let $(H,\xi)$ be an extended action so that $d_G(H + \xi) = c$. In particular $H$ is a closed $3$-form on $P$. Changing $(H,\xi)$ to an equivalent extended action changes $H$ by an exact form, so the cohomology class $[H] \in H^3(P,\mathbb{R})$ depends only on the equivalence class of the pair $(H,\xi)$. We know that any such $H$ has the form $H = \sigma^*(H^0) - CS_3(A)$, for some $3$-form $H^0$ on $M$ and a connection $A$ on $P$. The restriction of $H$ to any fibre of $P$ is given by
\begin{equation}\label{cart3}
\omega_3 = \frac{1}{3!} c( [\omega , \omega ] , \omega)
\end{equation}
where $\omega \in \Omega^1(G,\mathfrak{g})$ is the left Maurer-Cartan form on $G$. We call $\omega_3$ the {\em Cartan $3$-form}. Thus $[H] \in H^3(P,\mathbb{R})$ has the property that its restriction to any fibre yields the class $\omega_3 \in H^3(G,\mathbb{R})$.

\begin{definition}\label{defstrcl}
A {\em real string class} is a class $x \in H^3(P,\mathbb{R})$ such that the restriction of $x$ to any fibre of $P$ coincides with $\omega_3$. 
\end{definition}

Since $G$ is taken to be compact, connected and semisimple we have $H^0(G,\mathbb{R}) = \mathbb{R}$, $H^1(G,\mathbb{R}) = H^2(G,\mathbb{R}) = 0$ and $H^3(G,\mathbb{R}) \neq 0$, with $\omega_3$ representing a non-trivial class. Considering the Leray-Serre spectral sequence for $G \to EG \to BG$ with cohomology in real coefficients we see that the existence of a real string class on $P$ is equivalent to the vanishing of the first Pontryagin class in real coefficients. Let us denote by $\mathcal{SC}(P)$ the set of real string classes on $P$.

\begin{proposition}\label{scea}
Suppose $\sigma : P \to M$ is a principal $G$-bundle with vanishing real first Pontryagin class. Then the set $\mathcal{SC}(P)$ is a torsor for $H^3(M,\mathbb{R})$ where the action of $y \in H^3(M,\mathbb{R})$ on $x \in \mathcal{SC}(P)$ is given by $x + \sigma^*(y)$. The map $\mathcal{EA}(P) \to \mathcal{SC}(P)$ which sends a pair $(H,\xi)$ to the cohomology class $[H]$ is an isomorphism of $H^3(M,\mathbb{R})$-torsors.
\end{proposition}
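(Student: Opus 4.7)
The plan is to treat the two claims in turn, first establishing the torsor structure on $\mathcal{SC}(P)$ directly from the Leray--Serre spectral sequence of the fibration $G \to P \to M$ in real cohomology, and then showing that the map $\Theta \colon (H,\xi) \mapsto [H]$ is well-defined, lands in $\mathcal{SC}(P)$, and is $H^3(M,\mathbb{R})$-equivariant, hence automatically bijective.

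For the first claim I would run the spectral sequence with $E_2^{p,q} = H^p(M, H^q(G,\mathbb{R}))$. Since $G$ is compact, connected and semisimple, $H^1(G,\mathbb{R}) = H^2(G,\mathbb{R}) = 0$ and $H^3(G,\mathbb{R}) = \mathbb{R}\langle \omega_3 \rangle$, so the only differential which can detect $\omega_3 \in E_2^{0,3}$ is the transgression $d_4 \colon E_4^{0,3} \to E_4^{4,0} = H^4(M,\mathbb{R})$, which is the real first Pontryagin class up to normalisation. The hypothesis that this class vanishes is then precisely the statement that $\omega_3$ survives to $E_\infty$, giving non-emptiness of $\mathcal{SC}(P)$. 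The vanishing of $E_\infty^{1,2}$ and $E_\infty^{2,1}$ identifies the kernel of the fibre restriction $H^3(P,\mathbb{R}) \to H^3(G,\mathbb{R})$ with the image of $\sigma^*$, and $\sigma^*$ itself is injective since the only possible differentials into $E_*^{3,0}$ emanate from the already-vanishing $E_*^{1,1}$ and $E_*^{0,2}$. Any two string classes differ by an element of this kernel, so $\mathcal{SC}(P)$ is a torsor under the stated $H^3(M,\mathbb{R})$-action.

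For $\Theta$, well-definedness on equivalence classes is immediate: the $3$-form component of $d_G \beta$ for $\beta \in \Omega^2_G(P)$ is $d\beta_0$, so $[H]$ is unchanged. That $[H]$ lies in $\mathcal{SC}(P)$ follows from Proposition \ref{extact}, which writes $H = \sigma^*(H^0) - CS_3(A)$; restriction to a fibre kills $\sigma^*(H^0)$ and reduces $-CS_3(A)$ to the Cartan $3$-form (\ref{cart3}). For equivariance I would use the fact that a free compact Lie group action identifies the Cartan-model equivariant cohomology $H^3_G(P,\mathbb{R})$ with $H^3(M,\mathbb{R})$, and that the forgetful map $H^3_G(P,\mathbb{R}) \to H^3(P,\mathbb{R})$ obtained by setting the generators of $S^* \mathfrak{g}^*$ to zero is identified with $\sigma^*$. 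Concretely, a $d_G$-closed equivariant $3$-form decomposes as $\eta = H_1 + \xi_1$ with $H_1 \in \Omega^3(P)^G$ and $\xi_1 \in \Omega^1(P,\mathfrak{g}^*)^G$, the relation $d_G \eta = 0$ forces $dH_1 = 0$, and the forgetful map sends $[\eta]$ to $[H_1]$. The torsor action of $[\eta]$ on $(H,\xi) \in \mathcal{EA}(P)$ produces $(H+H_1, \xi+\xi_1)$, whose image under $\Theta$ is $[H] + [H_1] = [H] + \sigma^*(y)$ for $y \in H^3(M,\mathbb{R})$ corresponding to $[\eta]$. Thus $\Theta$ intertwines the torsor structures, and a morphism of torsors over a common group is automatically a bijection.

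The main obstacle is pinning down the identification of the forgetful map $H^3_G(P,\mathbb{R}) \to H^3(P,\mathbb{R})$ with $\sigma^*$: one must know that the equivariant cohomology of a free $G$-space computes the ordinary cohomology of the quotient and that this identification is realised at the chain level by the Cartan-model projection $H_1 + \xi_1 \mapsto H_1$. This may be argued topologically via the homotopy equivalence $P_G \simeq M$ arising from contractibility of $EG$, or else by horizontal projection using a connection on $P$; once this is in place the remaining verifications amount to bookkeeping in the Cartan model.
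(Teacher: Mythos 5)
Your proposal is correct and follows essentially the same route as the paper: the torsor structure on $\mathcal{SC}(P)$ via the Leray--Serre spectral sequence of $G \to P \to M$ in real coefficients, and bijectivity of $(H,\xi) \mapsto [H]$ deduced from its equivariance as a map of $H^3(M,\mathbb{R})$-torsors. The paper dismisses both steps as "immediate" and "clear"; your write-up simply supplies the details it omits, in particular the identification of the Cartan-model forgetful map $H^3_G(P,\mathbb{R}) \to H^3(P,\mathbb{R})$ with $\sigma^*$, which is exactly the point needed to justify the equivariance claim.
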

\begin{proof}
The fact that $\mathcal{SC}(P)$ is a torsor over $H^3(M,\mathbb{R})$ is immediate from the Leray-Serre spectral sequence in real coefficients for $\sigma \colon P \to M$. The map which sends a pair $(H,\xi)$ to the class $[H]$ clearly respects the $H^3(M,\mathbb{R})$ actions, so is necessarily a bijection.
\end{proof}

We also need to consider string classes in integral cohomology. For this suppose the pairing $\langle \, \, , \, \, \rangle$ on $\mathfrak{g}$ is such that the Cartan $3$-form $\omega_3$ lies in the image $H^3(G,\mathbb{Z}) \to H^3(G,\mathbb{R})$. Fix a choice of lifting of $\omega_3$ to a class in integral cohomology. For most simple compact connected $G$ this is a trivial matter as $H^3(G,\mathbb{Z}) = \mathbb{Z}$. The exceptions are the groups $SO(4n)/\mathbb{Z}_2$, $n \ge 2$, which have $H^3(G,\mathbb{Z}) = \mathbb{Z} \oplus \mathbb{Z}_2$ \cite{fgk}. 

\begin{definition}\label{stringcl}
For a fixed choice of lift $\omega_3 \in H^3(G,\mathbb{Z})$ we will say that a class $h \in H^3(P,\mathbb{Z})$ on a principal $G$-bundle $P \to X$ is a {\em string class} if $h$ restricts to $\omega_3$ on the fibres of $P$. 
\end{definition}

For the spin groups $G = Spin(m)$, $m \ge 3$, it is shown in \cite{red} that string classes are in bijection with string structures as introduced by Killingback \cite{kil}.


\section{Heterotic T-duality}\label{sechettd}


\subsection{Review of T-duality}

We begin with a brief review of topological T-duality \cite{bem},\cite{bhm1},\cite{bunksch},\cite{brs},\cite{bar0},\cite{bar}. Let $T^n = \mathbb{R}^n/\mathbb{Z}^n$ be a rank $n$ torus, $\hat{T}^n = {\mathbb{R}^n}^*/{\mathbb{Z}^n}^*$ the dual torus. Let $\mathfrak{t}^n = \mathbb{R}^n$ be the Lie algebra of $T^n$ and ${\mathfrak{t}^n}^* = {\mathbb{R}^n}^*$ the Lie algebra of $\hat{T}^n$. Fix a basis $t_1, \dots , t_n$ for $\mathfrak{t}^n$ and dual basis $t^1, \dots , t^n$.\\

Let $\pi \colon X \to M$ be a principal $T^n$-bundle over $M$ and $\hat{\pi} \colon \hat{X} \to M$ a principal $\hat{T}^n$-bundle over $X$. Let $C = X \times_M \hat{X}$ be the fibre product, which is a principal $T^n \times \hat{T}^n$-bundle. Let $p \colon C \to X$, $\hat{p} \colon C \to \hat{X}$ be the projections and set $q = \pi \circ p = \hat{\pi} \circ \hat{p}$. Denote by $\mathcal{C}^*_C$ the sheaf of smooth $\mathbb{C}^*$-valued functions on $C$ and similarly define $\mathcal{C}^*_X, \mathcal{C}^*_{\hat{X}}$. Suppose now that we have classes $h \in H^3(X,\mathbb{Z}) \simeq H^2(X , \mathcal{C}^*_X)$, $\hat{h} \in H^3(\hat{X} , \mathbb{Z} ) \simeq H^2(\hat{X} , \mathcal{C}^*_{\hat{X}})$. There is a natural identification of $H^2(T^n \times \hat{T}^n , \mathbb{Z})$ with $\left(\wedge^2 \mathfrak{t}^n \right) \oplus \left( \mathfrak{t}^n \otimes {\mathfrak{t}^n}^* \right) \oplus \left( \wedge^2 {\mathfrak{t}^n}^* \right)$. Let $\mathcal{P} \in H^2( T^n \times \hat{T}^n , \mathbb{Z})$ be the class 
corresponding to the 
identity in $End(\mathfrak{t}) \simeq \mathfrak{t}^n \otimes {\mathfrak{t}^n}^*$. Then $\mathcal{P}$ may be thought of as a line bundle on $T^n \times \hat{T}^n$ which we call the {\em Poincar\'e line bundle}. Since translations act trivially on $H^2(T^n \times \hat{T}^n , \mathbb{Z})$, we find that $\mathcal{P}$ defines a class in $H^0( M , R^1q_*( \mathcal{C}^*_C))$ which we continue to denote by $\mathcal{P}$. Let $d_2 \colon H^1(M , R^1q_*(\mathcal{C}^*_C)) \to H^2(M , q_*(\mathcal{C}^*_C))$ denote the differential $d_2 \colon E_2^{0,1} \to E^{2,0}_2$ in the Leray-Serre spectral sequence for $q \colon C \to M$ using the sheaf $\mathcal{C}^*_C$. Observe that we have natural pullback maps $p^* \colon \pi_*( \mathcal{C}^*_X) \to q_*(\mathcal{C}^*_C)$, $\hat{p}^* \colon \hat{\pi}_*(\mathcal{C}^*_{\hat{X}}) \to q_*(\mathcal{C}^*_C)$. The following definition is easily shown to be equivalent to the definition of T-duality in \cite{bar}.
\begin{definition}\label{deftd}
The pairs $(X,h),(\hat{X},\hat{h})$ are {\em T-dual} if the following conditions hold:
\begin{itemize}
\item{$h$ is the image of a class $h' \in H^0( M , \pi_*(\mathcal{C}^*_X))$ under the natural map $H^2( M , \pi_*(\mathcal{C}^*_X)) \to H^2(X ,\mathcal{C}^*_X)$.}
\item{Similarly $\hat{h}$ is the image of a class $\hat{h}' \in H^2( M , \hat{\pi}_*(\mathcal{C}^*_{\hat{X}}))$.}
\item{The classes $h',\hat{h}'$ may be chosen such that $\hat{p}^*( \hat{h}') - p^*(h') = d_2 \mathcal{P}$.}
\end{itemize}
In this case we also say that $(\hat{X},\hat{h})$ is a T-dual of $(X,h)$.
\end{definition}

An immediate consequence of this definition is that if $(X,h),(\hat{X},\hat{h})$ are T-dual then $\hat{p}^*(\hat{h}) = p^*(h)$, so that $h,\hat{h}$ coincide on $C$. Let $(X,h)$ be a pair consisting of a principal $T^n$-bundle $\pi \colon X \to M$ and a class $h \in H^2( X , \mathcal{C}^*_X)$. We say that $(X,h)$ is {\em T-dualisable} if there exists a T-dual $(\hat{X},\hat{h})$ of $(X,h)$. It turns out that $(X,h)$ is T-dualisable if and only if $h$ is in the image of the natural map $H^2( M , \pi_*(\mathcal{C}^*_X)) \to H^2(X ,\mathcal{C}^*_X)$.\\

A key aspect of T-duality is that it determines an isomorphism of Courant algebroids. For this suppose that $(X,h),(\hat{X},\hat{h})$ are T-duals over $M$. Let $\theta \in \Omega^1(X , \mathfrak{t}^n)$ be a $T^n$-connection for  $\pi \colon X \to M$ and let $\hat{\theta} \in \Omega^1(\hat{X} , {\mathfrak{t}^n}^*)$ be a $\hat{T}^n$-connection for $\hat{\pi} \colon \hat{X} \to M$. Let $F \in \Omega^2(M , \mathfrak{t}^n)$ be the curvature of $\theta$ and $\hat{F} \in \Omega^2(M , {\mathfrak{t}^n}^*)$ the curvature of $\hat{X}$. Write $\theta = \theta^i t_i$, $F = F^i t_i$ and $\hat{\theta} = \hat{\theta}_i t^i$, $\hat{F} = \hat{F}_i t^i$.
\begin{proposition}[\cite{bar}]\label{proptdt}
For any choice of connections $\theta,\hat{\theta}$ there exists a $3$-form $\overline{H} \in \Omega^3(M)$ such that $d\overline{H} + F^i \wedge F_i = 0$. Define $H \in \Omega^3(X), \hat{H} \in \Omega^3(\hat{X})$ as follows:
\begin{equation*}
\begin{aligned}
H &= \pi^*(\overline{H}) + \pi^*(\hat{F}_i) \wedge \theta^i \\
\hat{H} &= \hat{\pi}^*(\overline{H}) + \hat{\pi}^*(F^i) \wedge \hat{\theta}_i.
\end{aligned}
\end{equation*}
Then $H,\hat{H}$ are closed forms, which by a suitable choice of $\overline{H}$ can be made to represent the cohomology classes $h,\hat{h}$. Moreover it follows that
\begin{equation}\label{equhhhat}
\hat{\pi}^*(\hat{H}) - \pi^*(H) = d ( p^*(\theta^i) \wedge \hat{p}^*(\hat{\theta}_i) ).
\end{equation}
\end{proposition}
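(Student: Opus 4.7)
The plan is to proceed in three steps: establish existence of $\overline{H}$ together with closedness of $H,\hat{H}$; identify their de Rham classes with $h,\hat{h}$; and then verify the transgression identity on $C=X\times_M\hat{X}$ by a direct calculation.

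For the existence of $\overline{H}$, I would unpack the third bullet of Definition \ref{deftd} at the level of de Rham cohomology. Translating $\hat{p}^*(\hat{h}') - p^*(h') = d_2\mathcal{P}$ via the exponential sequence and the Leray--Serre spectral sequence for $q\colon C\to M$, the relation forces the cup product $c_1(X)_i \cup c_1(\hat{X})^i \in H^4(M,\mathbb{R})$ to vanish. Since this class is represented by the form $F^i\wedge \hat{F}_i$, a primitive $\overline{H}$ with $d\overline{H} + F^i\wedge \hat{F}_i = 0$ exists. Closedness of $H$ is then immediate from $d\theta^i = \pi^* F^i$:
\begin{equation*}
dH = \pi^*(d\overline{H}) + \pi^*(\hat{F}_i)\wedge \pi^*(F^i) = \pi^*\bigl(d\overline{H} + F^i\wedge \hat{F}_i\bigr) = 0,
\end{equation*}
and the symmetric computation gives $d\hat{H}=0$.

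To match cohomology classes, I would use that T-dualisability forces $h$ to lie in the image of $H^2(M,\pi_*\mathcal{C}^*_X)\to H^2(X,\mathcal{C}^*_X)$, so $h$ admits a $T^n$-invariant $3$-form representative of the shape $\pi^*(\overline{H}_0) + \pi^*(\alpha_i)\wedge \theta^i$ for some closed $3$-form $\overline{H}_0$ on $M$ and closed $2$-forms $\alpha_i$. The standard identification of the $\theta^i$-component with the first Chern data of the dual bundle forces $\alpha_i = \hat{F}_i$, after which the residual freedom of adjusting $\overline{H}$ by a closed $3$-form on $M$ lets us match $[H]=h$ exactly. The symmetric argument applies to $\hat{H}$, and the T-duality cocycle condition involving $\mathcal{P}$ is precisely what ensures a single common $\overline{H}$ simultaneously realises both $[H]=h$ and $[\hat{H}]=\hat{h}$.

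The identity on $C$ is then purely computational: pulling back, the $q^*\overline{H}$ terms cancel and one is left with
\begin{equation*}
\hat{p}^*\hat{H} - p^*H = q^*(F^i)\wedge \hat{p}^*(\hat{\theta}_i) - p^*(\theta^i)\wedge q^*(\hat{F}_i),
\end{equation*}
after swapping the order of the last wedge using that $\theta^i$ is a $1$-form and $\hat{F}_i$ a $2$-form. On the other hand, $d(p^*\theta^i)=q^*F^i$ and $d(\hat{p}^*\hat{\theta}_i)=q^*\hat{F}_i$, so the graded Leibniz rule (with sign $(-1)^{|\theta^i|}=-1$) produces the same expression for $d\bigl(p^*(\theta^i)\wedge \hat{p}^*(\hat{\theta}_i)\bigr)$. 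The main obstacle I anticipate is the matching step: arranging each of $[H]=h$ and $[\hat{H}]=\hat{h}$ in isolation is routine, but forcing both to hold with a common $\overline{H}$ genuinely invokes the Poincar\'e-bundle condition rather than merely the vanishing of the Chern-class cup product.
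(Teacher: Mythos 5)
The paper does not prove this statement itself --- it is quoted from \cite{bar} --- so I am judging your proposal on its own terms. Your first and third steps are fine: the closedness computation is correct (note the statement's ``$F^i\wedge F_i$'' is a typo for $F^i\wedge\hat F_i$, and you have read it the intended way), and the verification of (\ref{equhhhat}) on $C$, including the signs in the graded Leibniz rule and in commuting the $1$-form past the $2$-form, is exactly right --- \emph{provided} the same $\overline H$ appears in both $H$ and $\hat H$.

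That proviso is where the gap is. You correctly identify the common-$\overline H$ step as ``the main obstacle'', but you then dispose of it with the sentence that the Poincar\'e-bundle condition ``is precisely what ensures'' it. That is an assertion of the conclusion, not an argument, and it is the only part of the proposition with real content: everything else is formal. Concretely, running your Step 2 separately for $h$ and $\hat h$ produces representatives $\pi^*(\overline H_1)+\pi^*(\hat F_i)\wedge\theta^i$ and $\hat\pi^*(\overline H_2)+\hat\pi^*(F^i)\wedge\hat\theta_i$, and then $\hat p^*\hat H - p^*H - d(p^*\theta^i\wedge\hat p^*\hat\theta_i)=q^*(\overline H_2-\overline H_1)$; since $q^*$ is injective on forms you need $\overline H_1=\overline H_2$ on the nose. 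The cohomological T-duality condition only gives $q^*[\overline H_2-\overline H_1]=0$, i.e.\ $[\overline H_2-\overline H_1]$ lies in the kernel of $q^*\colon H^3(M,\mathbb R)\to H^3(C,\mathbb R)$, which by the Gysin sequence for $C\to M$ is the image of cupping $H^1(M)$ with $c$ and $\hat c$. One must then absorb these residual terms $a_i\wedge F^i$, $b^i\wedge\hat F_i$ by shifting the connections ($\hat\theta_i\mapsto\hat\theta_i+a_i$, $\theta^i\mapsto\theta^i+b^i$) and adding exact pieces, and separately handle the $d_2$-indeterminacy in identifying the $\theta^i$-component of $h$ with $\hat c$ when $n\ge 2$ (your ``forces $\alpha_i=\hat F_i$'' also glosses over this). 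This bookkeeping is exactly what Theorem 4.6 of \cite{bar} does, and it is mirrored in the present paper in the proofs of Propositions \ref{tdrelation0} and \ref{tdrelation}, where a $2$-form $B$ with no $\theta^i\wedge\hat\theta_j$-components is constructed and its pieces are redistributed between $H$ and $\hat H$. Until you carry out that step, the proof is incomplete at its essential point.
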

Equation (\ref{equhhhat}) translates the T-duality condition $\hat{p}^*( \hat{h}') - p^*(h') = d_2 \mathcal{P}$ into a statement involving differential forms.\\

Given T-duals $(X,h),(\hat{H},\hat{h})$ and forms $(\theta,\hat{\theta},\overline{H})$ as in Proposition \ref{proptdt} we obtain an isomorphism of Courant algebroids as follows. Let $E = TX \oplus T^*X$ be the exact Courant algebroid on $X$ with $H$-twisted Dorfman bracket and $\hat{E} = T\hat{X} \oplus T^*\hat{X}$ the exact Courant algebroid on $\hat{X}$ with $\hat{H}$-twisted Dorfman bracket. Then since $H,\hat{H}$ are invariant under the torus actions we have that $T^n$ acts on $E$ and $\hat{T}^n$ acts on $\hat{E}$. Using the connections $\theta,\hat{\theta}$ we obtain identifications
\begin{equation}\label{equidents}
\begin{aligned}
E/T^n &= TM \oplus \mathfrak{t}^n \oplus (\mathfrak{t}^n)^* \oplus T^*M \\
\hat{E}/\hat{T}^n &=TM \oplus (\mathfrak{t}^n)^* \oplus \mathfrak{t}^n \oplus T^*M.
\end{aligned}
\end{equation}
Then we define a map $\phi \colon E/T^n \to \hat{E}/\hat{T}^n$ which up to sign is the map swapping the inner two factors:
\begin{equation}\label{equphi}
\phi( X , t , u , \xi ) = (X , -u , -t , \xi ).
\end{equation}
\begin{proposition}[\cite{guacav},\cite{bar}]\label{propphiiso}
The map $\phi \colon E/T^n \to \hat{E}/\hat{T}^n$ defined by Equation (\ref{equphi}) is an isomorphism of Courant algebroids on $M$.
\end{proposition}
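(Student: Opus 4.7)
The plan is to verify that $\phi$ covers the identity on $M$ and preserves the three pieces of Courant algebroid structure: anchor, pairing, and Dorfman bracket. The first two are essentially formal; the content of the proposition sits in the bracket calculation, where (\ref{equhhhat}) does the real work.

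First, I would unpack the decompositions (\ref{equidents}) explicitly. Using $\theta$, a $T^n$-invariant section of $E$ corresponding to $(X,t,u,\xi)\in TM\oplus\mathfrak{t}^n\oplus(\mathfrak{t}^n)^*\oplus T^*M$ is
\[
X^H + t^V + \pi^*\xi + \langle u,\theta\rangle,
\]
where $X^H$ is the $\theta$-horizontal lift of $X$, $t^V$ is the fundamental vector field generated by $t$, and $\langle u,\theta\rangle = u_i\theta^i$. An analogous description applies on $\hat{E}/\hat{T}^n$ using $\hat\theta$. In these coordinates the anchor is projection to $TM$, so anchor preservation under $\phi(X,t,u,\xi)=(X,-u,-t,\xi)$ is immediate. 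A direct computation gives the pairing as
\[
\tfrac12\bigl(i_X\eta+i_Y\xi\bigr)+\tfrac12\bigl(\langle u,s\rangle+\langle v,t\rangle\bigr),
\]
and the double sign change in the middle two slots of $\phi$ preserves this expression.

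For the bracket, I would substitute the invariant section above into the $H$-twisted Dorfman bracket (\ref{htwist}), using the explicit form $H = \pi^*\overline{H} + \pi^*\hat{F}_i\wedge\theta^i$ from Proposition \ref{proptdt}, the structural identities $d\theta^i = \pi^*F^i$, invariance of the individual pieces, and the abelian nature of $T^n$. This yields an expression whose four components sit in $TM$, $\mathfrak{t}^n$, $(\mathfrak{t}^n)^*$, $T^*M$; the $\mathfrak{t}^n$ and $(\mathfrak{t}^n)^*$ components pick up contractions with $F^i$ and $\hat{F}_i$ coming from $d\theta^i$ and from $i_Y i_X H$ respectively. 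Performing the same calculation on $\hat{E}/\hat{T}^n$ with $\hat{H} = \hat{\pi}^*\overline{H} + \hat{\pi}^*F^i\wedge\hat{\theta}_i$ gives a formally identical expression with the roles of $F^i$ and $\hat{F}_i$, and of $\mathfrak{t}^n$ and $(\mathfrak{t}^n)^*$, interchanged. The T-duality relation (\ref{equhhhat}) is precisely the statement that the cross terms line up after this interchange; the minus signs in $\phi$ are exactly what is needed to absorb the sign in $\hat{H}-H = d\langle\theta\wedge\hat{\theta}\rangle$.

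The main obstacle is bookkeeping rather than concept. The Dorfman bracket produces many terms from the $\mathcal{L}_X\eta$, $i_Y d\xi$, and $i_Y i_X H$ pieces, and each of the four slots of (\ref{equidents}) contributes. The essential conceptual input is the identity (\ref{equhhhat}), which is the differential-form incarnation of T-duality; once it is used, the comparison of the two brackets is purely mechanical. No new ideas beyond careful tracking of $d\theta^i=\pi^*F^i$ and its dual analogue are required.
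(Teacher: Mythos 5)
Your proposal is correct and coincides with the standard argument: the paper itself does not prove Proposition \ref{propphiiso} but defers it to \cite{guacav} and \cite{bar}, where the proof is exactly this direct verification --- writing invariant sections as $X^H + t^V + \langle u,\theta\rangle + \pi^*\xi$, checking anchor and pairing formally, and matching the two twisted Dorfman brackets using the specific representatives $H = \pi^*\overline{H} + \pi^*\hat{F}_i\wedge\theta^i$, $\hat{H} = \hat{\pi}^*\overline{H} + \hat{\pi}^*F^i\wedge\hat{\theta}_i$ of Proposition \ref{proptdt}. The only point worth stressing is that the bracket comparison rests on these explicit normal forms for $H$ and $\hat{H}$ (of which (\ref{equhhhat}) is a consequence) rather than on (\ref{equhhhat}) alone, but you invoke the normal forms, so the argument goes through.
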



\subsection{Overview of heterotic T-duality}\label{secover}

Since ordinary T-duality involves an isomorphism of exact Courant algebroids, a natural approach to developing heterotic T-duality is to construct in a similar manner an isomorphism between heterotic Courant algebroids. We have established that heterotic Courant algebroids are obtained from exact Courant algebroids by reduction. Instead of performing the reductions first and then constructing a T-duality isomorphism of heterotic Courant algebroids, we will reverse the process, using ordinary T-duality to construct an isomorphism of exact Courant algebroids and then use reductions to obtain an isomorphism of heterotic Courant algebroids.\\

Let $\sigma \colon P \to X$ be a principal $G$-bundle over $X$ and $h \in H^3(P,\mathbb{R})$ be a real string class. Let $\mathcal{H}$ be the heterotic Courant algebroid on $X$ associated to $h$. Suppose now that $X$ is a principal $T^n$-bundle $\pi_0 \colon X \to M$ over a manifold $M$, where $T^n$ is the rank $n$ torus $T^n = \mathbb{R}^n / \mathbb{Z}^n$. We seek to promote the torus action on $X$ to an action on $\mathcal{H}$. For this we assume that the $T^n$-action on $X$ lifts to a $T^n$-action on $P$ by principal bundle automorphisms. Thus the $T^n$ and $G$-actions on $P$ commute and we can view $P$ as a principal $T^n \times G$-bundle over $M$. Set $P_0 = P/T^n$, so that $P_0$ is a principal $G$-bundle over $M$. Define $\pi,\sigma_0$ to be the projections $\pi \colon P \to P_0$ and $\sigma_0 \colon P_0 \to M$. Recall the commutative diagram (\ref{comdiag}). Let $E$ be the exact Courant algebroid on $P$ with \v{S}evera class $h$. In order to lift $T^n$ to an action on $\mathcal{H}$ we will consider 
lifting 
$T^n$ to an action on $E$ preserving an extended action. Choose a representative $H \in \Omega^3(P)$ for $h$ invariant under $G \times T^n$. This is certainly possible as $G \times T^n$ is compact. We can take $E = TP \oplus T^*P$ with the $H$-twisted Dorfman bracket, then since $H$ is invariant we obtain an action of $G \times T^n$ on $E$. In order to apply the results of Section \ref{seccomred} we need to be able to choose an extended action $\alpha \colon \mathfrak{g} \to \Gamma(E)$ which maps to $T^n$-invariant sections. This will be made clear in Section \ref{sectdcwr}.\\

To implement T-duality we must assume the \v{S}evera class $h \in H^3(P,\mathbb{R})$ lifts to an integral cohomology class $h \in H^3(P,\mathbb{Z})$, thus $h$ is an integral string class on $P$. Since $P \to P_0$ is a principal $T^n$-bundle, we may consider T-dualising the pair $(P,h)$ to obtain a $\hat{T}^n$-bundle $\hat{\pi} \colon \hat{P} \to P_0$ and class $\hat{h} \in H^3(P,\mathbb{Z})$. The existence of such a T-dual requires the vanishing of an obstruction derived from $h$. If $\hat{h}$ is a real string class on $\hat{P}$ then we can construct a corresponding T-dual heterotic Courant algebroid. There is a problem as it can happen that the $G$-action on $P_0$ does not lift to an action on $\hat{P}$ commuting with the $\hat{T}^n$-action. In Section \ref{sectdsc} we will see that there is a cohomological obstruction in finding such a lift of the $G$-action. We will show that when the obstruction vanishes, we have a $G \times \hat{T}^n$-action on $\hat{P}$ and on setting $\hat{X} = \hat{P}/G$ we get a 
commutative diagram
\begin{equation*}
\xymatrix{
P \ar[r]^{\pi} \ar[d]^{\sigma} & P_0 \ar[d]^{\sigma_0} & \hat{P} \ar[l]_{\hat{\pi}} \ar[d]^{\hat{\sigma}} \\
X \ar[r]^{\pi_0} & M & \hat{X} \ar[l]_{\hat{\pi}_0}
}
\end{equation*}
Let $C = P \times_{P_0} \hat{P}$ be the fibre product and $p \colon C \to P$, $\hat{p} \colon C \to \hat{P}$ the projections. One of the requirements of T-duality is that $\hat{p}^*(\hat{h}) = p^*(h)$. We will use this to show that $\hat{h}$ is a string class. In this way we obtain a T-duality for string classes. 

Let $\mathcal{H}$, $\hat{\mathcal{H}}$ be the heterotic Courant algebroids on $X,\hat{X}$ corresponding to the string classes $h,\hat{h}$. The existence of torus actions on $P$ and $\hat{P}$ determines torus actions on $\mathcal{H}$ and $\hat{\mathcal{H}}$. We will proceed to show that there is an isomorphism $\mathcal{H}/T^n \simeq \hat{\mathcal{H}}/\hat{T}^n$. Let $E,\hat{E}$ be the exact Courant algebroids on $P,\hat{P}$ corresponding to $h,\hat{h}$. By T-duality of $(P,h)$ and $(\hat{P},\hat{h})$ there is an isomorphism $\phi \colon E/T^n \to \hat{E}/\hat{T}^n$. Now if we can construct the isomorphism $\phi$ in such a way that it interchanges the extended actions on $E$ and $\hat{E}$ then it will induce an isomorphism $(E/T^n)_{{\rm red}} \simeq (\hat{E}/\hat{T}^n)_{{\rm red}}$. From this and Proposition \ref{commutingred} we have a series of isomorphisms
\begin{equation*}
\mathcal{H}/ T^n \simeq (E/T^n)_{{\rm red}} \simeq (\hat{E}/\hat{T}^n)_{{\rm red}} \simeq \hat{\mathcal{H}} / \hat{T}^n
\end{equation*}
which will establish the desired isomorphism $\mathcal{H}/T^n \simeq \hat{\mathcal{H}}/\hat{T}^n$.


\subsection{T-duality of string classes}\label{sectdsc}

As described in Section \ref{secover} the starting point for heterotic T-duality is a principal $G \times T^n$-bundle $P$ over $M$. Recall again the commutative diagram (\ref{comdiag}), noting that $\pi \colon P \to P_0$ is a principal $T^n$-bundle. Let $h \in H^3(P,\mathbb{Z})$ be a string class. Suppose that $h$ is T-dualisable and let $(\hat{P},\hat{h})$ be a T-dual. Thus $\hat{P} \to P_0$ is a principal $\hat{T}^n$-bundle, where $\hat{T}^n$ is the dual torus to $T^n$ and $\hat{h} \in H^3(\hat{P},\mathbb{Z})$. We are interested in the case that $\hat{P} \to P_0$ is the pullback of a principal $\hat{T}^n$-bundle on $M$. In general there is an obstruction to doing so.
\begin{definition}\label{deftdsc}
Let $P \to M$ be a principal $G \times T^n$-bundle and $h \in H^3(P,\mathbb{Z})$. Set $P_0 = P/T^n$. We say that $h$ is a {\em T-dualisable string class} if
\begin{itemize}
\item{$h$ is T-dualisable with respect to the $T^n$-bundle $P \to P_0$}
\item{The restriction of $h$ to any fibre $G \times T^n$ of $P$ agrees with the class $\omega_3 \in H^3(G,\mathbb{Z}) \subset H^3(G \times T^n , \mathbb{Z})$.}
\end{itemize}
\end{definition}
If $h$ is T-dualisable then the restriction $h|_{G \times T^n}$ of $h$ to a fibre $G \times T^n$ must be a T-dualisable class on the trivial $T^n$-bundle $G \times T^n \to G$. This means that $h|_{G \times T^n}$ must lie in the subgroup $H^3(G,\mathbb{Z}) \oplus H^2(G, H^1(T^n,\mathbb{Z}))$ of $H^3(G \times T^n , \mathbb{Z})$. If $h$ is also a string class then the component of $h$ along the fibre $G$ equals the class $\omega_3$. Thus the obstruction for a string class which is T-dualisable to satisfy Definition \ref{deftdsc} is the component of $h|_{G \times T^n}$ which lies in $H^2(G, H^1(T^n,\mathbb{Z}))$. 
\begin{proposition}\label{proptopcond}
Let $h \in H^3(P,\mathbb{Z})$ be a string class which is T-dualisable with respect to the $T^n$-bundle $P \to P_0$. Restriction of $h$ to the $G \times T^n$-fibres of $P \to M$ defines a class $\kappa \in H^2(G, H^1(T^n,\mathbb{Z}))$. Then $\kappa = 0$ if and only if for every T-dual $(\hat{P},\hat{h})$ of $(P,h)$ over $P_0$ the $\hat{T}^n$-bundle $\hat{P} \to P_0$ is the pullback under $\sigma_0 \colon P_0 \to M$ of a $\hat{T}^n$-bundle $\hat{X} \to M$. Moreover in the case that $\kappa = 0$ the bundle $\hat{X}$ such that $\hat{P} = \sigma_0^*(\hat{X})$ is unique and $\hat{h}$ is a T-dualisable string class.
\end{proposition}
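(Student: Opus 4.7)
The plan is to reduce both directions of the biconditional to a Chern class calculation on $P_0$ and then read off $\kappa$ by restricting to a $G$-fibre. Writing $\hat\Lambda=H^1(T^n,\mathbb{Z})$, the $\hat T^n$-bundle $\hat P\to P_0$ is classified by $c_1(\hat P)\in H^2(P_0,\hat\Lambda)$, and $\hat P$ being pulled back from some $\hat X\to M$ is equivalent to $c_1(\hat P)$ lying in the image of $\sigma_0^{\,*}\colon H^2(M,\hat\Lambda)\to H^2(P_0,\hat\Lambda)$. To analyse this I would run the Leray--Serre spectral sequence for $\sigma_0$ with coefficients $\hat\Lambda$. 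Since $G$ is compact, connected and semisimple, $H^1(G,\mathbb{Z})=0$, so $E_2^{0,1}=E_2^{1,1}=0$. This gives injectivity of $\sigma_0^{\,*}$ (hence uniqueness of $\hat X$ when it exists) and shows that $c_1(\hat P)\in\mathrm{Im}(\sigma_0^{\,*})$ if and only if the image of $c_1(\hat P)$ in $E_\infty^{0,2}\subseteq H^0(M,H^2(G,\hat\Lambda))\cong H^2(G,\hat\Lambda)$ vanishes, that is, if and only if $c_1(\hat P)|_{G_m}=0$ for one (equivalently every) $G$-fibre $G_m\subset P_0$.

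The next step is to identify this fibre restriction with $\kappa$. Because $P=X\times_M P_0$, the preimage $\pi^{-1}(G_m)\subseteq P$ is the trivial bundle $G\times T^n\to G$, and the T-duality of $(P,h)$ with $(\hat P,\hat h)$ restricts to a T-duality between $(G\times T^n,\,h|_{G\times T^n})$ and $(\hat P|_{G_m},\,\hat h|_{\hat P|_{G_m}})$ over $G$. This can be checked either from Definition \ref{deftd} using naturality of the Leray pushforward, or from Proposition \ref{proptdt} by restricting the connections $\theta,\hat\theta$ and the background $3$-form $\overline H$ to the fibre. Under this fibrewise duality, the Chern-class exchange identifies $c_1(\hat P|_{G_m})$ with the K\"unneth component of $h|_{G\times T^n}$ in $H^2(G)\otimes H^1(T^n)$, which is precisely $\kappa$. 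Combined with the previous paragraph this yields both $\kappa=0\iff\hat P$ is a pullback and the uniqueness of $\hat X$.

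Assuming $\kappa=0$, I would finally verify that $\hat h$ is a T-dualisable string class. T-dualisability is automatic by the symmetry of Definition \ref{deftd}. For the string condition, K\"unneth-decompose $\hat h|_{G\times\hat T^n}$: the $H^1(G)\otimes H^2(\hat T^n)$ piece vanishes since $H^1(G,\mathbb{Z})=0$; the $H^3(\hat T^n)$ piece vanishes by T-dualisability of $\hat h$; the $H^2(G)\otimes H^1(\hat T^n)$ piece equals $c_1(P|_m\to G)=0$ by the symmetric version of the previous step applied to the trivial bundle $P|_m=G\times T^n$; and the $H^3(G)$ piece agrees with that of $h|_{G\times T^n}$, namely $\omega_3$.

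The main obstacle is the second step: verifying that the global T-duality over $P_0$ really does restrict to an honest T-duality over a $G$-fibre of $\sigma_0$, and that under this restriction the fibre component of $c_1(\hat P)$ matches $\kappa$ precisely. A clean execution requires a careful passage through either the Leray--Serre spectral sequence for $\pi\colon P\to P_0$ (computing the pushforward component of $h$ and restricting it along $G_m$) or the explicit differential-form formulae of Proposition \ref{proptdt}; once this identification is pinned down, the remaining steps reduce to formal spectral-sequence and K\"unneth manipulations.
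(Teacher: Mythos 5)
Your proposal is correct and follows essentially the same route as the paper: restrict the T-duality over $P_0$ to a $G$-fibre to identify $\kappa$ with the Chern class of $\hat{P}|_G \to G$, use the Leray--Serre spectral sequence for $\sigma_0\colon P_0\to M$ with coefficients $H^1(T^n,\mathbb{Z})$ together with $H^1(G,\mathbb{Z})=0$ to get existence and uniqueness of $\hat{X}$, and then use the fibrewise duality again to verify that $\hat{h}$ is a T-dualisable string class. The extra care you flag around the fibre restriction step is warranted but fills in detail the paper leaves implicit rather than changing the argument.
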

\begin{proof}
Let $(\hat{P},\hat{h})$ be a T-dual. Restricting the base $P_0$ to a fibre $G$ we get a pair $(\hat{P}|_G , \hat{h}|_G)$ which is a T-dual to the restriction of $(P,h)$ over $G$. In particular this means that $\kappa \in H^2(G,H^1(T^n,\mathbb{Z}))$ is the Chern class of $\hat{P}|_G \to G$. If $\kappa \neq 0$ then any $\hat{P}|_G \to G$ is a non-trivial bundle and $\hat{P}$ can not be the pullback of some bundle $\hat{X} \to M$.\\

Conversely assume $\kappa = 0$. Let $(\hat{P},\hat{h})$ be a T-dual over $P_0$. Restricting $\hat{P}$ over $G$ yields a trivial bundle. By considering the Leray-Serre spectral sequence for $P_0 \to M$ with coefficients in $H^1(T^n,\mathbb{Z})$ we see that the Chern class $\hat{c}$ of $\hat{P}$ must be a pullback from $M$, hence $\hat{P}$ itself is a pullback. From the spectral sequence and the fact that $H^1(G,H^1(T^n,\mathbb{Z})) = 0$ we get that $\hat{X}$ is unique. Finally since $\kappa = 0$ we see that the restriction of $h$ to $G \times T^n$ is equal to the pullback of $\omega_3$ under the projection $G \times T^n \to G$. By T-duality we similarly have that the restriction of $\hat{h}$ to a fibre $G \times \hat{T}^n$ equals the pullback of $\omega_3$ under the projection $G \times \hat{T}^n \to G$, hence $\hat{h}$ is a T-dualisable string class.
\end{proof}

Let $\sigma \colon P \to X$ be a principal $G$-bundle. Let $A$ be a connection on $P$ with curvature $F$. The $4$-form $c(F \wedge F)$ defines a characteristic class $p_1 \in H^4(BG,\mathbb{R})$ which we refer to as the first Pontryagin class. Given a faithful representation $V$ of $G$ we can normalise $k$ so that $p_1$ is the usual first Pontryagin class of the associated bundle $P \times_G V$ but it is not necessary for us to do so. We take $\omega_3 \in H^3(G,\mathbb{R})$ to be the transgression of $p_1$, which we recall is given by the Cartan $3$-form (\ref{cart3}).\\

Recall that $\hat{T}^n$ is defined to be the dual torus to $T^n$. One way to formalise this is to take $\hat{T}^n = H^1( T^n , \mathbb{R} ) / H^1( \hat{T}^n , \mathbb{Z})$. In particular this determines a dual pairing between the lattices $H^1(T^n , \mathbb{Z} )$ and $H^1( \hat{T}^n , \mathbb{Z})$, which we denote by $\langle \, \, , \, \, \rangle$.

\begin{proposition}\label{topcond}
Suppose that $h \in H^3(P,\mathbb{Z})$ is a T-dualisable string class. Let $(\hat{P},\hat{h})$ be any T-dual and let $\hat{\pi}_0 \colon \hat{X} \to M$ be the $\hat{T}^n$-bundle over $M$ such that $\hat{P} \to P_0$ is the pullback of $X$ under $\sigma_0 \colon P_0 \to M$. Let $c \in H^2(M , H^1(\hat{T}^n,\mathbb{Z}))$ and $\hat{c} \in H^2(M , H^1(T^n,\mathbb{Z}))$ be the Chern classes of $X \to M$ and $\hat{X} \to M$. The following holds in $H^4(M,\mathbb{R})$:
\begin{equation*}
-p_1(P_0) + \langle c , \hat{c} \rangle = 0.
\end{equation*}
\end{proposition}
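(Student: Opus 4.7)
The plan is to construct a 3-form $\omega \in \Omega^3(M)$ with $d\omega = \langle F_{A_0} \wedge F_{A_0}\rangle - \langle F_{\theta_X} \wedge F_{\hat\theta_{\hat X}}\rangle$, where $A_0$, $\theta_X$ and $\hat\theta_{\hat X}$ are connections on $P_0 \to M$, $X \to M$ and $\hat X \to M$ with curvatures representing $p_1(P_0)$, $c$ and $\hat c$. This immediately gives $-p_1(P_0) + \langle c, \hat c\rangle = 0$ in $H^4(M,\mathbb{R})$. The form $\omega$ will be obtained by descending a suitable 3-form on $P_0$ to $M$.

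Using the pullback structures $P = X \times_M P_0$ and $\hat P = \hat X \times_M P_0$, choose compatible connections $A = \pi^*A_0$ on $P \to X$, $\theta = \sigma^*\theta_X$ on $P \to P_0$, and $\hat\theta = \hat\sigma^*\hat\theta_{\hat X}$ on $\hat P \to P_0$. Then $F_\theta = \sigma_0^* F_{\theta_X}$ and $F_{\hat\theta} = \sigma_0^* F_{\hat\theta_{\hat X}}$ on $P_0$. By Proposition \ref{proptdt} applied to the $T^n$-bundle $P \to P_0$ there is $\bar H \in \Omega^3(P_0)$ with
$$d\bar H = -\sigma_0^*\langle F_{\theta_X} \wedge F_{\hat\theta_{\hat X}}\rangle$$
such that $H := \pi^*\bar H + \pi^*F_{\hat\theta,i}\wedge\theta^i$ represents $h$. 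Combined with the Chern--Weil identity $dCS_3(A_0) = \sigma_0^*\langle F_{A_0}\wedge F_{A_0}\rangle$, the 3-form $\Psi := CS_3(A_0) + \bar H \in \Omega^3(P_0)$ satisfies
$$d\Psi = \sigma_0^*\bigl(\langle F_{A_0}\wedge F_{A_0}\rangle - \langle F_{\theta_X}\wedge F_{\hat\theta_{\hat X}}\rangle\bigr).$$

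The heart of the argument is to show $[\Psi] \in H^3(P_0,\mathbb{R})$ lies in the image of $\sigma_0^*$. By the Leray--Serre spectral sequence for the $G$-bundle $P_0 \to M$, using $H^1(G) = H^2(G) = 0$ for compact semisimple $G$, this image coincides with the kernel of the edge restriction map to a $G$-fibre, so it suffices to show $[\Psi|_G] = 0$ in $H^3(G,\mathbb{R})$. We have $[CS_3(A_0)|_G] = -\omega_3$ since $A_0$ restricts to the Maurer--Cartan form. For $\bar H$, restrict $H$ to a $G \times T^n$-fibre of $P \to M$: since $F_{\hat\theta} = \sigma_0^*F_{\hat\theta_{\hat X}}$ vanishes on a $G$-fibre of $P_0 \to M$, the second term in $H$ drops out and $H|_{G\times T^n} = \pi_G^*(\bar H|_G)$ where $\pi_G \colon G \times T^n \to G$ is projection. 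The T-dualisable string class hypothesis forces $[H|_{G\times T^n}] = \omega_3$ inside $H^3(G\times T^n) = H^3(G) \oplus H^3(T^n)$ (the $H^3(T^n)$-component is killed by T-dualisability, the $H^3(G)$-component equals $\omega_3$ by the string class property), so injectivity of $\pi_G^*$ gives $[\bar H|_G] = \omega_3$. Hence $[\Psi|_G] = 0$.

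Therefore $\Psi = \sigma_0^*\omega + d\gamma$ for some $\omega \in \Omega^3(M)$ and $\gamma \in \Omega^2(P_0)$, and applying $d$ yields
$$\sigma_0^*\bigl(\langle F_{A_0}\wedge F_{A_0}\rangle - \langle F_{\theta_X}\wedge F_{\hat\theta_{\hat X}}\rangle - d\omega\bigr) = 0.$$
Since $\sigma_0$ is a surjective submersion, $\sigma_0^*$ is injective on differential forms, yielding the desired identity on $M$. The main obstacle is the identification $[\bar H|_G] = \omega_3$: this is precisely where the string class hypothesis combines with T-dualisability to transfer information from the Pontryagin class of $P_0 \to M$ to the Chern classes of the torus bundles, and it relies crucially on the commuting $G$- and $T^n$-actions making the whole $G\times T^n$-fibre available for analysis.
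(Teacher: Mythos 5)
Your argument is correct in substance but follows a genuinely different route from the paper's. The paper works with the representative $H=\sigma^*(H^0)-CS_3(A)$, expands $H^0$ component by component with respect to a connection $\theta$ on $X\to M$, uses T-dualisability to kill the $\theta^{ij}$- and $\theta^{ijk}$-components, identifies the coefficient $K^1_i$ with $\hat{c}_i$ up to the image of the $d_2$-differential (and checks that this indeterminacy pairs to zero against $c$), and then reads the identity off from $dH=0$. You instead use the \emph{other} natural representative of $h$, namely the one adapted to T-duality of $P\to P_0$ coming from Proposition \ref{proptdt}, form $\Psi=CS_3(A_0)+\overline{H}$ on $P_0$, and reduce everything to the single computation $[\Psi|_G]=[CS_3(A_0)|_G]+[\overline{H}|_G]=-\omega_3+\omega_3=0$, with the conclusion delivered by the transgression for $P_0\to M$. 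This is cleaner: it avoids the component bookkeeping and the $d_2$-indeterminacy discussion entirely (your $\hat{F}_i$ is honestly the curvature of a connection on $\hat{X}\to M$, so $[\hat F_i]=\hat c_i$ on the nose), and it makes transparent that the identity is exactly the statement that the transgression of $[\Psi|_G]$ vanishes. Your computation of $[\overline{H}|_G]$ via the fibre $G\times T^n$ and injectivity of $\pi_G^*$ is also correct and is essentially the mechanism the paper isolates later in Proposition \ref{tdrelation0}.

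One point needs tightening: you write that ``$[\Psi]\in H^3(P_0,\mathbb{R})$ lies in the image of $\sigma_0^*$'' and invoke the equality of that image with the kernel of restriction to a fibre. But $\Psi$ is not closed ($d\Psi=\sigma_0^*\eta$ with $\eta$ nonzero in general), so $[\Psi]$ is not a de Rham class and the cohomology-level statement does not literally apply. What you actually need, and correctly assert in the next line, is the form-level statement $\Psi=\sigma_0^*\omega+d\gamma$ with $\omega\in\Omega^3(M)$ not necessarily closed. This does follow from $[\Psi|_{G_x}]=0$ for all $x$ together with $H^1(G,\mathbb{R})=H^2(G,\mathbb{R})=0$, by the standard zig-zag in the filtered de Rham complex of $P_0\to M$: first subtract $d$ of a smoothly varying fibrewise primitive (e.g.\ via fibrewise Hodge theory, or after averaging $\Psi$ over $G$) to land in $F^1$, then use the vanishing of $\Omega^1(M,\mathcal{H}^2(G))$ and $\Omega^2(M,\mathcal{H}^1(G))$ to push down to a horizontal form, which is basic because $d\Psi$ is. Equivalently, the assignment $[\Psi|_G]\mapsto[\eta]$ is the transgression $d_4$, which here has no indeterminacy. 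With that justification supplied, the proof is complete.
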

\begin{proof}
Choose a connection $A$ on $P_0$ and pull it back to a connection on $P$. We can then represent $h$ by a closed $G \times T^n$-invariant $3$-form $H \in \Omega^3(P)$ such that $H = \sigma^*(H^0) - CS_3(A)$ for some $H^0 \in \Omega^3(X)$. Now choose a connection $\theta \in \Omega^1(X,\mathfrak{t}^n)$ for the principal $T^n$-bundle $X \to M$, where $\mathfrak{t}^n$ is the Lie algebra of $T^n$. Choose a basis $t_1, \dots , t_n$ for $\mathfrak{t}^n$ with dual basis $t^1, \dots , t^n$. We can identify $H^1(T^n,\mathbb{Z})$ with the lattice spanned by $t^1, \dots, t^n$ in $(\mathfrak{t}^n)^*$ and $H^1(\hat{T}^n,\mathbb{Z})$ with the lattice spanned by $t_1, \dots , t_n$ in $\mathfrak{t}^n$. We write $\theta = \theta^i t_i$. Now we may decompose $H^0$ as
\begin{equation*}
H^0 = \pi_0^*(K^0) + \pi^*_0(K^1_i) \theta^i + \frac{1}{2} \pi^*_0(K^2_{ij} )\theta^{ij} + \frac{1}{3!} \pi^*_0(K^3_{ijk}) \theta^{ijk} 
\end{equation*}
where $K^0, K^1_i , \dots , K^3_{ijk}$ are forms on $M$. Now since $H = \sigma^*(H^0) - CS_3(A)$ is T-dualisable the components $K^3_{ijk}$ must vanish and the components $K^2_{ij}$, must be exact, say $K^2_{ij} = dB^2_{ij}$. On replacing $H^0$ by $H^0 - dB$ and $H$ by $\sigma^*(H^0 - dB) - CS_3(A)$, where $B = \frac{1}{2}B^2_{ij} \theta^{ij}$ we may assume that $K^3_{ijk} = 0$ and $K^2_{ij} = 0$. Then $K^1_i$ is closed, so $K_i t^i$ represents a class in $H^2(M,H^1( T^n , \mathbb{Z} ))$. According to T-duality the dual Chern class $\sigma_0^*(\hat{c}) \in H^2(P_0 , H^1(T^n , \mathbb{Z}))$ agrees with $\sigma_0^*( [K^1_i t^i] )$ up to the image of the differential $d_2 \colon H^0(P_0 , H^2(T^n , \mathbb{Z})) \to H^2(P_0 , H^1(T^n , \mathbb{Z}))$. Now since $\sigma_0^* \colon H^0(M, H^2(T^n,\mathbb{Z})) \to H^0(P_0 , H^2(T^n , \mathbb{Z}))$ is an isomorphism it likewise follows that $\hat{c} \in H^2(M , H^1(T^n , \mathbb{Z}))$ agrees with $[K^1_i t^i]$ up to the image of the differential $d_2 \colon H^0(M, 
H^2(T^n 
, \mathbb{Z})) \to H^2(M , H^1(T^n , \mathbb{Z}))$. If $c = c^i t_i$ is the Chern class of $X \to M$ then the differential $d_2 \colon H^0(M, H^2(T^n , \mathbb{Z})) \to H^2(M , H^1(T^n , \mathbb{Z}))$ sends $t^i \wedge t^j$ to $c^i t^j - c^j t^i$. Write $\hat{c} = c_i t^i$. It follows that we have an equality
\begin{equation*}
\langle c , \hat{c} \rangle = c^i \hat{c}_i = c^i [ K^1_i].
\end{equation*}
Let $F$ be the curvature of $A$ and set $F^i = d\theta^i$, so that $F^i$ is a closed $2$-form on $M$ representing $c^i$. The equation $d( \sigma^*(H^0) - CS_3(A) ) = 0$ gives
\begin{equation*}
dK^0 + K^1_i F^i - c(F_A,F_A) = 0
\end{equation*}
where $F_A$ is the curvature of $A$. Passing to cohomology we obtain $-p_1(P) + [K^1_i] c^i = 0$ and the result follows.
\end{proof}


\subsection{T-duality and fluxes}\label{sectdflux}

Let $h \in H^3(P,\mathbb{Z})$ be a T-dualisable string class. Let $(\hat{P},\hat{h})$ be a T-dual and $\hat{\pi}_0 \colon \hat{X} \to M$ a torus bundle on $M$ which pulls back to $\hat{P}$. Let $C$ be the fibre product $C = P \times_{P_0} \hat{P}$ and $p \colon C \to P$, $\hat{p} \colon C \to \hat{P}$ the projections. We have a commutative diagram
\begin{equation}\label{tdiagram}
\xymatrix{
P \ar[d]^{\sigma} \ar[dr]^{\pi} & C \ar[l]_p \ar[r]^{\hat{p}} & \hat{P} \ar[d]^{\hat{\sigma}} \ar[dl]_{\hat{\pi}} \\
X \ar[dr]_{\pi_0} & P_0 \ar[d]^{\sigma_0} & \hat{X} \ar[dl]^{\hat{\pi}_0} \\ 
& M &
}
\end{equation}
Let $E$ be the exact Courant algebroid on $P$ with \v{S}evera class $h$ and $\hat{E}$ the exact Courant algebroid on $\hat{P}$ with \v{S}evera class $\hat{h}$. If we choose invariant representatives for $h,\hat{h}$ we get torus actions on $E,\hat{E}$. By T-duality there is an isomorphism $E/T^n \simeq \hat{E}/\hat{T}^n$. We construct  heterotic Courant algebroids $\mathcal{H}, \hat{\mathcal{H}}$ on $X$ and $\hat{X}$ from extended actions $\alpha \colon \mathfrak{g} \to \Gamma(E)$, $\hat{\alpha} \colon\mathfrak{g} \to \Gamma(\hat{E})$. If we knew that the isomorphism $E/T^n \simeq \hat{E}/\hat{T}^n$ exchanges the extended actions $\alpha,\hat{\alpha}$ we could obtain an isomorphism of the form $\mathcal{H}/T^n \simeq \mathcal{H}/\hat{T}^n$. In this section we prove that this is indeed possible and gives rise to the desired isomorphism of heterotic Courant algebroids.\\

Let $A \in \Omega^1(P,\mathfrak{g}))$ be any $T^n$-invariant connection for the $G$-bundle $P \to X$ and $\hat{A} \in \Omega^1(\hat{P},\mathfrak{g})$ any $\hat{T}^n$-invariant connection for the $G$-bundle $\hat{P} \to \hat{X}$. Similarly let $\theta \in \Omega^1(P , \mathfrak{t}^n)$ be a $G$-invariant connection for the $T^n$-bundle $P \to P_0$ and $\hat{\theta} \in \Omega^1(P,(\mathfrak{t}^n)^*)$ a $G$-invariant connection for $\hat{P} \to P_0$. We write $\theta = \theta^i t_i$ and $\hat{\theta} = \hat{\theta}_i t^i$. Choose a basis $e_1, \dots , e_m$ for $\mathfrak{g}$ and dual basis $e^1, \dots , e^m$ for $\mathfrak{g}^*$. Let $\psi \colon \mathfrak{g} \to \Gamma(TP)$ be the map sending an element of $\mathfrak{g}$ to the corresponding vector field on $P$ and $\hat{\psi} \colon \mathfrak{g} \to \Gamma(T\hat{P})$ the corresponding map for $\hat{P}$. Recall the operator $\iota$ used to define the differential $d_G = d - \iota$ in the Cartan model for equivariant cohomology. On $P$ we have $\iota \omega = 
e^j i_{\psi(e_j)} \omega$ and similarly for $\hat{P}$ with $\hat{\psi}$ replacing $\psi$. Define sections $v \in \Omega^0(P, \mathfrak{g} \otimes \mathfrak{t}^n)$ and $\hat{v} \in \Omega^0( \hat{P} , \mathfrak{g} \otimes (\mathfrak{t}^n)^*)$ by $\iota \theta = -cv$, $\iota \hat{\theta} = -c\hat{v}$, where we view $c$ as a map $c \colon \mathfrak{g} \to \mathfrak{g}^*$. More explicitly $v = v^i t_i$ and $\hat{v} = \hat{v}_i t^i$, where $i_{\psi(e_j)} \theta^i = -c(e_j, v^i), i_{\psi(e_j)} \hat{\theta}_i = -c(e_j,\hat{v}_i)$.

\begin{definition}
A $4$-tuple of connections $\theta,\hat{\theta},A,\hat{A}$ are {\em compatible} if on $C$ we have an equality (omitting pullback notation):
\begin{equation}\label{compatible}
\hat{A} - A = \langle \theta , \hat{v} \rangle - \langle \hat{\theta} , v \rangle.
\end{equation}
\end{definition}
We note that compatible $4$-tuples of connections $(\theta,\hat{\theta},A,\hat{A})$ always exist as we can take $A = \pi^*(A_0), \hat{A} = \pi^*(A_0)$ for some $G$-connection $A_0$ on $P_0$ and let $\theta,\hat{\theta}$ be pullbacks from $X,\hat{X}$. Then $v,\hat{v}$ vanish, so the connections are compatible.
\begin{proposition}\label{propcompat}
Let $(\theta,\hat{\theta},A,\hat{A})$ be a compatible $4$-tuple of connections. There exists a $G$-connection $A_0$ on $P_0$, a $T^n$-connection $\theta_0$ on $X$, a $\hat{T}^n$-connection on $\hat{X}$ and a $1$-form valued section $\lambda$ of the adjoint bundle $\mathfrak{g}_{P_0}$ on $M$ such that the $4$-tuple $(\theta,\hat{\theta},A,\hat{A})$ is given by (omitting pullback notation):
\begin{equation}\label{4tuple}
\begin{aligned}
\theta &= \theta_0 -c(v,A_0) -c(v,\lambda) \\
\hat{\theta} &= \hat{\theta}_0 -c(\hat{v},A_0) -c(\hat{v},\lambda) \\
A &= A_0 - \langle \theta_0 , \hat{v} \rangle \\
\hat{A} &= A_0 - \langle \hat{\theta}_0 , v \rangle.
\end{aligned}
\end{equation}
In addition $v,\hat{v}$ must satisfy a compatibility condition:
\begin{equation}\label{compatcond}
\langle cv \wedge c\hat{v} \rangle = 0
\end{equation}
where $cv \wedge c\hat{v}$ is the section of $\Omega^0(C , \wedge^2 \mathfrak{g})$ obtained by pulling back $cv$ and $c\hat{v}$ to $C$, pairing $\mathfrak{t}^n$ and $(\mathfrak{t}^n)^*$ factors and wedging $\mathfrak{g}$ factors.

Conversely for any choice of connections $(\theta_0,\hat{\theta}_0,A_0)$, $1$-form valued section of the adjoint bundle $\lambda$ and pair $v,\hat{v}$ satisfying (\ref{compatcond}) we obtain a compatible $4$-tuple of connections by (\ref{4tuple}).
\end{proposition}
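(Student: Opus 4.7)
The plan is to establish the bijection in two steps: first verify that the formulas (\ref{4tuple}) together with (\ref{compatcond}) produce a compatible 4-tuple, and then construct the parameters $(\theta_0, \hat\theta_0, A_0, \lambda)$ from any given compatible tuple.

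For the forward direction, I would start by checking that the objects defined by (\ref{4tuple}) are connections of the required types. For $\theta$, the fact that $A_0$ is pulled back from $P_0$ and $\lambda$ from $M$ gives $i_{\psi_{T^n}(t_l)} c(v, A_0) = 0$ and $i_{\psi_{T^n}(t_l)} c(v, \lambda) = 0$, so $i_{\psi_{T^n}(t_l)} \theta = t_l$; the $\mathrm{Ad}$-invariance of $c$ together with the equivariance of $v$ gives $G$-invariance. Using $i_{\psi(e_j)} A_0 = e_j$ and the fact that $\lambda$ is basic, one computes $\iota \theta = -cv$, so the $v$ appearing in (\ref{4tuple}) agrees with the $v$ determined by $\theta$. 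Analogous statements hold for $\hat\theta$, $A$, and $\hat A$. Substituting (\ref{4tuple}) into (\ref{compatible}) and cancelling the common $A_0$ and $\theta_0$ terms reduces the compatibility condition to $\langle c(\hat v, A_0 + \lambda), v \rangle = \langle c(v, A_0 + \lambda), \hat v \rangle$, which is exactly the tensorial identity encoded by (\ref{compatcond}).

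For the converse, given a compatible tuple $(\theta, \hat\theta, A, \hat A)$, I would first derive (\ref{compatcond}) from (\ref{compatible}) by contracting both sides on $C$ with the $G$-fundamental vector field $\psi(e_j)$: the left-hand side vanishes because both $A$ and $\hat A$ are $G$-connections (each contraction gives $e_j$), while the right-hand side becomes $-c(e_j, v^l)\hat v_l + c(e_j, \hat v_l)v^l$, whose vanishing for all $j$ is precisely the symmetry of $\sum_l cv^l \otimes c\hat v_l$ in $\mathfrak{g}^* \otimes \mathfrak{g}^*$, i.e.\ (\ref{compatcond}). Contracting (\ref{compatible}) with the torus fundamentals $\psi_{T^n}(t_l)$ and $\psi_{\hat T^n}(t^l)$ on $C$ yields the identities $i_{\psi_{T^n}(t_l)} A = -\hat v_l$ on $P$ and $i_{\psi_{\hat T^n}(t^l)} \hat A = -v^l$ on $\hat P$.

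To assemble the parameters I would pick an arbitrary reference $G$-connection $\tilde A_0$ on $P_0$ and define $\theta_0 := \theta + c(v, \tilde A_0)$ and $\hat\theta_0 := \hat\theta + c(\hat v, \tilde A_0)$. Since $\iota \theta_0 = -cv + cv = 0$ and $\theta_0$ is $G$-invariant, it descends to a 1-form on $X$, and the $T^n$-connection property is immediate from $i_{\psi_{T^n}(t_l)}\theta = t_l$ and $i_{\psi_{T^n}(t_l)}\tilde A_0 = 0$. Next define $A_0 := A + \langle \theta_0, \hat v\rangle$ on $P$: the torus-contraction identity makes $A_0$ both $T^n$-horizontal and $T^n$-invariant, while $i_{\psi(e_j)} A_0 = e_j$ (using that $\theta_0$ is $G$-horizontal) shows it is a $G$-connection, hence descends to a $G$-connection on $P_0$. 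The parallel construction $\hat A + \langle \hat\theta_0, v\rangle$ on $\hat P$ descends to the same connection on $P_0$ thanks to (\ref{compatible}) together with (\ref{compatcond}). Finally I would set $\lambda := \tilde A_0 - A_0 \in \Omega^1(M, \mathfrak{g}_{P_0})$, so that $\tilde A_0 = A_0 + \lambda$ and the formulas (\ref{4tuple}) follow by construction.

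The step I expect to be most delicate is verifying that the two candidates $A + \langle \theta_0, \hat v\rangle$ on $P$ and $\hat A + \langle \hat\theta_0, v\rangle$ on $\hat P$ descend to the same 1-form on $P_0$: this is the point where (\ref{compatible}) and (\ref{compatcond}) must cooperate, via the identity $\sum_l c(v^l, B) \hat v_l = \sum_l c(\hat v_l, B) v^l$ for any $\mathfrak{g}$-valued form $B$, which is a direct consequence of the symmetry of $\sum_l cv^l \otimes c\hat v_l$.
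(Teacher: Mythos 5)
Your proposal is correct and follows essentially the same route as the paper: you derive (\ref{compatcond}) by contracting (\ref{compatible}) with the $G$-fundamental vector fields, build $\theta_0,\hat\theta_0$ from a reference connection $\tilde A_0$ ($=B_0$ in the paper), define $A_0 = A + \langle\theta_0,\hat v\rangle$ (which equals the paper's $\beta + \langle c(v,B_0),\hat v\rangle$), and set $\lambda = \tilde A_0 - A_0$. The only cosmetic difference is that you check descent to $P_0$ by contraction identities where the paper argues that a form living on both $P$ and $\hat P$ must come from $P_0$; the delicate cancellation you flag is exactly the identity $\langle c(v,B_0),\hat v\rangle = \langle c(\hat v,B_0),v\rangle$ the paper uses.
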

\begin{proof}
Starting with Equation (\ref{compatible}) and contracting with the vector field associated to $e_j$ we obtain $0 = c(e_j , v^i) \hat{v}_i - c(e_j , \hat{v}_i)v^i$. Hence for all $j,k$ we have $c(e_j , v^i)c(e_k, \hat{v}_i) - c(e_j , \hat{v}_i )c(e_k, v^i) = 0$ which is Equation (\ref{compatcond}).\\

Let us re-write the compatibility condition (\ref{compatible}) in the form
\begin{equation*}
A + \langle \theta , \hat{v} \rangle = \hat{A} + \langle \hat{\theta} , v \rangle.
\end{equation*}
Since the left hand side is a form on $P$ and the right hand side a form on $\hat{P}$, they must both equal a form $\beta \in \Omega^1(P_0,\mathfrak{g})$ on $P_0$, thus $A = \beta - \langle \theta , \hat{v} \rangle$, $\hat{A} = \beta - \langle \hat{\theta} , v \rangle$. Choose a $G$-connection $B_0$ on $P_0$. Then by the definition of $v,\hat{v}$ we find that $\theta,\hat{\theta}$ admit decompositions $\theta = \theta_0 -c(v , B_0)$, $\hat{\theta} = \hat{\theta}_0 -c(\hat{v} , B_0)$, where $\theta_0$ is a $T^n$-connection on $X$ and $\hat{\theta}_0$ a $\hat{T}^n$-connection on $\hat{X}$.

By (\ref{compatcond}) we have an equality $\langle c(v,B_0) , \hat{v} \rangle = \langle c(\hat{v} , B_0 ) , v \rangle$. Define an equivariant $1$-form $A_0 \in \Omega^1(P_0,\mathfrak{g})$ by $A_0 = \beta + \langle c(v,B_0) , \hat{v} \rangle$. We then have
\begin{equation*}
\begin{aligned}
A &= A_0 - \langle \theta_0 , \hat{v} \rangle \\
\hat{A} &= A_0 - \langle \hat{\theta}_0 , v \rangle.
\end{aligned}
\end{equation*}
This implies that $A_0$ is a $G$-connection. Define $\lambda$ to be the $1$-form valued section of the adjoint bundle on $M$ such that $B_0 = A_0 + \lambda$. Substituting we obtain (\ref{4tuple}). Conversely given $(A_0,\theta_0,\hat{\theta}_0)$, a section $\lambda$ and a pair $v,\hat{v}$ satisfying (\ref{compatcond}), a direct computation shows that the $4$-tuple of connections given by Equations (\ref{4tuple}) is compatible.
\end{proof}

The following proposition shows that a T-dualisable string class $h$ can simultaneously be represented in two ways, one reflecting that $h$ is a string class, the other reflecting that $h$ is T-dualisable:
\begin{proposition}\label{tdrelation0}
Let $(\theta,\hat{\theta},A,\hat{A})$ be a compatible $4$-tuple of connections. Then the class $h$ is represented by a closed $3$-form $H \in \Omega^3(P)$ such that there exists a $T^n$-invariant form $H^0 \in \Omega^3(X)$ and a $G$-invariant form $K^0 \in \Omega^3(P_0)$ such that
\begin{equation*}
\begin{aligned}
H &= H^0 - CS_3(A) \\
& = K^0 + \langle \hat{F} \wedge \theta \rangle
\end{aligned}
\end{equation*}
where $F = d\hat{\theta} \in \Omega^2(P_0 , (\mathfrak{t}^n)^*)$ is the curvature of $\hat{\theta}$.
\end{proposition}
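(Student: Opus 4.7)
The plan is to construct a single 3-form $H$ on $P$ that admits both decompositions simultaneously. The starting point is Proposition \ref{proptdt}, applied to the T-dual pair of $T^n$-bundles $\pi\colon P\to P_0$ and $\hat{\pi}\colon \hat{P}\to P_0$ with the given connections $\theta$ and $\hat{\theta}$. This produces a form $K^0\in\Omega^3(P_0)$ satisfying $dK^0+\langle F\wedge\hat{F}\rangle=0$ (with $F=d\theta$ and $\hat{F}=d\hat{\theta}$) such that
$$H:=K^0+\langle\hat{F}\wedge\theta\rangle$$
is closed and represents $h$. After averaging $K^0$ over the compact group $G$, one may take $K^0$ to be $G$-invariant, so $H$ is $G\times T^n$-invariant; this gives the second decomposition in the statement.

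To produce the first decomposition $H=\sigma^*(H^0)-CS_3(A)$ for some $T^n$-invariant $H^0\in\Omega^3(X)$, I would appeal to Proposition \ref{extact}. Once the normalization $\xi=-cA$ is fixed by the chosen connection $A$, that proposition delivers the string form as soon as $(H,-cA)$ satisfies $d_G(H-cA)=c$ in the Cartan complex. Unpacking this equation by equivariant degree, the top piece is $dH=0$ (already secured), the $S^2(\mathfrak{g}^*)$ piece is automatic from $\xi=-cA$, and the remaining piece is the horizontality condition
$$i_{\psi(e)}H=-c(e,dA)\qquad\text{for every } e\in\mathfrak{g},$$
where $\psi(e)$ denotes the fundamental vector field on $P$ and the identity $i_{\psi(e)}CS_3(A)=c(e,dA)$ has been used. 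The $T^n$-invariance of the resulting $H^0$ is then automatic, since $H$ and $CS_3(A)$ are $T^n$-invariant and $\sigma$ is $T^n$-equivariant.

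The whole proof thus reduces to verifying this horizontality identity, and this is where compatibility of the 4-tuple enters. Using the parametrization of Proposition \ref{propcompat},
$$\theta=\theta_0-c(v,A_0)-c(v,\lambda),\qquad A=A_0-\langle\theta_0,\hat{v}\rangle,$$
together with the analogous expressions for $\hat{\theta}$ and $\hat{A}$, one computes $i_{\psi(e)}\theta=-c(e,v)$, using that $\theta_0$ and $\lambda$ pull back from spaces on which the $G$-action is trivial while $i_{\psi_0(e)}A_0=e$ and $i_{\psi_0(e)}F_{A_0}=0$. Expanding $\hat{F}=d\hat{\theta}$ and $dA$ in the same parametrization, then matching $i_{\psi(e)}H$ against $-c(e,dA)$ term by term, the closedness relation $dK^0=-\langle F\wedge\hat{F}\rangle$ absorbs the pure-curvature contributions coming from $K^0$, and the compatibility condition (\ref{compatcond}), $\langle cv\wedge c\hat{v}\rangle=0$, cancels the residual cross-terms between $v$ and $\hat{v}$.

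The main obstacle is the algebraic bookkeeping in this last step: one must track contributions from $(A_0,\theta_0,\hat{\theta}_0,\lambda,v,\hat{v})$ and the several curvature identities simultaneously. However, Proposition \ref{propcompat} has been set up precisely so that these cancellations occur, and (\ref{compatcond}) is exactly the residual obstruction the compatibility of $(\theta,\hat{\theta},A,\hat{A})$ is designed to eliminate, so I expect the verification to succeed.
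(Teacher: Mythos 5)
Your overall strategy is a genuine reversal of the paper's: you start from the T-duality decomposition $H=K^0+\langle\hat F\wedge\theta\rangle$ supplied by Proposition \ref{proptdt} and try to verify the string-class decomposition afterwards, whereas the paper starts from the most general invariant $H^0$ and solves for its fibre components so that $H^0-CS_3(A)$ automatically has the T-dual form. Your logical reduction is correct as far as it goes: for a closed $G\times T^n$-invariant $H$ and $\xi=-cA$, the equation $d_G(H-cA)=c$ does reduce to the horizontality identity $i_{\psi(e)}H=-c(e,dA)$, and by the computation in the proof of Proposition \ref{extact} (together with $i_{\psi(e)}CS_3(A)=c(e,dA)$) this forces $H=H^0-CS_3(A)$ with $H^0$ basic and $T^n$-invariant. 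The gap is that this identity is simply false for a generic $K^0$ produced by Proposition \ref{proptdt}. Since $i_{\psi_0(e)}\hat F_i=d(c(e,\hat v_i))$ and $i_{\psi(e)}\theta^i=-c(e,v^i)$, one has $i_{\psi(e)}H=i_{\psi_0(e)}K^0+d(c(e,\hat v_i))\wedge\theta^i-c(e,v^i)\hat F_i$, so horizontality prescribes $i_{\psi_0(e)}K^0$ exactly; in particular it must contain the term $-c(e,dA_0)$, which restricts to the nonzero form $\tfrac12 c(e,[\omega,\omega])$ on the $G$-fibres of $P_0$. But the only data you have about $K^0$ are its $G$-invariance, $dK^0=-\langle F\wedge\hat F\rangle$, and the cohomology class of $H$; these determine $i_{\psi_0(e)}K^0$ only up to a closed $2$-form. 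Replacing $K^0$ by $K^0+d\beta$ for a $G$-invariant $2$-form $\beta$ with $i_{\psi_0(e)}d\beta\neq0$ preserves every hypothesis you invoke yet destroys horizontality. Equivalently: the representative coming from T-duality restricts on a $G$-fibre of $P\to X$ only to a form \emph{cohomologous} to the Cartan $3$-form, while $H^0-CS_3(A)$ restricts to exactly $\omega_3$, so an exact correction of $H$ is unavoidable and your ``term-by-term matching'' provides no mechanism for producing it. Neither $dK^0=-\langle F\wedge\hat F\rangle$ nor the compatibility condition $\langle cv\wedge c\hat v\rangle=0$ can close this, because neither constrains the undetermined closed part of $i_{\psi_0(e)}K^0$.

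To repair your route you would have to show that within the affine family of admissible $K^0$ there is one whose contractions $i_{\psi_0(e)}K^0$ take the prescribed values, while keeping $H$ closed and in the class $h$ --- which is essentially the computation the paper performs, just reorganised. The paper sidesteps the issue by expanding $H^0=\tilde H^0+H^1_i\theta_0^i+\tfrac12 H^2_{ij}\theta_0^{ij}+\tfrac1{3!}H^3_{ijk}\theta_0^{ijk}$, reading off $H^1_i,H^2_{ij},H^3_{ijk}$ uniquely from the requirement that $H^0-CS_3(A)=K^0+\langle\hat F\wedge\theta\rangle$, checking that these coefficients descend to $M$ (this uses $\hat F_i=(\hat F_0)_i-d(c(\hat v_i,A_0))$), and then invoking the topological identity $-p_1(P_0)+[F_0^i]\wedge[(\hat F_0)_i]=0$ of Proposition \ref{topcond} to choose the basic piece $\tilde H^0$ making the result closed; the cohomology class is then corrected by a pullback from $M$, and the $\lambda\neq0$ case (which your sketch also omits) is handled by a shift. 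As written, your proposal defers the entire analytic content of the proposition to a verification that, for the form you have constructed, would fail.
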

\begin{proof}
First of all note that it suffices to show that we can find a $T^n$-invariant form $H^0 \in \Omega^3(X)$ and a $G$-invariant form $K^0 \in \Omega^3(P_0)$ such that $H^0 - CS_3(A)$ is closed and $H^0 - CS_3(A) = K^0 + \langle \hat{F} \wedge \theta \rangle$. For if we set $H' = H^0 - CS_3(A)$ then by considering Leray-Serre spectral sequences we see that the class $\delta = h - [H'] \in H^3(P,\mathbb{R})$ is a pullback from $M$. Let $H^M$ be a closed $3$-form on $M$ representing $\delta$. Then $h$ is represented by $H'+H^M = (H^0+H^M) - CS_3(A) = (K^0 + H^M) + \langle \hat{F} \wedge \theta \rangle$.\\

By Proposition \ref{propcompat} we may write $(\theta,\hat{\theta},A,\hat{A})$ in the form (\ref{4tuple}) for some $A_0,\theta_0,\hat{\theta}_0,\lambda$. We proceed first under the assumption that $\lambda = 0$. Choose a basis $t_1, \dots, t_n$ for $\mathfrak{t}^n$ with dual basis $t^1 , \dots , t^n$ and write $\theta = \theta^i t_i$, $\theta_0 = \theta_0^i t_i$, $v = v^i t_i$. We also let $F^i = d\theta^i$, $F^i_0 = d\theta^i_0$. Similarly define $\hat{\theta}_i$, $(\hat{\theta}_0)_i$, $\hat{v}_i$, $\hat{F}^i$, $(\hat{F}_0)_i$. The Chern-Simons $3$-form $CS_3(A) = CS_3(A_0 - \langle \theta_0 , \hat{v} \rangle)$ can be expanded in terms of the $\theta^i_0$ to obtain:
\begin{equation*}
\begin{aligned}
CS_3(A) &= CS_3(A_0) - c(A_0 , \hat{v}_i )F_0^i \\
& +\left( -c(dA_0 , \hat{v}_i) + c(d_{A_0} \hat{v}_i , A_0) + c(\hat{v}_jF_0^j , \hat{v}_i)  \right) \wedge \theta^i_0 \\
& +c(d_{A_0} \hat{v}_i , \hat{v}_j ) \theta^{ij}_0 - \frac{1}{3} c(\hat{v}_i , [\hat{v}_j , \hat{v}_k]) \theta_0^{ijk}
\end{aligned}
\end{equation*}
Now we seek to find a $T^n$-invariant $3$-form $H^0$ on $X$ such that $H^0 - CS_3(A)$ is closed and $H^0 - CS_3(A) = K^0 + \langle \hat{F} \wedge \theta \rangle$ for some $T^n$-invariant form $K^0$ on $P_0$. First we write down the most general invariant $3$-form on $X$ decomposed with respect to $\theta_0$:
\begin{equation*}
H^0 = \widetilde{H}^0 + H^1_i \theta^i_0 + \frac{1}{2} H^2_{ij} \theta_0^{ij} + \frac{1}{3!} H^3_{ijk} \theta_0^{ijk}.
\end{equation*}
The components $H^3_{ijk},H^2_{ij},H^1_i$ are uniquely determined by the condition that $H^0+ CS_3(A) = K^0 + \langle \hat{F} \wedge \theta \rangle$, resulting in:
\begin{equation}\label{constrh}
\begin{aligned}
H^3_{ijk} &= -2 c(\hat{v}_i , [\hat{v}_j , \hat{v}_k])\\
H^2_{ij} &= -2 c(d_{A_0} \hat{v}_i , \hat{v}_j ) \\
H^1_i &= -c(dA_0 , \hat{v}_i) + c(d_{A_0} \hat{v}_i , A_0) + c(\hat{v}_jF_0^j , \hat{v}_i) + \hat{F}_i.
\end{aligned}
\end{equation}
The expressions in (\ref{constrh}) for $H^3_{ijk}$, $H^2_{ij}$ are manifestly differential forms on $M$, however the expression for $H^1_i$ a priori appears to only be a form on $P_0$ because of its dependence on $A$. However using $\hat{F}_i = (\hat{F}_0)_i  - d c(\hat{v}_i , A_0)$ we find
\begin{equation*}
H^1_i = -2c(\hat{v}_i , F_{A_0} ) + c(\hat{v}_j F_0^j , \hat{v}_i ) + (\hat{F}_0)_i,
\end{equation*}
where $F_{A_0}$ is the curvature of $A_0$. This shows that $H^1_i$ is in fact a form on $M$. Having made these choices it certainly is the case that $H^0 - CS_3(A) = K^0 + \langle \hat{F} \wedge \theta \rangle$, for some $G$-invariant form $K^0$ on $P_0$. It remains to show that we can choose $\tilde{H}^0 \in \Omega^3(M)$ such that $H^0 - CS_3(A)$ is closed. Note first that
\begin{equation*}
H^0 - CS_3(A) = \tilde{H}^0 - CS_3(A_0) + c(A_0 , \hat{v}_i )F_0^i  + \langle \hat{F} \wedge \theta_0 \rangle.
\end{equation*}
On differentiating we find that
\begin{equation*}
d(H^0 - CS_3(A) ) = d\tilde{H}^0 - c(F_{A_0} , F_{A_0}) + F_0^i \wedge (\hat{F}_0)_i.
\end{equation*}
From Proposition \ref{topcond} we have the topological condition $-p_1(P_0) + [F_0]^i \wedge [ (\hat{F}_0)_i] = 0 \in H^4(M,\mathbb{R})$. This shows that we can indeed find $\tilde{H}^0$ so that $H^0 - CS_3(A)$ is closed. This completes the proof in the case $\lambda = 0$. For the more general case with non-zero $\lambda$ set $\pi = -c(v , \lambda)$, $\hat{\pi} = -c(\hat{v} , \lambda)$. Then $(\theta - \pi , \hat{\theta} -\hat{\pi} , A , \hat{A})$ is a $4$-tuple of compatible connections for which there is no $\lambda$ term. Thus we can find $H^0,K^0$ so that $H^0 - CS_3(A)$ is closed and $H^0 - CS_3(A) = K^0 + \langle d (\hat{\theta} - \hat{\pi}) \wedge (\theta - \pi) \rangle$. Thus $H + d \langle \hat{\pi} \wedge \theta \rangle = (H^0 - d \langle \hat{\pi} \wedge \theta \rangle) - CS_3(A) = (K^0 - \langle \hat{F} - d\hat{\pi} \wedge \pi \rangle - \langle \hat{\pi} \wedge F \rangle ) + \hat{F} \wedge \theta$ has the required form.
\end{proof}

\begin{proposition}\label{tdrelation}
Let $(\theta,\hat{\theta},A,\hat{A})$ be a compatible $4$-tuple of connections. The classes $h,\hat{h}$ admit representatives $H \in \Omega^3(P)$, $\hat{H} \in \Omega^3(P)$ such that $H$ has the form $H = H^0 - CS_3(A)$, where $H^0 \in \Omega^3(X)$ is $T^n$-invariant and such that on $C$
\begin{equation*}
\hat{H} = H + d \langle \theta \wedge \hat{\theta} \rangle.
\end{equation*}
\end{proposition}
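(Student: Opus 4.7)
The plan is to invoke Proposition \ref{tdrelation0} for $h$ and then define $\hat H$ explicitly from the same base datum on $P_0$. First, Proposition \ref{tdrelation0} produces a closed representative $H \in \Omega^3(P)$ of $h$ admitting both presentations
\begin{equation*}
H = H^0 - CS_3(A) = K^0 + \langle \hat F \wedge \theta \rangle,
\end{equation*}
with $H^0 \in \Omega^3(X)$ being $T^n$-invariant and $K^0 \in \Omega^3(P_0)$ being $G$-invariant. The first presentation is precisely the one required of $H$ in the conclusion, so the substantive work is to produce the matching $\hat H$.

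Using the same $K^0$, I would define
\begin{equation*}
\hat H := K^0 + \langle F \wedge \hat\theta \rangle \in \Omega^3(\hat P),
\end{equation*}
where $F$ is the curvature of $\theta$ and $K^0$ is pulled back to $\hat P$ along $\hat\pi$. Closedness of $\hat H$ is a direct consequence of $dH = 0$: the latter forces $dK^0 = -\langle \hat F \wedge F \rangle$, and since the pairing on two $2$-forms is symmetric, $d\hat H = dK^0 + \langle F \wedge \hat F \rangle = 0$. The T-duality relation on $C$ is then an immediate computation using $d(\theta^i \wedge \hat\theta_i) = F^i \wedge \hat\theta_i - \theta^i \wedge \hat F_i$:
\begin{equation*}
\hat p^*(\hat H) - p^*(H) = \langle F \wedge \hat\theta \rangle - \langle \hat F \wedge \theta \rangle = d\langle \theta \wedge \hat\theta \rangle.
\end{equation*}

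The main obstacle is to verify that $[\hat H] = \hat h$. For this I would invoke Proposition \ref{proptdt}, applied now to the ordinary T-duality between the torus bundles $P \to P_0$ and $\hat P \to P_0$: the displayed formulas for $H$ and $\hat H$ are precisely those produced there from the datum $\bar H = K^0$, the curvature constraint $d\bar H + \langle F \wedge \hat F \rangle = 0$ being automatic from closedness of $H$. Consequently the classes $[H]$ and $[\hat H]$ form a T-dual pair over $P_0$; since $[H] = h$ by Proposition \ref{tdrelation0} and, by Proposition \ref{proptopcond}, $\hat P$ is the pullback of a uniquely determined $\hat X \to M$ making $\hat h$ the unique T-dual class on $\hat P$, one concludes $[\hat H] = \hat h$. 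Any residual discrepancy between the $\bar H$ implicit in Proposition \ref{proptdt} and our $K^0$ (a closed $3$-form on $P_0$ with $\pi$-exact pullback) is absorbed by the freedom described in Proposition \ref{scea}, under which $\mathcal{EA}(P)$ and $\mathcal{EA}(\hat P)$ are torsors over the same $H^3(M,\mathbb{R})$; using this, one can adjust $K^0$ by a pullback from $M$ so that both $[H] = h$ and $[\hat H] = \hat h$ hold simultaneously.
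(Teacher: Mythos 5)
Your construction of $\hat H := K^0 + \langle F \wedge \hat\theta\rangle$ is well defined and closed, and the identity $\hat p^*(\hat H) - p^*(H) = d\langle\theta\wedge\hat\theta\rangle$ is a correct computation. The gap is exactly where you locate it: the claim $[\hat H] = \hat h$. The form relation only pins down $\hat p^*[\hat H] = p^*[H] = \hat p^*\hat h_{\mathbb{R}}$, so $[\hat H]$ and $\hat h_{\mathbb{R}}$ agree only modulo $\ker\bigl(\hat p^* \colon H^3(\hat P,\mathbb{R}) \to H^3(C,\mathbb{R})\bigr)$, which is generally nonzero (it contains classes of the form $\hat\pi^*x \smallsmile \hat\pi^*c^i$ with $c^i$ the Chern classes of $P \to P_0$). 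Concretely, applying Proposition \ref{tdrelation0} to $\hat h$ gives $\hat h_{\mathbb{R}} = [\hat K^0 + \langle F\wedge\hat\theta\rangle]$ for some \emph{other} base form $\hat K^0$ on $P_0$, and $[\hat H] - \hat h_{\mathbb{R}} = \hat\pi^*[K^0 - \hat K^0]$; the class $[K^0-\hat K^0]$ lies in $\ker q^*$ but need not vanish, nor need it be a pullback from $M$. Your two supporting arguments do not close this: Proposition \ref{proptdt} is stated in the forward direction (T-duality implies existence of such forms), not the converse you need; and Proposition \ref{proptopcond} gives uniqueness of the \emph{bundle} $\hat X$, not of the dual \emph{class} $\hat h$ on $\hat P$. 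The final repair — adjusting $K^0$ by a pullback from $M$ — cannot work even when the discrepancy happens to be such a pullback, because adding $\sigma_0^*\delta$ to $K^0$ shifts $[H]$ and $[\hat H]$ by the same class, so restoring $[\hat H] = \hat h$ destroys $[H]=h$.

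This missing step is in fact the entire content of the paper's proof. There one starts from representatives of \emph{both} classes, $H = K^0 + \langle\hat F\wedge\theta\rangle$ and $\hat H = \hat K^0 + \langle F\wedge\hat\theta\rangle$, uses the T-duality hypothesis (via \cite[Theorem 4.6]{bar}) to write $\hat K^0 - K^0 = dB$ for an invariant $2$-form $B$ on $C$ with no $\theta^i\wedge\hat\theta_j$-terms, and then carries out a careful decomposition of $B$ — eliminating the $\mathfrak{g}_{P_0}$-valued components using $H^1(G,\mathbb{R}) = H^2(G,\mathbb{R}) = 0$ and splitting the remainder as $\hat b - b$ with $b$ on $X$ and $\hat b$ on $\hat X$ — so that the exact discrepancy can be absorbed separately into $H$ and $\hat H$ without changing their cohomology classes or the structural form of $H$. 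Only after this does one arrive at representatives built from a common base form, which is what your argument assumes at the outset. To fix your write-up you would need to supply this absorption argument (or an equivalent one) rather than appeal to uniqueness of the T-dual.
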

\begin{proof}
We may choose $H^0,\hat{H}^0,K^0,\hat{K^0}$ as in Proposition \ref{tdrelation0} such that $H = H^0 - CS_3(A) = K^0 + \langle \hat{F} \wedge \theta \rangle$ is a representative for $h$ and $\hat{H} = \hat{H}^0 - CS_3(\hat{A}) = \hat{K}^0 + \langle F \wedge \hat{\theta} \rangle$ is a representative for $\hat{h}$. As in the proof of \cite[Theorem 4.6]{bar} we can find a $T^n \times \hat{T}^n$-invariant $2$-form $B$ on $C$ such that $\hat{K}^0 - K^0 = dB$ and such that when decomposed into forms on $P_0$ using $\theta, \hat{\theta}$, there are no $\theta^i \wedge \hat{\theta}_j$-terms. On averaging we can assume that $B$ is actually $G \times T^n \times \hat{T}^n$-invariant.\\

Choose $A_0$ to be a connection on $P_0$ such that Equation (\ref{4tuple}) holds and define $\pi$, $\hat{\pi}$ as in the proof of Proposition \ref{tdrelation0}. Then from the proof of Proposition \ref{tdrelation0} we find that $\hat{H} - H$ has the form
\begin{equation*}
\hat{H} - H = J^0 + d \langle \theta \wedge \hat{\theta} \rangle + d \langle \pi \wedge \hat{\theta} \rangle - d \langle \hat{\pi} \wedge \theta \rangle 
\end{equation*}
where $J^0$ is a $3$-form on $M$. On the other hand we clearly have
\begin{equation*}
\hat{H} - H = \hat{K}^0 - K^0 + d \langle \theta \wedge \hat{\theta} \rangle.
\end{equation*}
Combining these with $\hat{K}^0 - K^0 = dB$ we get
\begin{equation*}
d(B - \langle \pi \wedge \hat{\theta} \rangle + \langle \hat{\pi} \wedge \theta \rangle ) = J^0.
\end{equation*}
Let $B' = B - \langle \pi \wedge \hat{\theta} \rangle + \langle \hat{\pi} \wedge \theta \rangle$, so that $B'$ is a $G \times T^n \times \hat{T}^n$-invariant form on $C$ which has no $\theta^i \wedge \hat{\theta}_j$-terms and $dB' = J^0$, where $J^0$ is a form on $M$. We can use $\theta,\hat{\theta},A_0$ to decompose $B'$ in terms of forms on the base $M$:
\begin{equation*}
\begin{aligned}
B' &= B^0 + B^1_i \theta^i + (\hat{B}^1)^i \hat{\theta}_i + \frac{1}{2} B^2_{ij} \theta^{ij} + \frac{1}{2} (\hat{B}^2)^{ij} \hat{\theta}_{ij}\\
& + (C^0 , A_0) + ( C^1_i , A_0)\theta^i + ( (\hat{C}^1)^i , A_0) \hat{\theta}_i + ( C^2 , A_0 \wedge A_0 ).
\end{aligned}
\end{equation*}
In this expression the coefficients $B^0,B^1_i, \dots , (\hat{B}^2)^{ij}$ are forms on $M$. The coefficients $C^0, C^1_i , (\hat{C}^1)^i$ are forms on $M$ valued in the dual $\mathfrak{g}_{P_0}^*$ of adjoint bundle $\mathfrak{g}_{P_0}$ for $P_0 \to M$ and $C^2$ is valued in $\wedge^2 \mathfrak{g}_{P_0}^*$. We use $( \, \, , \, \, )$ to denote the dual pairing of $\mathfrak{g}_{P_0}$ and $\mathfrak{g}_{P_0}^*$ and extend this to a pairing of $\wedge^2 \mathfrak{g}_{P_0}$ and $\wedge^2 \mathfrak{g}_{P_0}^*$.

Since $dB'$ is equal to a form on $M$ it must be that the $C^2$-term determines a class in $H^0(M , H^2(G,\mathbb{R}))$. However $H^2(G, \mathbb{R}) = 0$, so after adding to $B'$ an exact term of the form $d( D , A_0 )$, where $D$ is a $1$-form valued section of $\mathfrak{g}_{P_0}^*$ we may assume $C^2 = 0$. We also have $H^1(G,\mathbb{R}) = 0$, so a similar argument allows us to eliminate the terms $C^0, C^1_i , (\hat{C}^1)^i$ by adding exact terms to $B'$. Since exact terms do not change $dB'$ we still have $dB' = J^0$. Now we may write $B' = \hat{b} - b$, where
\begin{equation*}
\begin{aligned}
b &= -B^0 - B^1_i \theta^i - \frac{1}{2} B^2_{ij} \theta^{ij}\\
\hat{b} & = (\hat{B}^1)^i \hat{\theta}_i + \frac{1}{2} (\hat{B}^2)^{ij} \hat{\theta}_{ij}
\end{aligned}
\end{equation*}
Note that $b$ is an invariant $2$-form on $X$ and $\hat{b}$ is an invariant $2$-form on $\hat{X}$. Then since $dB' = J^0$ we have
\begin{equation*}
d\hat{b} = J^0 + db. 
\end{equation*}
The left hand side is a form on $X$ and the right hand side a form on $\hat{X}$, so $db$ and $d\hat{b}$ must be forms on $M$. We have that $\hat{K}^0 - K^0 = dB$ and $B = \hat{b} - b + \langle \pi \wedge \hat{\theta} \rangle - \langle \hat{\pi} \wedge \theta \rangle$, so
\begin{equation*}
K^0 - d(b + \langle \hat{\pi} \wedge \theta \rangle) = \hat{K}^0 - d( \hat{b} + \langle \pi \wedge \hat{\theta} \rangle ).
\end{equation*}
The left hand side is a form on $P$ while the right hand side is a form on $\hat{P}$, so they must both equal forms on $P_0$. Thus $d(b + \langle \hat{\pi} \wedge \theta \rangle)$ and $d( \hat{b} + \langle \pi \wedge \hat{\theta} \rangle )$ are forms on $P_0$. However, $b + \langle \hat{\pi} \wedge \theta \rangle$ is a form on $X$ so $d(b + \langle \hat{\pi} \wedge \theta \rangle)$ is a form on both $X$ and $P_0$, so it is actually a form on $M$. Similarly $d( \hat{b} + \langle \pi \wedge \hat{\theta} \rangle )$ must actually be a form on $M$. Replacing $H$ by $H - d(b + \langle \hat{\pi} \wedge \theta \rangle)$ and $\hat{H}$ by $\hat{H} - d( \hat{b} + \langle \pi \wedge \hat{\theta} \rangle )$ completes the proof of the proposition.
\end{proof}

The compatibility condition (\ref{compatible}) can be conveniently rewritten in the form:
\begin{equation*}
\hat{A} - A = - \iota \langle \theta \wedge \hat{\theta} \rangle.
\end{equation*}
Let $H,\hat{H},H^0,\hat{H}^0$ be as in Proposition \ref{tdrelation}, so that in particular $\hat{H} = H + d \langle \theta \wedge \hat{\theta} \rangle$. Set $\Phi = H + \xi$, $\hat{\Phi} = \hat{H} + \hat{\xi}$, where $\xi = -cA$, $\hat{\xi} = -c\hat{A}$. Then combining the above results we have:
\begin{equation*}
\hat{\Phi} = \Phi + d_G \langle \theta \wedge \hat{\theta} \rangle.
\end{equation*}
By assumption $H$ has the form $H = H^0 - CS_3(A)$, so from Proposition \ref{extact} we get that $\Phi$ is a solution of $d_G \Phi = c$. From the above we obtain $d_G \hat{\Phi} = c$, so that $\hat{\Phi}$ determines an extended action and $\hat{H}$ has the form $\hat{H} = \hat{H}^0 - CS_3(\hat{A})$ for some $\hat{T}^n$-invariant form $\hat{H}^0 \in \Omega^3(\hat{X})$. We now observe the following: let $(\theta,\hat{\theta},A,\hat{A})$ be any compatible $4$-tuple of connections, $H \in \Omega^3(P)$, $\hat{H} \in \Omega^3(\hat{P})$ any pair of $3$-forms such that $\hat{H} + \hat{\xi} = H + \xi + d_G \langle \theta \wedge \hat{\theta} \rangle $, then necessarily $H,\hat{H}$ are real string classes and satisfy the conditions of Propositions \ref{tdrelation0}, \ref{tdrelation}.
\begin{definition}\label{hettddata}
We say that the data $(\theta,\hat{\theta},A,\hat{A},H,\hat{H})$ satisfies heterotic T-duality if $(\theta,\hat{\theta},A,\hat{A})$ is a compatible $4$-tuple of connections and
\begin{equation*}
\hat{H} + \hat{\xi} = H + \xi + d_G \langle \theta \wedge \hat{\theta} \rangle .
\end{equation*}
\end{definition}
We have established the following existence result:
\begin{proposition}
Let $P \to M$ be a principal $G \times T^n$-bundle and $h \in H^3(P,\mathbb{Z})$ a T-dualisable string class. Let $(\hat{P},\hat{h})$ be a T-dual to $(P,h)$ over $P_0$ and $\hat{\pi}_0 \colon \hat{X} \to M$ a torus bundle on $M$ which pulls back to $\hat{P}$. For any compatible $4$-tuple of connections $(\theta,\hat{\theta},A,\hat{A})$ there exists forms $H,\hat{H}$ representing $h,\hat{h}$ and such that $(\theta,\hat{\theta},A,\hat{A},H,\hat{H})$ satisfies heterotic T-duality.
\end{proposition}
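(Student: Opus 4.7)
The plan is to observe that the Proposition is essentially a consolidation of Propositions \ref{tdrelation} and \ref{propcompat}, and that it follows by combining the geometric content of the former with the algebraic reformulation of the compatibility condition. There is very little new work to do; the only subtlety is matching the degree-$1$ (equivariant) component of the desired equation against the compatibility identity $\hat{A}-A = \langle\theta,\hat{v}\rangle - \langle\hat{\theta},v\rangle$.

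First, I would apply Proposition \ref{tdrelation} to the given compatible $4$-tuple $(\theta,\hat{\theta},A,\hat{A})$. This yields representatives $H \in \Omega^3(P)$ and $\hat{H} \in \Omega^3(\hat{P})$ of $h$ and $\hat{h}$ with $H = H^0 - CS_3(A)$ for some $T^n$-invariant $H^0 \in \Omega^3(X)$ satisfying $dH^0 = \langle F_A, F_A\rangle$, and with
\begin{equation*}
\hat{H} = H + d\langle\theta\wedge\hat{\theta}\rangle
\end{equation*}
on $C$. By the T-dual version of Proposition \ref{tdrelation0} (swapping the roles of $P$ and $\hat{P}$, $\theta$ and $\hat{\theta}$, $A$ and $\hat{A}$, noting that $(\theta,\hat{\theta},A,\hat{A})$ compatible is symmetric in its two halves), $\hat{H}$ simultaneously admits a presentation $\hat{H} = \hat{H}^0 - CS_3(\hat{A})$ with $\hat{H}^0 \in \Omega^3(\hat{X})$ and $d\hat{H}^0 = \langle F_{\hat{A}}, F_{\hat{A}}\rangle$. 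By Proposition \ref{extact}, this means that both $\Phi := H+\xi$ (with $\xi=-cA$) and $\hat{\Phi} := \hat{H}+\hat{\xi}$ (with $\hat{\xi}=-c\hat{A}$) satisfy $d_G\Phi = c = d_G\hat{\Phi}$.

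Second, I would translate the compatibility equation into a statement about $\xi$ and $\hat\xi$. Unpacking $\iota\langle\theta\wedge\hat{\theta}\rangle$ in a basis, contraction with $\psi(e_j)$ produces exactly the terms $\langle\theta,\hat{v}\rangle - \langle\hat{\theta},v\rangle$ after the identification $c\colon \mathfrak{g}\to\mathfrak{g}^*$; hence the compatibility condition (\ref{compatible}) is equivalent to
\begin{equation*}
\hat{\xi} - \xi \;=\; -c(\hat{A}-A) \;=\; -\iota\langle\theta\wedge\hat{\theta}\rangle.
\end{equation*}
Adding this to the degree-$3$ identity from Step 1 and using $d_G = d - \iota$ yields
\begin{equation*}
\hat{H} + \hat{\xi} \;=\; H + \xi + d\langle\theta\wedge\hat{\theta}\rangle - \iota\langle\theta\wedge\hat{\theta}\rangle \;=\; H + \xi + d_G\langle\theta\wedge\hat{\theta}\rangle,
\end{equation*}
which is precisely the condition of Definition \ref{hettddata}.

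The only point requiring any care is the second step: checking that the algebraic identity relating the horizontal component $\langle\theta,\hat{v}\rangle - \langle\hat{\theta},v\rangle$ of $\hat{A}-A$ to $\iota\langle\theta\wedge\hat{\theta}\rangle$ goes through under the $c$-identification. This is not a genuine obstacle, just a short bookkeeping computation with the dual bases $\{e_i\},\{e^i\}$ of $\mathfrak{g},\mathfrak{g}^*$ together with the defining formulae $\iota\theta = -cv$ and $\iota\hat\theta = -c\hat{v}$ for $v,\hat{v}$. Everything substantial has already been done in Propositions \ref{tdrelation0} and \ref{tdrelation}; the present proposition simply packages their output.
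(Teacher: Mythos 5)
Your proof is correct and follows essentially the same route as the paper: the paper likewise obtains the degree-$(3,0)$ identity $\hat{H}=H+d\langle\theta\wedge\hat{\theta}\rangle$ from Proposition \ref{tdrelation}, rewrites the compatibility condition as $\hat{\xi}-\xi=-\iota\langle\theta\wedge\hat{\theta}\rangle$, and adds the two using $d_G=d-\iota$. The only cosmetic difference is that the paper deduces $\hat{H}=\hat{H}^0-CS_3(\hat{A})$ a posteriori from $d_G\hat{\Phi}=c$ via Proposition \ref{extact} rather than by invoking the dual form of Proposition \ref{tdrelation0}, but this step is not needed for the stated conclusion in either version.
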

 

\subsection{T-duality commutes with reduction}\label{sectdcwr}

Suppose that we have a principal $G \times T^n$-bundle $P \to M$ and T-dualisable string class $h \in H^3(P,\mathbb{Z})$. As usual let $(\hat{P},\hat{h})$ be a T-dual and suppose the data $(\theta,\hat{\theta},A,\hat{A},H,\hat{H})$ satisfies heterotic T-duality according to Definition \ref{hettddata}. In particular we have $\hat{H} = H + d \langle \theta \wedge \hat{\theta} \rangle$, which we recall determines an isomorphism of exact Courant algebroids.

Let $E = TP \oplus T^*P$ be the exact Courant algebroid on $P$ with $H$-twisted Dorfman bracket and $\hat{E} = T\hat{P} \oplus T^*\hat{P}$ be the exact Courant algebroid on $\hat{P}$ with $\hat{H}$-twisted Dorfman bracket. Then since $H,\hat{H}$ are invariant under the torus actions we have that $T^n$ acts on $E$ and $\hat{T}^n$ acts on $\hat{E}$. Recall that using the connections $\theta,\hat{\theta}$ we obtain the identifications given in Equation (\ref{equidents}). According to Proposition \ref{propphiiso} we have an isomorphism of Courant algebroids $\phi \colon E/T^n \to \hat{E}/\hat{T}^n$ given by (\ref{equphi}).\\

Let $\psi \colon \mathfrak{g} \to \Gamma(TP)$ be the map which associates to an element of $\mathfrak{g}$ the corresponding vector field on $P$ and similarly define $\hat{\psi}$. Note that since the actions of $G$ on $P$ and $\hat{P}$ cover the action of $G$ on $P_0$ we have a commutative diagram
\begin{equation}\label{exarel1}
\xymatrix{
\Gamma(TP) \ar[r] & \Gamma(TP_0) & \Gamma(T\hat{P}) \ar[l] \\
& \mathfrak{g} \ar[ul]^{\psi} \ar[u]^{\psi_0} \ar[ur]_{\hat{\psi}} &
}
\end{equation}
where $\psi_0 \colon \mathfrak{g} \to \Gamma(TP_0)$ is the map corresponding to the action of $G$ on $P_0$. Let $\alpha \colon \mathfrak{g} \to \Gamma(E)$, $\hat{\alpha} \colon \mathfrak{g} \to \Gamma(\hat{E})$ be the extended actions determined by the connections $A,\hat{A}$. Thus $\alpha = (\psi , \xi)$ , $\hat{\alpha} = (\hat{\psi} , \hat{\xi})$ where $\xi = -cA$, $\hat{\xi} = -c\hat{A}$. Since $A,\hat{A}$ are $T^n$-invariant we have that $\alpha$ maps to $T^n$-invariant sections of $E$ and $\hat{\alpha}$ maps to $\hat{T}^n$-invariant sections of $\hat{E}$. Let $\mathcal{H} = E_{{\rm red}}$ be the reduction of $E$ by the extended action $\alpha$ and $\hat{\mathcal{H}} = \hat{E}_{{\rm red}}$ be the reduction by the extended action $\hat{\alpha}$. Thus $\mathcal{H}$ is a heterotic Courant algebroid on $X$ and $\hat{\mathcal{H}}$ is a heterotic Courant algebroid on $\hat{X}$. According to Proposition \ref{commutingred} the $T^n$-action on $E$ naturally defines a $T^n$-action on $\mathcal{H}$ and similarly we 
obtain an action of $\hat{T}^n$ on $\hat{\mathcal{H}}$. We have from Proposition \ref{commutingred} that the extended actions $\alpha,\hat{\alpha}$ naturally define extended actions on $E/T^n$, $\hat{E}/\hat{T}^n$ and that $\mathcal{H}/T^n \simeq (E/T^n)_{{\rm red}}$, $\hat{\mathcal{H}}/\hat{T}^n \simeq (\hat{E}/\hat{T}^n)_{{\rm red}}$.
\begin{proposition}\label{propisohet}
The isomorphism $\phi$ exchanges extended actions $\alpha,\hat{\alpha}$ in the sense that we have a commutative diagram
\begin{equation*}\xymatrix{
\mathfrak{g} \ar[r]^-{\alpha} \ar[dr]_-{\hat{\alpha}} & \Gamma(E) \ar[d]^-\phi \\
& \Gamma(\hat{E})
}
\end{equation*}
It follows that $\phi$ descends to an isomorphism $\phi \colon (E/T^n)_{{\rm red}} \to (\hat{E}/\hat{T}^n)_{{\rm red}}$. Composing with the natural identifications $\mathcal{H}/T^n \simeq (E/T^n)_{{\rm red}}$, $\hat{\mathcal{H}}/\hat{T}^n \simeq (\hat{E}/\hat{T}^n)_{{\rm red}}$ given in Proposition \ref{commutingred} we obtain an isomorphism of heterotic Courant algebroids $\mathcal{H}/T^n \simeq \hat{\mathcal{H}}/\hat{T}^n$.
\end{proposition}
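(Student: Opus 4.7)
My plan is to verify the core identity $\phi\circ\alpha = \hat\alpha$ (after passing to invariant sections, where $\phi$ is defined) by a direct computation in the explicit splittings $E/T^n = TP_0 \oplus \mathfrak t^n \oplus (\mathfrak t^n)^* \oplus T^*P_0$ and $\hat E/\hat T^n = TP_0 \oplus (\mathfrak t^n)^* \oplus \mathfrak t^n \oplus T^*P_0$ induced by $\theta,\hat\theta$. The descent to reductions and the identification with $\mathcal H/T^n \simeq \hat{\mathcal H}/\hat T^n$ then follow formally.

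First I would use Proposition \ref{propcompat} to parametrize the compatible 4-tuple as $\theta = \theta_0 - c(v,A_0+\lambda)$, $\hat\theta = \hat\theta_0 - c(\hat v,A_0+\lambda)$, $A = A_0 - \langle\theta_0,\hat v\rangle$, $\hat A = A_0 - \langle\hat\theta_0,v\rangle$, subject to $\langle cv\wedge c\hat v\rangle = 0$. I then decompose $\alpha(e) = \psi(e) - c(A,e) \in \Gamma(E)^{T^n}$ into its four components in the $\theta$-splitting: the $TP_0$-component is the fundamental vector field $\psi_0(e)$; the $\mathfrak t^n$-component is $\theta(\psi(e)) = -c(e,v)$ by the defining relation $\iota\theta = -cv$; the $(\mathfrak t^n)^*$-component is $i_{\psi_{T^n}(t_i)}(-c(A,e)) = c(\hat v_i,e)$, using that $A(\psi_{T^n}(t_i)) = -\hat v_i$ can be read off from $A = A_0 - \langle\theta_0,\hat v\rangle$; and the $T^*P_0$-component works out to $-c(A_0,e) + c(\hat v_j,e)\,c(v^j,A_0+\lambda)$. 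The analogous decomposition of $\hat\alpha(e)$ in $\hat E/\hat T^n$ is obtained by interchanging $v \leftrightarrow \hat v$ and swapping $\mathfrak t^n \leftrightarrow (\mathfrak t^n)^*$.

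Applying $\phi$ of (\ref{equphi}) to the decomposition of $\alpha(e)$, three of the four components of $\phi(\alpha(e))$ match those of $\hat\alpha(e)$ immediately by symmetry of $c$. The nontrivial check is the equality of the two $T^*P_0$-components, which reduces to $c(v^i,A_0+\lambda)\,c(\hat v_i,e) = c(v^i,e)\,c(\hat v_i,A_0+\lambda)$, and this is exactly the compatibility condition (\ref{compatcond}), $\langle cv\wedge c\hat v\rangle = 0$, evaluated on the pair of $\mathfrak g$-valued arguments $(e,A_0+\lambda)$. This $T^*P_0$-match is the main obstacle I anticipate, and it is precisely the point at which the geometric hypothesis of a compatible 4-tuple of connections is essential; the other components are bookkeeping in the splittings.

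Once $\phi\circ\alpha = \hat\alpha$ is established, the images $K = \alpha(\mathfrak g) \subset E/T^n$ and $\hat K = \hat\alpha(\mathfrak g) \subset \hat E/\hat T^n$ correspond under $\phi$. Since $\phi$ preserves the Courant pairing it sends $K^\perp$ to $\hat K^\perp$, and since it is built entirely from $G$-invariant data it is $G$-equivariant. Therefore $\phi$ descends to a Courant algebroid isomorphism $(E/T^n)_{\rm red} \simeq (\hat E/\hat T^n)_{\rm red}$, which when composed with the canonical identifications of Proposition \ref{commutingred} yields the desired isomorphism of heterotic Courant algebroids $\mathcal H/T^n \simeq \hat{\mathcal H}/\hat T^n$.
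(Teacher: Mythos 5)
Your proof is correct and follows essentially the same route as the paper: both verify $\phi\circ\alpha=\hat\alpha$ componentwise in the splittings induced by $\theta,\hat\theta$ and then descend to the reductions formally. The only cosmetic difference is that the paper packages both $T^*P_0$-components as $-c\beta$ for the single form $\beta = A+\langle\theta,\hat v\rangle = \hat A+\langle\hat\theta,v\rangle$ on $P_0$, so their equality is automatic, whereas you unpack $\beta$ via the parametrization of Proposition \ref{propcompat} and check the equality directly from the condition (\ref{compatcond}) — the same fact in different clothing.
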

\begin{proof}
Recall the compatibility condition for the connections $(\theta,\hat{\theta} ,A,\hat{A})$, namely
\begin{equation*}
\hat{A} - A = \langle \theta , \hat{v} \rangle - \langle \hat{\theta} , v \rangle.
\end{equation*}
As in the proof of Proposition \ref{propcompat} we find there is a form $\beta \in \Omega^1(P_0 , \mathfrak{g})$ defined on $P_0$ such that:
\begin{equation}\label{exarel2}
\begin{aligned}
A &= \beta - \langle \theta , \hat{v} \rangle \\
\hat{A} &= \beta - \langle \hat{\theta} , v \rangle.
\end{aligned}
\end{equation}
Using the decompositions of (\ref{equidents}) together with (\ref{exarel1}),(\ref{exarel2}) we find that the extended actions $\alpha,\hat{\alpha}$ take the form
\begin{equation*}
\begin{aligned}
\alpha & = ( \psi_0 , -cv , c\hat{v} , -c\beta) \\
\hat{\alpha} & = ( \psi_0 , -c\hat{v} , cv , -c\beta ).
\end{aligned}
\end{equation*}
The relation $\hat{\alpha} = \phi \circ \alpha$ directly follows. From here the proposition is immediate.
\end{proof}


\section{Examples}\label{secex}


\subsection{A general class of examples}\label{secgenex}

Suppose that $M$ is simply connected and that $H^2(M,\mathbb{Z})$ is torsion free and finitely generated, for example if $M$ is a simply connected finite CW complex. Choose $c \in H^2(M,\mathbb{Z})$ and let $\pi_0 \colon X_c \to M$ be the corresponding principal circle bundle. Let $G$ be a compact, connected, simply connected simple Lie group and $\sigma_0: P_a \to M$ a principal $G$-bundle with characteristic class $a \in H^4(M,\mathbb{Z})$ corresponding to the generator of $H^4(BG,\mathbb{Z}) = \mathbb{Z}$. Let $Q_{a,c}$ be the fibre product of $X_c$ and $P_a$. We have a commutative square
\begin{equation*}\xymatrix{
Q_{a,c} \ar[r]^{\pi} \ar[d]^{\sigma} & P_a \ar[d]^{\sigma_0} \\
X_c \ar[r]^{\pi_0} & M.
}
\end{equation*}
Assume $a$ is not a torsion class. By the Leray-Serre spectral sequence for $P_a \to M$ we get $H^1(P_a,\mathbb{Z}) = 0, H^2(P_a,\mathbb{Z}) = H^2(M,\mathbb{Z}), H^3(P_a,\mathbb{Z}) = H^3(M,\mathbb{Z}), H^4(P_a,\mathbb{Z}) = H^4(M,\mathbb{Z})/\langle a \rangle$. Moreover we find that $\sigma_0^* \colon H^i(M,\mathbb{Z}) \to H^i(P_a,\mathbb{Z})$ is an isomorphism for $i = 2,3$ and a surjection for $i=4$.

Define
\begin{equation*}
\begin{aligned}
Ann(c) &= \{ \hat{c} \in H^2(M,\mathbb{Z}) | c \smallsmile \hat{c} = 0 \}, \\
Ann(c \; {\rm mod} \; a) &= \{ \hat{c} \in H^2(M,\mathbb{Z}) | c \smallsmile \hat{c} = 0 ({\rm mod} \; a )\}.
\end{aligned}
\end{equation*}
Note that since $H^2(M,\mathbb{Z})$ is finitely generated and torsion free, the same is true for the above two groups. Then by the Gysin sequence for $Q_{a,c} \to P_a$ we find:
\begin{equation*}
H^3(Q_{a,c} , \mathbb{Z}) = H^3(M,\mathbb{Z}) \oplus Ann(c \; {\rm mod} \; a).
\end{equation*}
Classes in $H^3(M,\mathbb{Z}) \subseteq H^3(Q_{a,c}, \mathbb{Z})$ are pulled back from $M$ and the fibre integration $\pi_* \colon H^3(Q_{a,c} , \mathbb{Z}) \to H^2(P_a,\mathbb{Z})$ is projection to $Ann(c \; {\rm mod} \; a)$ followed by the inclusion $Ann(c \; {\rm mod} \; a) \subseteq H^2(M,\mathbb{Z}) = H^2(P_a,\mathbb{Z})$. Thus a class $h \in H^3(Q_{a,c} , \mathbb{Z})$ consists of a pair $h = (h_M , \hat{c})$ where $h_M \in H^3(M,\mathbb{Z})$ and $\hat{c} \in H^2(M,\mathbb{Z})$ with $c \smallsmile \hat{c} = 0 ({\rm mod} \; a)$. For heterotic T-duality we need $h$ to be a string structure.

\begin{lemma}
If $a$ is not in the image of $c \! \smallsmile \, \colon H^2(M,\mathbb{Z}) \to H^4(M,\mathbb{Z})$ then there are no string structures on $Q_{a,c}$. If $a$ is in the image of $c \! \smallsmile \, \colon H^2(M,\mathbb{Z}) \to H^4(M,\mathbb{Z})$ then $h = (h_M , \hat{c})$ is a string structure if and only if $c \smallsmile \hat{c} = a$.
\end{lemma}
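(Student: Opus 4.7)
The plan is as follows.

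For the first part, I use the fact that a principal $G$-bundle (with $G$ compact, connected, simply connected, simple) admits a string class if and only if its first Pontryagin class vanishes in integral cohomology. Since $\sigma \colon Q_{a,c} \to X_c$ is the pullback of $\sigma_0 \colon P_a \to M$ under $\pi_0$, its first Pontryagin class is $\pi_0^*(a) \in H^4(X_c,\mathbb{Z})$. The Gysin sequence for the circle bundle $\pi_0$,
\begin{equation*}
H^2(M,\mathbb{Z}) \xrightarrow{c \smile -} H^4(M,\mathbb{Z}) \xrightarrow{\pi_0^*} H^4(X_c,\mathbb{Z}),
\end{equation*}
shows $\pi_0^*(a) = 0$ if and only if $a$ lies in the image of $c \smile -$. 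This proves the first part.

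For the second part, assume $a \in \operatorname{Im}(c \smile -)$ and consider $h = (h_M,\hat c) \in H^3(M) \oplus \operatorname{Ann}(c \bmod a) = H^3(Q_{a,c},\mathbb{Z})$. A class $h$ is a string structure iff its restriction to a fibre $G$ of $\sigma$ equals $\omega_3$. This inclusion factors through the inclusion $\iota \colon S^1 \times G \hookrightarrow Q_{a,c}$ of a fibre of $Q_{a,c} \to M$; since $H^1(G)=H^2(G)=0$, the Kunneth formula gives $H^3(S^1 \times G,\mathbb{Z}) = H^3(G,\mathbb{Z}) = \mathbb{Z}\omega_3$, so restriction to $S^1 \times G$ faithfully detects the fibre class. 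Pullbacks from $M$ restrict to zero on this fibre, so the string condition depends only on the component $\hat c = \pi_*(h) \in \operatorname{Ann}(c \bmod a)$, where $\pi \colon Q_{a,c} \to P_a$ is the circle-bundle projection.

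To compute the contribution of $\hat c$, write $c \smile \hat c = ka$ and pick closed de~Rham representatives $F_M, \alpha_M, \eta_M$ for $c, \hat c, a$ satisfying $F_M \wedge \alpha_M = k\eta_M + d\mu_M$. Choose a $G$-connection on $P_a$ with Chern--Simons $3$-form $\xi \in \Omega^3(P_a)$; this satisfies $d\xi = \sigma_0^*\eta_M$ and, crucially, $j_G^*\xi = \omega_3$ at the form level (this is precisely the Chern--Weil description of the transgression $\omega_3 \mapsto a$). Let $\theta$ be a connection for $\pi$ with $d\theta = \pi^*\sigma_0^* F_M$. Then
\begin{equation*}
h_{\hat c} := \theta \wedge \pi^*\sigma_0^*\alpha_M - \pi^*\bigl(k\xi + \sigma_0^*\mu_M\bigr)
\end{equation*}
is closed, and $\pi_*(h_{\hat c}) = \hat c$, so it represents the $\hat c$-component of $h$. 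Restricting to the fibre $S^1 \times G$ kills the first term (as $\sigma_0^*\alpha_M$ vanishes on each $G$-slice), leaving $k\omega_3$ (up to the sign dictated by one's fibre-integration and connection conventions). Therefore $h$ is a string structure if and only if $k = 1$ (modulo sign), equivalently $c \smile \hat c = a$.

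The main technical input is the simultaneous construction of $\xi$ satisfying both $d\xi = \sigma_0^*\eta_M$ and $j_G^*\xi = \omega_3$ at the form level; this is exactly what the Chern--Simons form provides, and it is the mechanism that converts the algebraic cup-product $c \smile \hat c = a$ into the geometric statement that $h$ fibre-restricts to $\omega_3$. The remaining computations are routine, with sign conventions the only delicate point.
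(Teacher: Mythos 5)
Your proposal is correct and reaches the same conclusion, but it is organised differently from the paper's argument, and in one place it is actually more explicit. The paper splices the Gysin sequence for $X_c \to M$ with the Leray--Serre spectral sequence for $Q_{a,c} \to X_c$ into a single exact sequence
\begin{equation*}
0 \to H^3(M,\mathbb{Z}) \oplus Ann(c) \to H^3(Q_{a,c},\mathbb{Z}) \xrightarrow{\ i^*\ } H^3(G,\mathbb{Z}) = \mathbb{Z} \xrightarrow{\ a\ } H^4(M,\mathbb{Z})/\langle c \rangle,
\end{equation*}
reads off the first claim from exactness, and then simply \emph{asserts} that $i^*(h_M,\hat c) = k$ when $c \smallsmile \hat c = ka$. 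You instead handle the first claim by pulling back the obstruction: the integral characteristic class of $Q_{a,c} \to X_c$ is $\pi_0^*(a)$, which vanishes iff $a \in \mathrm{Im}(c\smallsmile -)$ by the Gysin sequence for $\pi_0$ — this is fine, since the spectral-sequence argument the paper gives for the real case works verbatim over $\mathbb{Z}$ because $H^1(G,\mathbb{Z}) = H^2(G,\mathbb{Z}) = 0$. For the second claim you supply what the paper omits: a de Rham verification that $i^*(h)$ depends only on $\hat c = \pi_*(h)$ (correct, since $\ker \pi_* = \mathrm{Im}\,\pi^*\sigma_0^*$ and pullbacks from $M$ die on the fibre) together with an explicit closed representative built from a connection $\theta$ and a Chern--Simons form, whose fibre restriction is $k\,\omega_3$. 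The one point to watch is the normalisation of $\xi$: with the paper's conventions $-CS_3(A)$ restricts to $+\omega_3$ on fibres but satisfies $d(-CS_3(A)) = -c(F,F)$, so your two requirements $d\xi = \sigma_0^*\eta_M$ and $j_G^*\xi = \omega_3$ hold only up to a global sign tied to the orientation of the circle fibre and the sign of the Euler class in the Gysin sequence; you flag this, and the paper is no more careful, but it is worth fixing the conventions once so that the answer is $c \smallsmile \hat c = a$ rather than $-a$. With that caveat your argument is complete, and arguably better documented than the original at the step $i^*(h) = k$.
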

\begin{proof}
Using the Gysin sequence for $X_c \to M$ followed by the Leray-Serre spectral sequence for $Q_{a,c} \to X_c$ we obtain an exact sequence
\begin{equation*}
0 \to H^3(M,\mathbb{Z}) \oplus Ann(c) \to H^3(Q_{a,c} , \mathbb{Z}) \buildrel i^* \over \to H^3(G,\mathbb{Z}) = \mathbb{Z} \buildrel a \over \to H^4(M,\mathbb{Z})/\langle c \rangle
\end{equation*}
where $i \colon G \to Q_{a,c}$ is the inclusion of a fibre. If $a$ is not in the image of $c \! \smallsmile\, \colon H^2(M,\mathbb{Z}) \to H^4(M,\mathbb{Z})$ then by exactness $1 \in \mathbb{Z} = H^3(G,\mathbb{Z})$ is not in the image of $i^*$ and there are no string structures. Suppose now that $a$ is in the image of $c \! \smallsmile \, \colon H^2(M,\mathbb{Z}) \to H^4(M,\mathbb{Z})$. In this case the map $i^* \colon H^3(Q_{a,c} , \mathbb{Z}) \to H^3(G,\mathbb{Z}) = \mathbb{Z}$ may be described as follows. Represent a class $h \in H^3(Q_{a,c} , \mathbb{Z})$ as a pair $h = (h_M , \hat{c})$ where $h_M \in H^3(M,\mathbb{Z})$ and $c \smallsmile \hat{c} = k a$ for some $k \in \mathbb{Z}$. Then $i^*(h) = k$. The result follows on noting that string structures are the classes such that $i^*(h) = 1$.
\end{proof}

Fix a choice of principal $G$-bundle $P_a \to X$ with non-torsion characteristic class $a \in H^4(M,\mathbb{Z})$. Then a circle bundle $Q_{a,c} \to P_a$ with T-dualisable string class corresponds to a triple
\begin{equation*}
(h_M , c , \hat{c}) \in H^3(M,\mathbb{Z}) \oplus H^2(M,\mathbb{Z}) \oplus H^2(M,\mathbb{Z})
\end{equation*}
such that $c \smallsmile \hat{c} = a$. We have established that in this instance heterotic T-duality is given by the involution $(h_M , c , \hat{c}) \mapsto (h_M , \hat{c} , c)$.


\subsection{Case of a trivial $G$-bundle}\label{sectrivial}

Consider the case of a principal $G$-bundle $P_0 = M \times G \to M$ over a space $M$. Let $X_c \to M$ be a principal circle bundle with Chern class $c \in H^2(M,\mathbb{Z})$ and $Q_{0,c} = X_c \times_M P_0 = X_c \times G$ the fibre product. Assume that $G$ is compact, simple and simply connected. Then $H^3(Q_{0,c},\mathbb{Z}) = H^3(X_c,\mathbb{Z}) \oplus H^3(G,\mathbb{Z})$. Let $\omega_3$ denote the generator of $H^3(G,\mathbb{Z}) = \mathbb{Z}$, let $\pi_G \colon Q_{0,c} \to G$ be the projection to $G$ and $\sigma_c \colon Q_{0,c} \to X_c$ the projection to $X_c$. Then a class $h \in H^3(Q_{0,c},\mathbb{Z})$ is a string class if and only if it is of the form $h = \sigma_c^*(h_0) + \pi_G^*(\omega_3)$ for a class $h_0 \in H^3(X_c , \mathbb{Z})$ which is uniquely determined from $h$. It is then easy to see that heterotic T-duality for the pair $(X_c , h)$ reduces to ordinary T-duality for the pair $(X_c , h_0)$. In other words, if $(X_{\hat{c}} , \hat{h})$ is a second pair consisting of a principal circle bundle $X_{\hat{c}}$ with Chern class $\hat{c} \in H^2(M,\mathbb{Z})$ and string class $\hat{h} \in H^3( Q_{0,\hat{c}} , \mathbb{Z})$ with $\hat{h} = \sigma_{\hat{c}}^* (\hat{h}_0) + \pi_G^*(\omega_3)$, then $(X_c,h),(X_{\hat{c}} , \hat{h})$ are heterotic T-duals if and only if $(X_c , h_0),(X_{\hat{c}} , \hat{h}_0)$ are ordinary T-duals. This argument extends easily to the case of higher rank torus bundles.


\subsection{$4$-manifolds}

Suppose that $M$ is a compact simply connected $4$-manifold. Then $H^4(M,\mathbb{Z}) = \mathbb{Z}$ and the cup product of degree $2$ cohomology classes defines an intersection form $\langle \, , \, \rangle$ on $H^2(M,\mathbb{Z})$. Let $G$ be a compact, simple and simply connected Lie group and fix a principal $G$-bundle $P_0 \to M$ and let $a \in \mathbb{Z} = H^4(M,\mathbb{Z})$ be the characteristic class corresponding to the generator of $H^4(BG,\mathbb{Z})$. If $a \neq 0$ then from Section \ref{secgenex} we have that pairs consisting of a principal circle bundle $X \to M$ and T-dualisable string class $h$ on $P = P_0 \times_M X$ correspond to pairs $(c,\hat{c}) \in H^2(M,\mathbb{Z}) \oplus H^2(M,\mathbb{Z})$ such that $\langle c , \hat{c} \rangle = a$. Repeating the analysis of Section \ref{secgenex} in the case of a trivial $G$-bundle, where $a=0$ we find that T-dualisable pairs $(X,h)$ correspond to pairs $(c,\hat{c}) \in H^2(M,\mathbb{Z}) \oplus H^2(M,\mathbb{Z})$ such that $\langle c , \hat{c} \rangle = 0$. As explained in Section \ref{sectrivial}, this is precisely the condition for ordinary T-duality.

In summary ordinary T-duality on $M$ corresponds to solutions of $\langle c , \hat{c} \rangle = 0$, while heterotic T-duality gives the more general equation $\langle c ,\hat{c} \rangle = a$. This extends the applicability of T-duality to a larger class of examples and allows for more flexibility in the possible changes in topology under T-duality. For example in the case $M = \mathbb{CP}^2$ any solution to $\langle c , \hat{c} \rangle = 0$ must have $c = 0$ or $\hat{c} = 0$, while if $a \neq 0$ we obtain many solutions to $\langle c , \hat{c} \rangle = a$ for which $c,\hat{c}$ are both non-zero.\\

Suppose in addition that $M$ is spin and let $\mathcal{F} \to M$ be the principal spin frame bundle. We can extend the analysis of Section \ref{secgenex} to the case of a principal $G \times Spin(4)$-bundle. If $P \to M$ is a principal $G$-bundle with characteristic class $a \in H^4(M,\mathbb{Z})$ then consider the fibre product $P \times_M \mathcal{F} \to M$. As explained in Section \ref{sechetd} the replacement of $P$ with $P \times_M \mathcal{F}$ is necessary for the anomaly cancellation condition (\ref{anom}). Now the equation for a T-dual pair becomes $\langle c , \hat{c} \rangle = a - \frac{1}{2}p_1(TM)$, or using the signature theorem this becomes $\langle c, \hat{c} \rangle = a - \frac{3}{2} \tau$, where $\tau$ is the signature of $M$.
\vspace{-0.1cm}
\subsection{Higher dimensional lens spaces}\label{sechdl}

Consider the case $M = \mathbb{CP}^n$. Then $H^2(M,\mathbb{Z}) = \mathbb{Z}$, $H^3(M,\mathbb{Z}) = 0$, $H^4(M,\mathbb{Z}) = \mathbb{Z}$ and the cup product $H^2(M,\mathbb{Z}) \otimes H^2(M,\mathbb{Z}) \to H^2(M,\mathbb{Z})$ is multiplication. Thus for a given $0 \neq a \in \mathbb{Z}$ we are looking for integers $c,\hat{c}$ such that $c \hat{c} = a$. Let $S^{2n+1} \to \mathbb{CP}^n$ be the Hopf fibration. This has Chern class $1$, so the lens space $\mathbb{Z}_c \backslash S^{2n+1}$ has Chern class $c$, where $\mathbb{Z}_c$ acts as a finite subgroup of the circle action.

The tangent bundle of $S^{2n+1}$ descends to a rank $2n+1$ bundle $V$ over $\mathbb{CP}^n$. Since $V = T\mathbb{CP}^{n} \oplus \mathbb{R}$, the first Pontryagin class of $V$ coincides with the first Pontryagin class of $T\mathbb{CP}^n$ which is $(n+1)\omega^2$, where $\omega$ is the K\"ahler form on $\mathbb{CP}^n$. Suppose that $n$ is odd, so that $\mathbb{CP}^n$ is a spin manifold. Then $V$ admits a spin structure. Recall that the characteristic class generating $H^4( BSpin(k) , \mathbb{Z})$ for $k \ge 4$ is the fractional Pontryagin class $\frac{1}{2}p_1$. For $n \ge 3$ odd we consider the spin bundle $P \to \mathbb{CP}^n$ of $V$ which has fractional Pontryagin class $\frac{1}{2}p_1(V) = \frac{1}{2}(n+1) \omega^2$. The pullback of $P$ to $S^{2n+1}$ is the spin bundle of $S^{2n+1}$, the total space of which is isomorphic to $Spin(2n+1)$. Thus if $c,\hat{c}$ are integers with $c \hat{c} = \frac{1}{2}(n+1)$, then with respect to the principal $Spin(2n+1)$-bundle $P$, the lens spaces $\mathbb{Z}_c \backslash S^{2n+1}$, $\mathbb{Z}_{\hat{c}} \backslash S^{2n+1}$ are heterotic T-duals. 

We examine this example in more detail showing that the lens spaces $\mathbb{Z}_c \backslash S^{2n+1}$ are naturally solutions to the heterotic equations of motion (see Section \ref{secheof}). The action of scalar multiplication by unit complex numbers on $\mathbb{C}^{n+1}$ defines a homomorphism $\phi \colon U(1) \to SO(2n+2)$. If $n$ is odd $\phi$ induces a trivial homomorphism on fundamental groups, so that we may lift to a homomorphism $\tilde{\phi} \colon U(1) \to Spin(2n+2)$. Using $\tilde{\phi}$ the actions of the cyclic groups $\mathbb{Z}_c,\mathbb{Z}_{\hat{c}}$ on $S^{2n+2}$ lift to the spin bundle. From this we can argue that the pullback of $P$ to $\mathbb{Z}_c \backslash S^{2n+1}$ is the spin bundle of $\mathbb{Z}_c \backslash S^{2n+1}$. Since the group $\mathbb{Z}_c$ lifts to a subgroup of $Spin(2n+2)$ we have that the pullback of $P$ to $\mathbb{Z}_c \backslash S^{2n+1}$ can be identified with $\mathbb{Z}_c \backslash Spin(2n+2)$. 

Consider the affine connection $\nabla^L$ on $Spin(2n+2)$ given by left translation. This is a flat connection which preserves the metric on $Spin(2n+2)$ defined by the Killing form and has torsion given by the Cartan $3$-form. By left invariance $\nabla^L$ descends to a flat connection with torsion on the quotient $\mathbb{Z}_c \backslash Spin(2n+2)$. Since this connection is flat it determines a solution to the type II equations of motion (\ref{eint}). Furthermore from Proposition \ref{einstlift} it follows that the lens space $\mathbb{Z}_c \backslash S^{2n+1}$ equipped with the principal $Spin(2n+1)$-bundle $\mathbb{Z}_c \backslash Spin(2n+2)$ is a solution to the heterotic equations of motion (\ref{heint}). In this example we see that heterotic T-duality sends a solution of the heterotic equations of motion on the lens space $\mathbb{Z}_c \backslash Spin(2n+2)$ to another solution on a different lens space $\mathbb{Z}_{\hat{c}} \backslash Spin(2n+2)$. In Section \ref{sechetd} we will show this is a general feature of heterotic T-duality.

\subsection{Further homogeneous examples}

Let $G$ be a compact, simple, simply connected Lie group and $\omega_3(G)$ the generator of $H^3(G , \mathbb{Z}) = \mathbb{Z}$. Given a closed subgroup $\phi \colon H \to G$, where $H$ is also compact, simple and simply connected, we may view $G \to G/H$ as a principal $H$-bundle. Let $\omega_3(H)$ be the generator of $H^3(H,\mathbb{Z})$. Then $i^*(\omega_3(G)) = j_\phi \omega_3(H)$, where $j_\phi$ is the Dynkin index of $\phi$. Thus $\omega_3(G)$ is a string class for the principal $H$-bundle $G \to G/H$ precisely when the subgroup $H \subset G$ has Dynkin index $1$. For example the following inclusions have Dynkin index $1$: $SU(n) \subset SU(n+1)$, $Spin(n) \subset Spin(n+1)$, $Sp(n) \subset Sp(n+1)$, $SU(3) \subset G_2$, $G_2 \subset Spin(7)$, $Spin(9) \subset F_4$, $F_4 \subset E_6$, $E_6 \subset E_7$, $E_7 \subset E_8$. Many more examples can be obtained by composing these inclusions. We will consider here the case $Sp(n) \subset Sp(n+m)$.

Let $\mathbb{H}^k$ denote $k$-dimensional quaternionic space. Under the decomposition $\mathbb{H}^{n+m} = \mathbb{H}^n \oplus \mathbb{H}^m$ we obtain an inclusion $Sp(n) \times Sp(m) \subset Sp(n+m)$. Let $U(1) \subset Sp(n+m)$ be the subgroup corresponding to the diagonal inclusion $U(1) \to T^m$ of $U(1)$ into a maximal torus in the $Sp(m)$ factor. This corresponds to a decomposition $\mathbb{H}^{n+m} = \mathbb{H}^n \oplus \mathbb{C}^m \oplus {\mathbb{C}^m}^*$. Set $M = Sp(n+m)/Sp(n) \times U(1)$ and $X = Sp(n+m)/Sp(n)$. Then $X$ is a principal circle bundle over $M$ with Chern class $\omega$, which by the Leray-Serre spectral sequence is found to be a generator of $H^2(M,\mathbb{Z}) = \mathbb{Z}$. Associated to the fundamental representation of the $Sp(n)$-factor is a rank $2n$ complex vector bundle $E$ on $M$ and associated to the $U(1)$-factor is a complex line bundle $L$. We then have $\mathbb{C}^{2n+2m} = E \oplus (L \oplus L^*) \otimes \mathbb{C}^m$. Moreover $L$ is the line bundle associated to $X$, so $c_1(L) = \omega$ and it follows that $c_2(E) = m\omega^2$. Let $P_0 \to M$ be the principal $Sp(n)$-bundle corresponding to $E \to M$. So $P_0 = Sp(n+m)/U(1)$. Since the characteristic class $c_2$ corresponds to the generator of $H^4(BSp(n),\mathbb{Z})$, we are looking for solutions to $c \hat{c} = m$. Let $X_c \to M$ be the principal circle bundle over $M$ with Chern class $c \omega$. Then it follows that $X_c = Sp(n+m)/ Sp(n) \times \mathbb{Z}_c$ and the pullback of $P_0$ to $X_c$ is given by $Sp(n+m)/ \mathbb{Z}_c$. Arguing as in Section \ref{sechdl}, we obtain on $X_c$ a solution of the heterotic equations of motion. This gives many examples of T-dual pairs satisfying the heterotic equations.

\subsection{A universal construction}

We again take $G$ to be a compact, simple, simply connected Lie group. Let $X$ be the principal $K(\mathbb{Z},3)$-fibration over $K(\mathbb{Z},2) \times K(\mathbb{Z},2) \times BG$ classified by $c \smallsmile \hat{c} - p$, where $c,\hat{c}$ are the universal Chern classes for the two $K(\mathbb{Z},2)$-factors and $p$ is the characteristic class generating $H^4(BG,\mathbb{Z}) = \mathbb{Z}$. The universal $G$-bundle $EG \to BG$ pulls back to a principal $G$-bundle $P_0 \to X$ with characteristic class the pullback of $p$ to $H^4(X,\mathbb{Z})$. Then since $X$ is simply connected and $p \in H^4(X,\mathbb{Z})$ is non-torsion, we are in the setting of Section \ref{secgenex}. On $X$ we have $c \smallsmile \hat{c} = p$, so this gives a pair of heterotic T-dual spaces which may be considered as a universal example. Given any homotopy class of maps $Y \to X$ we may pullback to produce a heterotic T-dual pair over $Y$. Conversely given a space $Y$, a principal $G$-bundle $P$ over $Y$ with characteristic class $p \in H^4(Y,\mathbb{Z})$ and Chern classes $c,\hat{c} \in H^2(Y,\mathbb{Z})$ such that $c \smallsmile \hat{c} = p$, then the classifying map $Y \to K(\mathbb{Z},2) \times K(\mathbb{Z},2) \times BG$ classifying $(c,\hat{c},P)$ lifts to a map $Y \to X$.

\newpage
\section{Generalised metrics and reduction}\label{secgmr}


\subsection{Generalised metrics}\label{secgenmet}

\begin{definition}
Let $E$ be a transitive Courant algebroid on $M$ where $M$ has dimension $n$. We define a {\em generalised metric} on $E$ to be a rank $n$ negative definite subbundle $E_- \subseteq E$. Following \cite{garc}, we say the generalised metric is {\em admissible} if the restriction $\rho |_{E_-}$ of the anchor of $E$ to $E_-$ is a bundle isomorphism $\rho |_{E_-} \colon E_- \to TM$.
\end{definition}
Given a generalised metric $E_- \subseteq E$ we let $E_+ = E_-^\perp$ be the orthogonal complement with respect to the pairing on $E$, so we have an orthogonal decomposition $E = E_- \oplus E_+$. Admissibility is equivalent to the condition $\rho^*(T^*M) \cap E_+ = \{0\}$.\\

An admissible generalised metric determines a section $s = (\rho |_{E_-})^{-1} \colon TM \to E$ of the anchor such that the image $s(TM) = E_-$ is negative definite with respect to the pairing $\langle \, \, , \, \, \rangle$ on $E$. An admissible generalised metric then defines a Riemannian metric $g$ on $M$ by the relation 
\begin{equation}\label{induced}
g(X,Y) = -\langle s(X) , s(Y) \rangle.
\end{equation}
It is possible to define an indefinite signature notion of generalised metrics. For this one takes $E_- \subseteq E$ to be a rank $n$ subbundle which is non-degenerate with respect to the pairing on $E$. We may again say that $E_-$ is admissible if $\rho |_{E_-} \colon E_- \to TM$ is an isomorphism and in this case we obtain an indefinite signature metric $g$ on $M$ by (\ref{induced}).\\

In the case where $E$ is an exact Courant algebroid on $M$ a generalised metric on $E$ is equivalent to an orthogonal decomposition $E = E_- \oplus E_+$ into maximal positive and negative  definite subspaces and is automatically admissible. Identifying $E$ with $TM \oplus T^*M$ we have that there exists a Riemannian metric $g$ and a $2$-form $B$ on $M$ such that
\begin{equation*}
\begin{aligned}
E_+ &= \{ X + gX + BX \; | \; X \in TM \} \\
E_- &=  \{ X - gX + BX \; | \; X \in TM \} .
\end{aligned}
\end{equation*}
Moreover the metric $g$ coincides with the induced metric (\ref{induced}).\\

Next we consider generalised metrics on heterotic Courant algebroids. Let $\mathfrak{g}$ be the Lie algebra of $G$ and suppose that $c = \langle \, \, , \, \, \rangle$ is a non-degenerate invariant bilinear form on $\mathfrak{g}$. Let $\sigma \colon P \to X$ be a principal $G$-bundle on $X$ with Atiyah algebroid $\mathcal{A}$. The invariant pairing $c$ on $\mathfrak{g}$ gives $\mathcal{A}$ the structure of a quadratic Lie algebroid. Let $\mathcal{H}$ be a heterotic Courant algebroid associated to $\mathcal{A}$. Thus $\mathcal{H}$ is a transitive Courant algebroid on $X$ such that the associated quadratic Lie algebroid $\mathcal{H}/T^*X$ is $\mathcal{A}$. Let $\rho \colon \mathcal{H} \to TX$ be the anchor and set $\mathcal{K} = {\rm Ker}(\rho)$. Then we have exact sequences (\ref{exseq1}), (\ref{exseq2}). Recall that we may split the sequences so that as a vector bundle $\mathcal{H} = TX \oplus \mathfrak{g}_P \oplus T^*X$ with anchor given by (\ref{equanchor}) and pairing by (\ref{equpairing}). Suppose $\mathcal{H}_- \subseteq \mathcal{H}$ is an admissible generalised metric. By admissibillity $\mathcal{H}_-$ can be expressed as the graph of a map $TX \to \mathfrak{g}_P \oplus T^*X$ and it follows that $\mathcal{H}_+,\mathcal{H}_-$ have the form
\begin{equation}\label{hetgenmet1}
\begin{aligned}
\mathcal{H}_+ &= \{ X + a + gX + BX + \langle 2a + AX , A \rangle  \; | \; X \in TX, \; a \in \mathfrak{g}_P \} \\
\mathcal{H}_- &=  \{ X -AX - gX + BX - \langle AX , A \rangle \; | \; X \in TX \}.
\end{aligned}
\end{equation}
where $A \in \Omega^1(X, \mathfrak{g}_P)$, $B \in \Omega^2(X)$ and $g$ is the induced Riemannian metric on $X$. Conversely such a triple $(g,A,B)$ defines a generalised metric by (\ref{hetgenmet1}). If the pairing $c( \, \, , \, \, )$ on $\mathfrak{g}$ is positive definite then $\mathcal{H}_+$ is positive definite, so in this case a generalised metric is a decomposition into maximal positive and negative definite subspaces and is automatically admissible.\\

Given a generalised metric $\mathcal{H}_- \subseteq \mathcal{H}$ define an endomorphism $\mathcal{G} \colon \mathcal{H} \to \mathcal{H}$ to be multiplication by $\pm 1$ on $\mathcal{H}_{\pm}$. We obtain a symmetric non-degenerate pairing $\mathcal{G}( \, \, , \, \, )$ on $\mathcal{H}$ by setting $\mathcal{G}( a , b ) = \langle \mathcal{G}a , b \rangle$. Using $\mathcal{G}( \, \, , \, \, )$ we may split exact sequences (\ref{exseq1}) and (\ref{exseq2}) to obtain a decomposition $\mathcal{H} = TX \oplus \mathfrak{g}_P \oplus T^*X$ with pairing and anchor as usual. Define $\pi_{\mathfrak{g}_P} \colon \mathcal{H} \to \mathfrak{g}_P$ and $\pi_{T^*X} \colon \mathcal{H} \to T^*X$ as the projections to the second and third factors. With respect to this splitting we find that $\mathcal{H}_-$ is given as in (\ref{hetgenmet1}) with $A = 0$, $B=0$. The splitting of $\mathcal{H}$ induced by $\mathcal{G}$ in this manner defines a $G$-connection $\nabla$ and a $3$-form $H$ on $X$ as follows:
\begin{equation}\label{nablah}
\begin{aligned}
\nabla_X a &= \pi_{\mathfrak{g}_P} \left( [ X , a ]_{\mathcal{H}} \right), \\
i_Y i_X H &= \pi_{T^*X}([X,Y]_{\mathcal{H}}).
\end{aligned}
\end{equation}
Let $F$ denote the curvature of $\nabla$. As usual we have the relation $dH = c(F,F)$. We have thus shown:
\begin{proposition}\label{genmetclass}
Let $\mathcal{H}$ be a heterotic Courant algebroid associated to the Atiyah algebroid $\mathcal{A}$ and $\mathcal{H}_- \subseteq \mathcal{H}$ an admissible generalised metric. Let $g$ be the Riemannian metric induced by $\mathcal{H}_-$. There exists a $G$-connection $\nabla$ with curvature $F$, a $3$-form $H$ with $dH = \langle F,F \rangle$ and a splitting $s$ of $\mathcal{H}$ such that in the decomposition (\ref{equdecomh}) given by $s$, the Dorfman bracket is given by (\ref{bracket}) and $\mathcal{H}_-$ is given by
\begin{equation*}
\mathcal{H}_- =  \{ X - gX \; | \; X \in TX \}.
\end{equation*}
Moreover the triple $(g,\nabla,H)$ is uniquely determined by the pair $(\mathcal{H},\mathcal{H}_-)$.
\end{proposition}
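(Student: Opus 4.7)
The plan is to follow the construction sketched in the paragraph preceding the statement: use the generalised metric to produce a canonical isotropic splitting of $\mathcal{H}$, read off the decomposition (\ref{equdecomh}), and then extract $(\nabla, H)$ from the Dorfman bracket via (\ref{nablah}). The real content is checking that each step is well-defined and that the $\mathcal{G}$-induced splitting is precisely the one in which $\mathcal{H}_-$ takes its simplest form.

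First I would construct the splitting. The involution $\mathcal{G}$ satisfies $\mathcal{G}^2 = \operatorname{id}$ and $\langle \mathcal{G}\cdot, \mathcal{G}\cdot\rangle = \langle \cdot, \cdot\rangle$, so $\mathcal{G}(T^*X) \subseteq \mathcal{H}$ is isotropic. I would define the lift of $TX$ by $s(TX) := \mathcal{G}(T^*X)$. Admissibility of $\mathcal{H}_-$, which is equivalent to $\rho^*(T^*X) \cap \mathcal{H}_+ = \{0\}$, is precisely what is needed to show $\rho|_{s(TX)} \colon s(TX) \to TX$ is an isomorphism: decomposing $\xi \in T^*X$ as $\xi = \xi_- + \xi_+$ with $\xi_\pm \in \mathcal{H}_\pm$ gives $\mathcal{G}\xi = \xi_+ - \xi_-$, and admissibility prevents $\mathcal{G}\xi \in \ker\rho$ unless $\xi = 0$. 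The $\mathfrak{g}_P$-lift is then forced as the $\langle\cdot,\cdot\rangle$-orthogonal complement of $s(TX) \oplus T^*X$ inside $\mathcal{K}$, which exists and has the correct rank by non-degeneracy of the pairing on $\mathcal{H}$.

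Next, in the resulting decomposition $\mathcal{H} = TX \oplus \mathfrak{g}_P \oplus T^*X$, the admissible generalised metric necessarily takes the general form (\ref{hetgenmet1}) for some triple $(g, A, B)$, and I would verify that the splitting arising from $\mathcal{G}$ forces $A = 0$ and $B = 0$. The argument is to trace a typical element of $\mathcal{H}_-$, written as $X - AX - gX + BX - \langle AX, A\rangle$ in the notation of (\ref{hetgenmet1}), and impose the two characterizations of the splitting: on the one hand that $s(X) = \mathcal{G}(T^*X)$-valued, on the other that $\mathcal{G}$ acts as $-\mathrm{id}$ on this vector. The resulting algebraic relations force $A = 0$ and $B = 0$, so that $\mathcal{H}_- = \{X - gX : X \in TX\}$ with $g$ the induced metric (\ref{induced}).

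Once the splitting is in hand, Proposition \ref{hetca} applied to the Dorfman bracket on $\mathcal{H}$ supplies a unique pair $(\nabla, H)$ such that the bracket takes the form (\ref{bracket}), and unwinding that formula identifies $\nabla$ and $H$ with the quantities defined by (\ref{nablah}); the identity $dH = \langle F, F\rangle$ is part of the conclusion of the same proposition. Uniqueness of $(g,\nabla,H)$ is then automatic: $g$ is determined by $\mathcal{H}_-$ via (\ref{induced}); the splitting $s$ is determined by $\mathcal{G}$, hence by $\mathcal{H}_-$; and $(\nabla, H)$ is determined by $s$ via (\ref{nablah}). The main obstacle I anticipate is the second step, namely verifying that the splitting from $\mathcal{G}$ really matches the one in which $\mathcal{H}_-$ has the canonical form $\{X - gX\}$, as this requires carefully matching (\ref{hetgenmet1}) against the definition of $\mathcal{G}$ on $\mathcal{H}_\pm$; everything else reduces to a direct appeal to earlier results.
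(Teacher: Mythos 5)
Your proposal is correct and follows essentially the same route as the paper: the paper's proof is precisely the paragraph preceding the statement, where the splitting is obtained by taking $\mathcal{G}(\,\,,\,\,)$-orthogonal complements in the exact sequences (\ref{exseq1}), (\ref{exseq2}) — which is exactly your $s(TX)=\mathcal{G}(T^*X)$ together with $s(TX)^\perp\cap\mathcal{K}$ — after which $\mathcal{H}_-$ has $A=B=0$ and $(\nabla,H)$ is read off via (\ref{nablah}) and Proposition \ref{hetca}. Your write-up simply makes explicit the details (the role of admissibility, the verification that $A=B=0$, and the uniqueness of the splitting) that the paper leaves as routine checks.
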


Proposition \ref{genmetclass} shows that a generalised metric on $\mathcal{H}$ determines a triple $(g,\nabla,H)$ consisting of a Riemannian metric $g$, a $G$-connection $\nabla$ with curvature $F$ and a $3$-form $H$ with $dH = \langle F,F \rangle$. This is precisely the bosonic field content of the low energy limit of heterotic string theory, or of gauged supergravity. Conversely, such a triple $(g,\nabla,H)$ determines a Courant algebroid structure on $\mathcal{H} = TX \oplus \mathfrak{g}_P \oplus T^*X$.\\

We have established that with respect to a fixed decomposition $\mathcal{H} = TX \oplus \mathfrak{g}_P \oplus T^*X$, a generalised metric corresponds to a triple $(g,A,B)$ according to (\ref{hetgenmet1}). From Proposition \ref{hetca} such a splitting determines a $G$-connection $\nabla^0$ and $3$-form $H^0$ according to (\ref{bracket}). On the other hand we have just seen in Proposition \ref{genmetclass} that the generalised metric corresponding to $(g,A,B)$ determines a different splitting of $\mathcal{H}$ and hence a different $G$-connection $\nabla$ and $3$-form $H$ as in (\ref{nablah}). Using (\ref{equbshift}),(\ref{equashift}) we find $\nabla = \nabla^0 + A$, $H = H^0 + dB + 2 \langle A \wedge F^0 \rangle + \langle A , d_{\nabla^0} A \rangle + \frac{1}{3} \langle A \wedge [ A \wedge A ] \rangle$, where $F^0$ is the curvature of $\nabla^0$.


\subsection{Generalised metrics and reduction}\label{genmetred}

Let $\sigma \colon P \to X$ be a principal $G$ bundle and $E$ an exact Courant algebroid on $P$. Suppose that $E_- \subseteq E$ is a $G$-invariant generalised metric on $E$. Then we obtain a $3$-form $H$ on $P$ and a Riemannian metric $g$ such that $E = TP \oplus T^*P$ with $H$-twisted bracket and $E_- = \{ X + gX \; | \; X \in TP \}$. Now suppose that $\xi$ is an extended action which we take to be of the form $\xi = -cA$ for a connection $A$ on $P$. Suppose at first that $c$ is positive definite. We have seen that such a $\xi$ is an extended action if and only if $H$ is of the form $H = \sigma^*(H^0) - CS_3(A)$ with $H^0$ a $3$-form on $X$. The extended action $\xi$ gives a map $\xi \colon\mathfrak{g} \to \Gamma( T^*P )$ such that the image of $\xi$ defines a subbundle $K \subset E$. Moreover the generalised metric defines a map $s \colon TP \to E$ given by the inverse of $\rho |_{E_-} \colon E_- \to TP$. We say that $(E_- , \xi)$ are compatible if the following diagram commutes:
\begin{equation*}
\xymatrix{
\mathfrak{g} \ar[r]^\xi \ar[d] & K \ar[d] \\
TP \ar[r]^s & E.
}
\end{equation*}
Using the metric $g$ on $TP$ we obtain an orthogonal splitting $TP = \sigma^*(TX) \oplus \mathfrak{g}$ and on composing with $s \colon TP \to E$ we obtain a map $s' \colon \sigma^*(TX) \to E$. If $(E_-,\xi)$ are compatible it follows that the image of $s'$ is contained in the complement $K^\perp$. Factoring by the $G$-action we obtain a map $s' \colon TX \to K^\perp/G = \mathcal{H}$, where $\mathcal{H}$ is the heterotic Courant algebroid obtained as the reduction of the extended action. It follows that $s' \colon TX \to \mathcal{H}$ defines an admissible generalised metric $\mathcal{H}_-$ on $\mathcal{H}$. It is clear also that $(E_-,\xi)$ are compatible if and only if $g$ has the form $g = \sigma^*(g^0) + \langle A , A \rangle$ for a Riemannian metric $g^0$ on $TX$. Then $(\mathcal{H}, \mathcal{H}_-)$ correspond to the triple $(g^0 , A , H^0)$. Conversely such a triple $(g^0,A,H^0)$ can be lifted to a generalised metric on $TP$ corresponding to the metric $g = \sigma^*(g^0) + \langle A , A \rangle$ and $3$-
form $H = \sigma^*(H^0) - CS_3(A)$. In this way we see that admissible generalised metrics on $\mathcal{H}$ are obtained by reduction of generalised metrics on the corresponding exact Courant algebroid on $P$.

The above results extend to the case where $c$ is indefinite. The only modification is to note that the metric $g = \sigma^*(g^0) + c( A , A )$ has indefinite signature. As discussed in Section \ref{secgenmet} the notion of generalised metric extends to the indefinite signature case without difficulty. Thus we may continue to interpret generalised metrics on heterotic Courant algebroids as being obtained through reduction, even when $c$ has indefinite signature.


\subsection{The global Buscher rules}

In order to understand T-duality of generalised metrics on heterotic Courant algebroids, we first need to consider the exact case. Thus suppose we have rank $n$ torus bundles $\pi \colon X \to M$, $\hat{\pi} \colon \hat{X} \to M$ which are T-dual in the ordinary sense. In particular there are classes $h \in H^3(X , \mathbb{Z})$, $\hat{h} \in H^3(\hat{X} , \mathbb{Z})$ which are the Dixmier-Douady classes for bundle gerbes on $X,\hat{X}$. Let $h_{\mathbb{R}}, \hat{h}_{\mathbb{R}}$ be the images of $h,\hat{h}$ in real cohomology and $E,\hat{E}$ the exact Courant algebroids associated to $h_{\mathbb{R}}, \hat{h}_{\mathbb{R}}$. Define $T^n, \hat{T}^n, \mathfrak{t}^n, (\mathfrak{t}^n)^*$ as in Section \ref{sechettd}. We consider $X$ as a principal $T^n$-bundle and $\hat{X}$ as a principal $\hat{T}^n$-bundle. Also fix a basis $t_1 , \dots , t_n$ for $\mathfrak{t}^n$ and dual basis $t^1 , \dots , t^n$.\\

Let $E_- \subseteq E$ be a $T^n$-invariant generalised metric on $E$ and suppose $(E,E_-)$ corresponds to the pair $(g,H)$. Then $g$ determines a $T^n$-connection $\theta = \theta^i t_i$ and a metric $\overline{g}$ on $M$ such that $g,H$ have the form $g = \overline{g} + g_{ij} \theta^i \theta^j$, $H = \overline{H} + H_i \theta^i + \tfrac{1}{2} H_{ij} \theta^{ij}$ where we have used the fact that $H$ is a closed $T^n$-invariant form representing the trivial cohomology class on the fibres of $X \to M$ to deduce that there are no $\theta^{ijk}$-terms in the expansion of $H$.

\begin{proposition}\label{globalb}
There exists an isomorphism $\phi \colon E/T^n \to \hat{E}/\hat{T}^n$ of Courant algebroids such that $\hat{E}_- = \phi(E_-)$ is an invariant generalised metric on $\hat{E}$ corresponding to a pair $(\hat{g},\hat{H})$ such that $(g,H),(\hat{g},\hat{H})$ are related as follows. There exists a $\hat{T}^n$-connection $\hat{\theta} = \hat{\theta}_i t^i$ on $\hat{X}$, with curvature $\hat{F}^i t_i$ such that $\hat{g} = \overline{g} + \hat{g}^{ij} \hat{\theta}_i \hat{\theta}_j$, $\hat{H} = \overline{H} + \hat{H}^i \hat{\theta}_i + \tfrac{1}{2} \hat{H}^{ij} \hat{\theta}_{ij}$ and there exists sections $B = \tfrac{1}{2}B_{ij} t^i \wedge t^j \in \Gamma( M , \wedge^2 (\mathfrak{t}^n)^*), \hat{B} = \tfrac{1}{2} \hat{B}^{ij} t_i \wedge t_j \in \Gamma( M , \wedge^2 \mathfrak{t}^n)$ satisfying the relations
\begin{equation*}
\begin{aligned}
\hat{g}^{ij} + \hat{B}^{ij} &= (g_{ij} + B_{ij} )^{-1} \\
H_i &= \hat{F}_i - B_{ij}\hat{F}^j \\
\hat{H}^i &= F^i - \hat{B}^{ij} \hat{F}_j \\
H_{ij} &= dB_{ij} \\
\hat{H}^{ij} &= d\hat{B}^{ij}.
\end{aligned}
\end{equation*}
\end{proposition}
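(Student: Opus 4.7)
The plan is to take $\phi$ to be the isomorphism of Courant algebroids from Proposition \ref{propphiiso}, defined by formula (\ref{equphi}) in the decompositions (\ref{equidents}) coming from the connections $\theta,\hat\theta$, and then derive the Buscher relations by computing $\phi(E_-)$ explicitly.

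First, I would fix $\theta$ to be the unique $T^n$-connection whose horizontal distribution is $g$-orthogonal to the vertical; this yields the decomposition $g=\overline g + g_{ij}\theta^i\theta^j$ with no cross terms, and the orthogonal complement becomes the natural horizontal bundle for the dual. The generalised metric $E_-\subseteq E=TX\oplus T^*X$ is then described as $E_-=\{Y+(-g+B)Y:Y\in TX\}$ for some invariant $2$-form $B$ encoding how the chosen isotropic splitting of $E$ differs from the metric-compatible one. Decomposing $B$ under $\theta$ as $B = \pi^*B^0 + B^1_i\wedge\theta^i+\tfrac{1}{2}B_{ij}\theta^{ij}$ and writing an invariant vector $Y=X^H+a^i\partial_i$ (horizontal lift plus fundamental vertical), the defining condition of $E_-$ translates into an explicit parametrisation of $E_-/T^n$ as a graph inside $TM\oplus\mathfrak t^n\oplus(\mathfrak t^n)^*\oplus T^*M$.

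Second, I would apply $\phi$ according to (\ref{equphi}), which negates and swaps the middle two factors. Reading $\phi(E_-)$ back as a graph in the analogous decomposition of $\hat E/\hat T^n$ identifies a Riemannian metric $\hat g$ on $\hat X$ (with horizontal distribution encoded by a dual connection $\hat\theta$) together with an invariant $2$-form. The central fibrewise computation is a linear-algebra inversion: if $E_{ij}:=g_{ij}+B_{ij}$ assembles the fibre data of $g$ and $B$, then the swap implemented by $\phi$ on the vertical vector and covector components forces $E_{ij}\mapsto (E_{ij})^{-1}$, producing the relation $\hat g^{ij}+\hat B^{ij}=(g_{ij}+B_{ij})^{-1}$.

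Third, to obtain the relations involving $H_i,\hat H^i,H_{ij},\hat H^{ij}$, I would use the T-duality identity $\hat\pi^*\hat H - \pi^*H = d\langle\theta\wedge\hat\theta\rangle$ on $X\times_M\hat X$ (the differential-form content of Proposition \ref{proptdt}), expand both $H$ and $\hat H$ in the $(\theta,\hat\theta)$ bi-decomposition, and equate coefficients of $\theta^i$, $\hat\theta_j$, $\theta^{ij}$, $\hat\theta_{ij}$. Combined with $dH=0$, $d\hat H=0$ and the curvature identities $F^i=d\theta^i$, $\hat F_i=d\hat\theta_i$, this matching yields the exactness statements $H_{ij}=dB_{ij}$ and $\hat H^{ij}=d\hat B^{ij}$, together with the mixing relations $H_i=\hat F_i-B_{ij}\hat F^j$ and $\hat H^i=F^i-\hat B^{ij}\hat F_j$.

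The main obstacle I anticipate is the bookkeeping: the isotropic splitting of $E$ used to write the $H$-twisted bracket does not coincide with the splitting in which $E_-$ is the pure graph $\{Y-gY\}$, so the base and mixed components $B^0,B^1_i$ of the $B$-field enter the computation even though they disappear from the statement. Verifying that they influence only $\overline H$ and the pure fibrewise component $B_{ij}$, rather than the Buscher rule itself, requires a careful cancellation; keeping track of signs and raised/lowered indices throughout the fibrewise matrix inversion is the principal technical hurdle.
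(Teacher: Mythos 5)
Your strategy is the paper's strategy: use the T-duality isomorphism $\phi$ of Proposition \ref{propphiiso} in the splittings determined by $\theta,\hat{\theta}$, write $E_-$ as the graph of $-g+B'$, reduce to the case where only the purely fibrewise component $B_{ij}$ survives, and read off the Buscher rules from the fibrewise inversion of $g_{ij}+B_{ij}$ together with the normal forms of the representatives of $h,\hat h$. The one place where your anticipated resolution of the ``main obstacle'' is not quite right is the fate of the mixed component $B^1_i$ of the $B$-field: it does \emph{not} get absorbed into $\overline H$ and $B_{ij}$. If you keep $\hat\theta$ fixed and try to push $B^1_i$ there, the image $\phi(E_-)$ will not be a graph of the required form over $T\hat X$ split by that $\hat\theta$. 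What the paper does instead is start from an arbitrary dual connection $\hat\theta^0$, and then \emph{redefine} the dual connection by $\hat\theta_i=\hat\theta^0_i+B^1_i$ (simultaneously replacing $\overline H^0$ by $\overline H=\overline H^0-B^1_i\wedge F^i$ and applying the shift $e^{-B^1_i\wedge\theta^i}$); this is precisely why the proposition only asserts the \emph{existence} of a suitable $\hat\theta$ rather than fixing it in advance, and it also removes the mild circularity in your first step, where $\phi$ is built from a connection $\hat\theta$ that is only determined after the decomposition of $B'$ is known. With that correction the rest of your outline — killing $\overline B'$ by a $B$-shift, the matrix inversion giving $\hat g^{ij}+\hat B^{ij}=(g_{ij}+B_{ij})^{-1}$, and extracting the $H_i,\hat H^i,H_{ij},\hat H^{ij}$ relations (which you propose to do via the correspondence-space identity $\hat H-H=d\langle\theta\wedge\hat\theta\rangle$, while the paper reads them off directly from $H=\overline H+\hat F_i\wedge\theta^i+d(\tfrac12 B_{ij}\theta^{ij})$ and its dual; both work once the normal form is in place) — goes through as in the paper.
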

\begin{proof}
Choose a connection $\hat{\theta}^0_i t^i$ on $\hat{X}$ with curvature $\hat{F}^0_i t^i$. From \cite{bar} we have that there exists a $3$-form $\overline{H}^0$ such that $h_\mathbb{R}$ is represented by $H' = \overline{H}^0 + \hat{F}^0_i \wedge \theta^i$ and $\hat{h}_\mathbb{R}$ is represented by $\hat{H}' = \overline{H}^0 + F^i \hat{\theta}^0_i$. Thus $E$ is isomorphic to $TX \oplus T^*X$ with $H'$-twisted bracket. With respect to this decomposition of $E$ we have that $E_- = \{ X - gX + B'X \; | \; X \in TX \}$ for some invariant $2$-form $B'$. We decompose $B'$ with respect to $\theta$ to obtain $B' = \overline{B}' + B_i \wedge \theta^i + \tfrac{1}{2} B_{ij} \theta^{ij}$. On performing a $B$-shift by $e^{-\overline{B}'}$ and replacing $H^0$ by $H^0 + d\overline{B}'$, we may assume $\overline{B}' = 0$. Let $\hat{\theta}$ be given by $\hat{\theta}_i = \hat{\theta}^0_i + B_i$ and let $\hat{F}_i t^i$ be the curvature of $\hat{\theta}$. Then $\hat{h}_\mathbb{R}$ is represented by $\overline{H}^0 + F_i \hat{\theta}^0_i = (\overline{H}^0 - B_i \wedge F^i) + F^i \wedge \hat{\theta}_i = \overline{H} + F^i \wedge \hat{\theta}_i$ where $\overline{H} = \overline{H}^0 - B_i \wedge F^i$. Composing with $e^{-B_i \theta^i}$ we may replace $H'$ by $H' + d( B_i \theta^i) = \overline{H} + \hat{F}^i \theta_i$ and $E_-$ is then given as the graph of $-g + \tfrac{1}{2}B_{ij} \theta^{ij}$. 

Observe that $d \overline{H} + F^i \wedge \hat{F}_i = 0$. It follows from \cite{bar} that there is an isomorphism $\phi \colon E/T^n \to \hat{E}/ \hat{T}^n$ defined as follows. We write $E = TX \oplus T^*X$ with $(\overline{H} + \hat{F}_i \wedge \theta^i)$-twisted bracket and $\hat{E} = T\hat{X} \oplus T^*\hat{X}$ with $(\overline{H} + F^i \wedge \hat{\theta}_i)$-twisted bracket. Then using $\theta,\hat{\theta}$ to split $TX, T\hat{X}$ we have $E/T^n = TM \oplus \mathfrak{t}^n \oplus (\mathfrak{t}^n)^* \oplus T^*M$, $\hat{E}/\hat{T}^n = TM \oplus (\mathfrak{t}^n)^* \oplus \mathfrak{t}^n \oplus T^*M$ and $\phi$ is given by $\phi( X , u , v , \xi ) = (X , -v , -u , \xi)$. Let $\hat{E}_- = \phi(E_-)$. It follows that $\hat{E}_-$ is given by the graph of $-\hat{g} + \tfrac{1}{2} \hat{B}^{ij} \hat{\theta}_{ij}$ where $\hat{g} = \overline{g} + \hat{g}^{ij} \hat{\theta}_i \hat{\theta}_j$ and $(\hat{g}^{ij} + \hat{B}^{ij})$ is the inverse matrix of $(g_{ij} + B_{ij})$. The proposition follows by noting that $H = \overline{H} + \hat{F}_i \wedge \theta^i + d( \tfrac{1}{2} B_{ij} \theta^{ij} )$, $\hat{H} = \overline{H} + F^i \wedge \hat{\theta}_i + d( \tfrac{1}{2} \hat{B}^{ij} \hat{\theta}_{ij} )$.
\end{proof}
We say that pairs $(g,H) , (\hat{g} , \hat{H})$ related as in Proposition \ref{globalb} satisfy the {\em global Buscher rules}.


\subsection{The global heterotic Buscher rules}

Suppose we are in the setting of heterotic T-duality so we have spaces forming the commutative diagram (\ref{tdiagram}) and heterotic Courant algebroids $\mathcal{H},\hat{\mathcal{H}}$ on $X,\hat{X}$. Let $\mathcal{H}_- \subseteq \mathcal{H}$ be a $T^n$-invariant admissible generalised metric on $\mathcal{H}$ and suppose $(\mathcal{H},\mathcal{H}_-)$ corresponds to the triple $(g,A,H)$. Then $g$ determines a $T^n$-connection $\theta = \theta^i t_i$ a metric $\overline{g}$ on $M$ and a $G$-connection $A^0$ on $P_0$ such that $(g,A,H)$ have the form $g = \overline{g} + g_{ij} \theta^i \theta^j$, $A = A^0 + A_i \theta^i$, $H = \overline{H} + H_i \theta^i + \tfrac{1}{2} H_{ij} \theta^{ij} + \tfrac{1}{3!}H_{ijk}$. Consider the symmetric matrix $h_{ij} = g_{ij} + \langle A_i , A_j \rangle$. We say that the generalised metric is {\em T-dualisable} if $h_{ij}$ is positive definite, in particular this implies that for any skew-symmetric matrix $B_{ij}$, the matrix $h_{ij} + B_{ij}$ is invertible.

\begin{proposition}\label{globalhb}
Suppose $\mathcal{H}_-$ is T-dualisable. There exists an isomorphism $\phi \colon \mathcal{H}/T^n \to \hat{\mathcal{H}}/\hat{T}^n$ of Courant algebroids such that $\hat{\mathcal{H}}_- = \phi(\mathcal{H}_-)$ is an invariant generalised metric on $\hat{\mathcal{H}}$ corresponding to a triple $(\hat{g},\hat{A},\hat{H})$ related to $(g,A,H)$ as follows. There exists a $\hat{T}^n$-connection $\hat{\theta} = \hat{\theta}_i t^i$ on $\hat{X}$, with curvature $\hat{F}^i t_i$ such that $\hat{g} = \overline{g} + \hat{g}^{ij} \hat{\theta}_i \hat{\theta}_j$, $\hat{A} = A^0 + \hat{A}^i \hat{\theta}_i$, $\hat{H} = \overline{H} + \hat{H}^i \hat{\theta}_i + \tfrac{1}{2} \hat{H}^{ij} \hat{\theta}_{ij} + \tfrac{1}{3!}H^{ijk} \hat{\theta}_{ijk}$ and there exists sections $B = \tfrac{1}{2}B_{ij} t^i \wedge t^j \in \Gamma( M , \wedge^2 (\mathfrak{t}^n)^*), \hat{B} = \tfrac{1}{2} \hat{B}^{ij} t_i \wedge t_j \in \Gamma( M , \wedge^2 \mathfrak{t}^n)$ satisfying the relations
\begin{equation*}
\begin{aligned}
\hat{g}^{ij} + \hat{B}^{ij} + \langle \hat{A}^i , \hat{A}^j \rangle & = (g_{ij} + B_{ij} + \langle A_i , A_j \rangle )^{-1} \\
\hat{A}^i &= A_j ( \hat{g}^{ji} + \hat{B}^{ji} + \langle \hat{A}^j , \hat{A}^i \rangle ) \\
H_i &= \hat{F}_i - B_{ij}\hat{F}^j + \langle A_i , A_j F^j + 2F_{A^0} \rangle \\
\hat{H}^i &= F^i - \hat{B}^{ij} \hat{F}_j + \langle \hat{A}^i , \hat{A}^j \hat{F}_j + 2F_{A^0} \rangle \\
H_{ij} &= dB_{ij} + \langle \nabla^0 A_i , A_j \rangle - \langle \nabla^0 A_j , A_i \rangle \\
\hat{H}^{ij} &= d\hat{B}^{ij} + \langle \nabla^0 \hat{A}^i , \hat{A}^j \rangle - \langle \nabla^0 \hat{A}^j , \hat{A}^i \rangle \\
H_{ijk} &= 2\langle A_i , [A_j , A_k] \rangle \\
\hat{H}^{ijk} &= 2 \langle \hat{A}^i , [ \hat{A}^j , \hat{A}^k ] \rangle,
\end{aligned}
\end{equation*}
where $\nabla^0$ is the covariant derivative associated to $A^0$ and $F_{A^0}$ the curvature of $A^0$.
\end{proposition}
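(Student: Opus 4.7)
The plan is to deduce the heterotic Buscher rules from the ordinary Buscher rules of Proposition~\ref{globalb} by lifting the generalised metric from $X$ to the exact Courant algebroid on $P$, T-dualising there, and descending to $\hat{X}$. The compatibility of this two-step diagram with the isomorphism $\phi$ is exactly guaranteed by Proposition~\ref{propisohet}, which states that $\phi$ exchanges the extended actions $\alpha$ and $\hat{\alpha}$.

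First, I would use the reduction correspondence of Section~\ref{genmetred} to lift $(g,A,H)$ to the $G\times T^n$-invariant pair $(g_P,H_P)$ on $P$ defined by
\[
g_P=\sigma^*(g)+\langle A,A\rangle,\qquad H_P=\sigma^*(H)-CS_3(A),
\]
which is the unique invariant generalised metric on $E$ compatible with the extended action $\alpha$ and reducing to $\mathcal{H}_-$. Substituting $g=\overline{g}+g_{ij}\theta^i\theta^j$ and $A=A^0+A_i\theta^i$, one finds that with respect to the torus connection $\theta$ for $P\to P_0$ the fibre matrix of $g_P$ is $g_{ij}+\langle A_i,A_j\rangle$, which is positive definite by the T-dualisability hypothesis. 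Hence $(g_P,H_P)$ is T-dualisable in the sense of Proposition~\ref{globalb}.

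Applying Proposition~\ref{globalb} then produces an invariant generalised metric $(g_{\hat{P}},H_{\hat{P}})$ on $\hat{E}$ together with an isomorphism $\phi\colon E/T^n\to\hat{E}/\hat{T}^n$ carrying one to the other, controlled by the ordinary Buscher identity
\[
g_{\hat{P}}^{\,ij}+B_{\hat{P}}^{\,ij}=(g_{P,ij}+B_{P,ij})^{-1}.
\]
By Proposition~\ref{propisohet} the map $\phi$ also exchanges the extended actions, so the generalised metric on $\hat{E}$ is compatible with $\hat{\alpha}$. The converse direction of the reduction correspondence of Section~\ref{genmetred} then forces $g_{\hat{P}}$ and $H_{\hat{P}}$ to take the form
\[
g_{\hat{P}}=\hat{\sigma}^*(\hat{g})+\langle\hat{A},\hat{A}\rangle,\qquad H_{\hat{P}}=\hat{\sigma}^*(\hat{H})-CS_3(\hat{A})
\]
for a uniquely determined triple $(\hat{g},\hat{A},\hat{H})$ on $\hat{X}$; this is the triple corresponding to $\hat{\mathcal{H}}_-:=\phi(\mathcal{H}_-)$.

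It remains to extract the explicit formulas. Expanding the ordinary Buscher identity in the bases $\theta^i$ and $\hat{\theta}_i$, using $\hat{g}=\overline{g}+\hat{g}^{ij}\hat{\theta}_i\hat{\theta}_j$ and $\hat{A}=A^0+\hat{A}^i\hat{\theta}_i$, the purely fibre part yields the first matrix equation for $\hat{g}^{ij}+\hat{B}^{ij}+\langle\hat{A}^i,\hat{A}^j\rangle$, while the mixed fibre--base part yields the connection relation $\hat{A}^i=A_j(\hat{g}^{ji}+\hat{B}^{ji}+\langle\hat{A}^j,\hat{A}^i\rangle)$. The six remaining equations for $H_i,\hat{H}^i,H_{ij},\hat{H}^{ij},H_{ijk},\hat{H}^{ijk}$ then follow by matching the $\theta^i$-, $\theta^{ij}$-, and $\theta^{ijk}$-components of $\sigma^*(H)-CS_3(A^0+A_i\theta^i)$ with the corresponding components of $\hat{\sigma}^*(\hat{H})-CS_3(\hat{A})$ on the correspondence space $C=P\times_{P_0}\hat{P}$. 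The main obstacle is the careful bookkeeping of this Chern--Simons expansion, which produces the anomalous contributions $\langle A_i,2F_{A^0}+A_jF^j\rangle\theta^i$, $\langle\nabla^0 A_i,A_j\rangle\theta^{ij}$ and $\tfrac{1}{3}\langle A_i,[A_j,A_k]\rangle\theta^{ijk}$, which are precisely the new $A$-dependent terms distinguishing the heterotic Buscher rules from the purely exact rules of Proposition~\ref{globalb}.
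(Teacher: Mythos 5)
Your proposal is correct in outline but takes a genuinely different route from the paper. The paper's proof works \emph{intrinsically} on the heterotic Courant algebroid: it repeats the argument of Proposition \ref{globalb} on $\mathcal{H} = TX \oplus \mathfrak{g}_P \oplus T^*X$, using the heterotic $A$- and $B$-shifts (and the transformation laws (\ref{equbshift}), (\ref{equashift})) to normalise the splitting so that $\mathcal{H}_-$ is the graph of the triple $(g, A_i\theta^i, \tfrac{1}{2}B_{ij}\theta^{ij})$, after which the swap map $\phi$ gives the dual triple directly; the extra $A$-dependent terms in the Buscher rules arise from undoing the $A$-shift. You instead lift to the exact Courant algebroid on $P$, apply the exact Buscher rules there, and descend via the compatibility of reduction with T-duality (Proposition \ref{propisohet} together with Section \ref{genmetred}). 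Both are valid; your route has the virtue of making the heterotic rules a formal consequence of the exact ones and renders Proposition \ref{buscherlift} essentially automatic (in the paper it is proved afterwards by a separate computation from Proposition \ref{globalhb}, so you rightly do not cite it). The cost is concentrated in one step you gloss over: the metric-adapted torus connection $\theta'$ for $P \to P_0$ determined by $g_P = \sigma^*(g) + \langle A,A\rangle$ is \emph{not} the pullback of $\theta$ but is shifted by $\delta^i = h^{ij}\langle A_j, A^0\rangle$ with $h_{ij} = g_{ij} + \langle A_i,A_j\rangle$ (and dually for $\hat\theta'$), so the "mixed fibre--base" extraction of the relation $\hat{A}^i = A_j(\hat{g}^{ji} + \hat{B}^{ji} + \langle\hat{A}^j,\hat{A}^i\rangle)$ and of the base identities $\overline{g}'=\overline{g}''$, $\overline{H}'=\overline{H}''$ requires tracking these shifts carefully; this is exactly the computation carried out in the paper's proof of Proposition \ref{buscherlift}, and your argument is complete once that bookkeeping is supplied.
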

\begin{proof}
The proof is almost the same as that of Proposition \ref{globalb}. Using the same arguments we can choose connections $\theta,\hat{\theta},A^0$ such that $\mathcal{H}$ is isomorphic to $TX \oplus \mathfrak{g}_P \oplus T^*X$ using connection $A^0$ and $3$-form $\overline{H} + \hat{F}^i \wedge \theta^i$. Taking advantage of $A$- and $B$-shifts we can choose the connections so that in addition $\mathcal{H}_-$ corresponds to the graph in $TX \oplus \mathfrak{g}_P \oplus T^*X$ of a triple $(g , A_i \theta^i , \tfrac{1}{2} B_{ij} \theta^{ij} )$.
\end{proof}
Triples $(g,A, H),(\hat{g} , \hat{A} , \hat{H} )$ related in this manner will be said to satisfy the {\em global heterotic Buscher rules}.

\begin{remark}
The requirement that $g_{ij} + \langle A_i , A_j \rangle$ be positive definite is equivalent to requiring that the lifted metric $g + \langle A , A \rangle$ is positive definite along the fibres of $P \to P_0$.
\end{remark}

Next we observe that the heterotic Buscher rules are compatible with the ordinary Buscher rules in the following sense. Suppose that we are in the setting of Proposition \ref{globalhb} so that in particular we have generalised metrics $\mathcal{H}_- \subseteq \mathcal{H}$, $\hat{\mathcal{H}}_- = \phi(\mathcal{H}_-) \subseteq \hat{\mathcal{H}}$ corresponding to triples $(g,A,H),(\hat{g},\hat{A},\hat{H})$ satisfying the global heterotic Buscher rules. Let $E$ be the exact Courant algebroid on $P$ such that $\mathcal{H}$ is obtained from $E$ by reduction and similarly let $\hat{E}$ be the exact Courant algebroid on $\hat{P}$ for which $\hat{\mathcal{H}}$ is obtained by reduction. According to Section \ref{genmetred} we have that the generalised metric $\mathcal{H}_-$ is obtained by reduction of a corresponding generalised metric $E_- \subset E$ and likewise $\hat{\mathcal{H}_-}$ is obtained from a generalised metric $\hat{E}_- \subset \hat{E}$. Recall that $E_-$ corresponds to the pair $(g',H')$ where $g' = g + 
\langle A , A \rangle$, $H' = H - CS_3(A)$ and similarly let $\hat{E}_-$ correspond to $(\hat{g}',\hat{H}')$.
\begin{proposition}\label{buscherlift}
The pairs $(g',H'),(\hat{g}',\hat{H}')$ satisfy the global Buscher rules.
\end{proposition}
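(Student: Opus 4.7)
The plan is to realise heterotic T-duality as a reduction of ordinary T-duality at the level of exact Courant algebroids on $P$ and $\hat P$, and then to transport the ordinary Buscher rules through the reduction. From Section~\ref{sectdcwr} (culminating in Proposition~\ref{propisohet}), the heterotic isomorphism $\phi : \mathcal H/T^n \to \hat{\mathcal H}/\hat T^n$ is the reduction of the ordinary T-duality isomorphism $\phi : E/T^n \to \hat E/\hat T^n$ attached to the T-dual pair $(P,H')$, $(\hat P, \hat H')$ over $P_0$, with lifted fluxes $H' = H - CS_3(A)$ and $\hat H' = \hat H - CS_3(\hat A)$. According to Section~\ref{genmetred}, the triples $(g,A,H)$ and $(\hat g,\hat A,\hat H)$ lift to $G$-invariant generalised metrics $E_- \subset E$ and $\hat E_- \subset \hat E$ corresponding respectively to the pairs $(g',H')$ and $(\hat g',\hat H')$, and these lifts are automatically $T^n$- respectively $\hat T^n$-invariant since every piece of their defining data is.

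Next I would verify that $\phi$ intertwines $E_-$ and $\hat E_-$, viewed now as generalised metrics on the reduced Courant algebroids $E/T^n$ and $\hat E/\hat T^n$. Proposition~\ref{commutingred} tells us that the two reduction procedures commute, so performing the further simple $G$-reductions of $E_-$ and $\hat E_-$ recovers $\mathcal H_-$ and $\hat{\mathcal H}_-$ respectively. Since the heterotic isomorphism sends $\mathcal H_-$ to $\hat{\mathcal H}_-$ by hypothesis, the image $\phi(E_-)$ and $\hat E_-$ are two $G$-invariant generalised metrics on $\hat E$ whose reductions to $\hat{\mathcal H}$ coincide. The correspondence of Section~\ref{genmetred} between admissible generalised metrics on $\hat{\mathcal H}$ and $G$-invariant generalised metrics on $\hat E$ compatible with the extended action is a bijection, forcing $\phi(E_-) = \hat E_-$.

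Applying Proposition~\ref{globalb} to the ordinary T-dual pair $(E,E_-),(\hat E,\hat E_-)$ then yields at once that $(g',H')$ and $(\hat g',\hat H')$ satisfy the global Buscher rules, which is the statement of the proposition.

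The principal obstacle is the uniqueness assertion in the second step: $\phi(E_-)$ and $\hat E_-$ must coincide on the nose, not merely after $G$-reduction. What rescues this is that both are further constrained to be compatible with the extended action on $\hat E$, so that the horizontal and vertical pieces of the metric and $3$-form are already pinned down by $(\hat g,\hat A,\hat H)$ via $\hat g' = \hat\sigma^*\hat g + \langle \hat A,\hat A\rangle$ and $\hat H' = \hat\sigma^*\hat H - CS_3(\hat A)$. The remaining work, verifying that the connections $(\theta,\hat\theta,A,\hat A)$ furnished by Proposition~\ref{globalhb} are genuinely a compatible $4$-tuple in the sense of Section~\ref{sectdflux} and that the resulting $\phi$ at the exact level is the one appearing in Proposition~\ref{globalb}, is routine bookkeeping and is guaranteed by the compatibility condition already built into the heterotic Buscher rules.
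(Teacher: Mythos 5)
Your strategy is genuinely different from the paper's. The paper proves Proposition \ref{buscherlift} by brute force: it writes down the connection $\theta'^i = \theta^i + \delta^i$ adapted to the lifted metric $g' = g + \langle A , A\rangle$, where $\delta^i = h^{ij}\langle A_j , A^0\rangle$ and $h_{ij} = g_{ij} + \langle A_i , A_j\rangle$, expands $g'$ and $H' = H - CS_3(A)$ in this frame, and checks each Buscher relation directly, including the identities $\overline{g}' = \overline{g}''$ and $\overline{H}' = \overline{H}''$ matching the base components coming from the two sides. You instead propose to identify the lifted generalised metrics $E_-$, $\hat{E}_-$ abstractly, show the exact-level T-duality isomorphism exchanges them, and quote Proposition \ref{globalb}. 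The identification step is sound: compatibility of $E_-$ with the extended action means $K \subseteq E_-$, Proposition \ref{propisohet} gives $\phi(K) = \hat{K}$, so $\phi(E_-)$ is again compatible; and since a compatible generalised metric is recovered from its reduction as $\hat{K} \oplus (\hat{E}_- \cap \hat{K}^\perp)$, two compatible metrics with the same reduction coincide. That part is a clean and genuinely more conceptual argument than anything in the paper.

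The gap is in your last paragraph. Proposition \ref{globalb} is an existence statement: it constructs a particular isomorphism $\phi_0$, adapted to $E_-$ via the metric connection $\theta'$ for $g'$ on $P \to P_0$ and a sequence of normalising $B$-shifts, and shows $\phi_0(E_-)$ is Buscher-dual. Your argument establishes $\hat{E}_- = \phi(E_-)$ for the heterotic isomorphism $\phi$, which is built from the compatible $4$-tuple $(\theta,\hat{\theta},A,\hat{A})$ in which $\theta$ is adapted to $g$ on $X$, not to $g'$ on $P$. The Buscher relations are statements about decompositions with a common base pair $(\overline{g},\overline{H})$, so knowing that $\hat{E}_-$ is the image of $E_-$ under \emph{some} standard T-duality isomorphism does not by itself place $(g',H')$ and $(\hat{g}',\hat{H}')$ in the relation of Proposition \ref{globalb}; one must check that $\phi$ agrees with $\phi_0$ (or at least that the discrepancy is absorbed by the existential quantifiers over $\hat{\theta}$, $B$, $\hat{B}$). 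Verifying this is exactly the computation you call routine bookkeeping: it requires the explicit shifts $\delta^i$, $\hat{\delta}_i$ and the verification that the base metric and base $3$-form extracted from the two lifts coincide, which is the entire content of the paper's proof. So the argument as written reduces the proposition to an unproved claim of the same difficulty; to complete your route you would still have to carry out (or cleverly avoid) that comparison of the two isomorphisms.
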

\begin{proof}
Let $\theta' = \theta'^i t_i$ be the $T^n$-connection for $P \to P_0$ such that $g' = \overline{g}' + h_{ij} \theta'^i \theta'^j$ for some $h_{ij}$. Write $H'$ as $H' = \overline{H}' + H'_i \theta'^i + \tfrac{1}{2} H'_{ij} \theta'^{ij} + \tfrac{1}{3!} H'_{ijk} \theta'^{ijk}$. We have that $h_{ij} = g_{ij} + \langle A_i , A_j \rangle$. Similarly we obtain a $\hat{T}^n$-connection $\hat{\theta}'$ such that $\hat{g}' = \overline{g}'' + \hat{h}^{ij} \hat{\theta}'_i \hat{\theta}'_j$, $\hat{H}' = \overline{H}'' + \hat{H}'^i \hat{\theta}'_i + \tfrac{1}{2} \hat{H}'^{ij} \hat{\theta}'_{ij} + \tfrac{1}{3!} \hat{H}'^{ijk} \hat{\theta}'_{ijk}$. Since $(g,A,H)$ was assumed T-dualisable we have that $h_{ij}$ is positive definite. Let $h^{ij}$ be the inverse matrix and similarly let $\hat{h}_{ij}$ be the inverse of $\hat{h}^{ij}$. Then $\theta'^i = \theta^i + \delta^i$, where $\delta^i = h^{ij}\langle A_j , A^0 \rangle$ and $\hat{\theta}'_i = \hat{\theta}_i + \hat{\delta}_i$, where $\hat{\delta}_i = \hat{h}_{ij} \langle \
hat{A}^j , A^0 \rangle$. We find
\begin{equation*}
\begin{aligned}
\overline{g}' &= \overline{g} + \langle A^0 , A^0 \rangle - h_{ij} \delta^i \delta^j \\
\overline{H}' &= \overline{H} - CS_3(A^0) - F^i \wedge \hat{\delta}_i - \hat{F}_i \wedge \delta^i - h_{ij} d \delta^i \wedge \delta^j + \tfrac{1}{2} dB_{ij} \wedge \delta^{ij}\\
H'_i &= ( \hat{F}_i + d \hat{\delta}_i ) - B_{ij} ( F^j + d \delta^j) \\
H'_{ij} &= dB_{ij} \\
H'_{ijk} &= 0,
\end{aligned}
\end{equation*}
with similar identities for $\overline{g}'', \overline{H}'', \hat{H}'^i,\hat{H}'^{ij},\hat{H}'_{ijk}$. In order to verify the Buscher rules for the pairs $(g',H'),(\hat{g}',\hat{H}')$ it remains only to show that $\overline{g}' = \overline{g}''$ and $\overline{H}' = \overline{H}''$. These identities follow by straightforward computation.
\end{proof}


\section{Heterotic equations of motion and T-duality}\label{sechetd}


\subsection{Heterotic equations by reduction}

Given a metric $g$ (not necessarily positive definite) and $3$-form $H$ on $X$ we let $\nabla^g$ denote the Levi-Civita connection and define $\nabla^{g,H}$ by $\nabla_X^{g,H} Y = \nabla_X^g Y + \frac{1}{2} g^{-1}( i_Y i_X H)$. Let $R^{g,H}$ denote the curvature of $\nabla^{g,H}$ and $Ric^{g,H}$ the Ricci curvature defined by $Ric^{g,H}(X,Y) = Tr( Z \mapsto R^{g,H}(Z,X)Y )$. If $\varphi$ is a function on $X$ we consider the following equation for the triple $g,H,\varphi$
\begin{equation}\label{eint}
Ric^{g,H} + 2 \nabla^{g,H} d\varphi = 0.
\end{equation}
These equations occur in the low energy limit of type II string theories \cite{cfmp}. We refer to $\varphi$ as the dilaton and (\ref{eint}) as the {\em type II equations of motion}. Solutions are called {\em strong} if in addition the $3$-form $H$ is closed. One also considers the following equation arising from variation of the dilaton \cite{cfmp}:
\begin{equation}\label{eint2}
s^g + 4\Delta^g \varphi - 4 | d\varphi |^2 + \tfrac{1}{2} |H |^2 = 0,
\end{equation}
where $s^g$ is the scalar curvature of $g$, $\Delta^g f = g^{\mu \nu} \nabla^g_{\mu} \nabla^g_{\nu} f$ the Laplacian and $|H|^2 = \tfrac{1}{3!} H_{\alpha \beta \gamma} H^{\alpha \beta \gamma}$.\\

The heterotic equivalent of these equations are as follows. Let $P \to X$ be a principal $G$-bundle and fix an invariant symmetric non-degenerate pairing $c( \, \, , \, \, )$ on $\mathfrak{g}$. Consider a $4$-tuple $(g,A,H,\varphi)$ consisting of a metric $g$ on $X$ a $G$-connection $A$, a $3$-form $H$ on $X$ and a function $\varphi$ on $X$. Let $F$ be the curvature of $A$. Consider the following equations:
\begin{equation}\label{heint}
\begin{aligned}
Ric^{g,H}_{\mu \nu} + 2\nabla_\mu^{g,H} (d\varphi)_{\nu} - c( F_{\mu \alpha} , {F_{\nu}}^\alpha) &= 0 \\
-d_A^*(F)_\mu + 2 {F_\mu}^\alpha \partial_\alpha \varphi - \tfrac{1}{2} H_{\mu \alpha \beta} F^{\alpha \beta} & = 0.
\end{aligned}
\end{equation}
We refer to (\ref{heint}) as the {\em heterotic equations of motion}. Solutions are called {\em strong} if in addition we have $dH = c(F \wedge F)$. Once again we may consider an additional equation arising from the dilaton:
\begin{equation}\label{heint2}
s^g + 4\Delta^g \varphi - 4 | d\varphi |^2 + \tfrac{1}{2} |H|^2 - \frac{1}{2} \langle F_{\alpha \beta} , F^{\alpha \beta} \rangle = 0.
\end{equation}
The relation between Equations (\ref{heint}),(\ref{heint2}) and heterotic string theory deserves further clarification. In the low energy limit of heterotic string theory one has a $4$-tuple $(g,A^G,H,\varphi)$ and additionally a metric connection $A^{TX}$ on the tangent bundle. Let $F^G$ denote the curvature of $A^G$ and $R^{TX}$ the curvature of $A^{TX}$. Suppose $X$ has dimension $m$. Let $c_\mathfrak{g}$ denote an invariant pairing on $\mathfrak{g}$, $c_{\mathfrak{so}(m)}$ an invariant pairing on $\mathfrak{so}(m)$ and let $\alpha'$ be a positive constant. The equations of motion in the low energy limit of heterotic string theory are \cite{gpt},\cite{huto}
\begin{equation*}
\begin{aligned}
Ric^{g,H}_{\mu \nu} + 2\nabla_\mu^{g,H} (d\varphi)_{\nu} + \alpha' c_\mathfrak{g}({F^G}_{\mu \alpha} , {{F^G}_{\nu}}^\alpha) -  \alpha' c_{\mathfrak{so}(m)}( {R^{TX}}_{\mu \alpha} , {{R^{TX}}_{\nu}}^\alpha) &= 0 \\
-d_{A^G}^*(F^G)_\mu + 2 {(F^G)_\mu}^\alpha \partial_\alpha \varphi - \tfrac{1}{2} H_{\mu \alpha \beta} (F^G)^{\alpha \beta} & = 0 \\
-d_{A^{TX}}^*(R^{TX})_\mu + 2 {(R^{TX})_\mu}^\alpha \partial_\alpha \varphi - \tfrac{1}{2} H_{\mu \alpha \beta} (R^{TX})^{\alpha \beta} & = 0.
\end{aligned}
\end{equation*}
In addition solutions are required to satisfy the anomaly cancellation condition
\begin{equation}\label{anom}
dH = -\alpha' c_\mathfrak{g}( F^G \wedge F^G) + \alpha' c_{\mathfrak{so}(m)}( R^{TX} \wedge R^{TX} ).
\end{equation}
Lastly there is the equation corresponding to dilaton variation:
\begin{equation*}
s^g + 4\Delta^g \varphi - 4 | d\varphi |^2 + \tfrac{1}{2} |H|^2 + \frac{\alpha'}{2} c_\mathfrak{g}( {F^G}_{\alpha \beta} , {F^G}^{\alpha \beta} ) - \frac{\alpha'}{2} c_{\mathfrak{so}(m)}( {R^{TX}}_{\alpha \beta} , {R^{TX}}^{\alpha \beta} ) = 0.
\end{equation*}
We see that these equations are just a special case of (\ref{heint}) with gauge group $G \times O(m)$ and pairing $c = \alpha' (-c_\mathfrak{g} + c_{\mathfrak{so}(m)})$. Condition (\ref{anom}) is simply the requirement that solutions are strong. The pairings $c_\mathfrak{g},c_\mathfrak{so}(m)$ should be normalised so that at the level of cohomology classes (\ref{anom}) becomes the topological condition $p_1(P) - p_1(TX) = 0$.

\begin{proposition}\label{einstlift}
Let $\sigma \colon P \to X$ be a principal $G$-bundle and $c( \, \, , \, \, ) = \langle \, \, , \, \, \rangle$ an invariant pairing on $\mathfrak{g}$. A $4$-tuple $(g',A,H',\varphi)$ satisfies the heterotic equations of motion (\ref{heint}) if and only if the lifted $3$-tuple $(g,H,\varphi)$ satisfies the type II equations of motion (\ref{eint}), where
\begin{equation*}
\begin{aligned}
g &= g' + \langle A , A \rangle \\
H & = H' - CS_3(A).
\end{aligned}
\end{equation*}
Moreover $(g',A,H',\varphi)$ is a strong solution if and only if $(g,H,\varphi)$ is a strong solution. Let $e_1, \dots , e_k$ denote a basis $\mathfrak{g}$ with structure constants ${c_{ij}}^k$ given by $[e_i , e_j] = {c_{ij}}^k e_k$. Use the metric $a_{ij} = \langle e_i , e_j \rangle$ to raise and lower indices. Let $\beta^\varphi$ denote the left hand side of (\ref{eint2}) using $(g,H,\varphi)$ and $\beta^\varphi_{het}$ the left hand side of (\ref{heint2}) using $(g',A,H',\varphi)$. We have $\beta^\varphi_{het} = \beta^\varphi + \tfrac{1}{12} c_{ijk}c^{ijk}$.
\end{proposition}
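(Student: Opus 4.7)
The proof rests on a Kaluza--Klein style decomposition of all objects on $P$ with respect to the connection $A$. Choose a local frame $\{X^H_\mu, \psi(e_i)\}$ of $TP$, where $X^H_\mu$ are horizontal lifts; these are $g$-orthogonal to the vertical fields $\psi(e_i)$ because $c(A,A)$ vanishes on horizontal vectors, and $g(\psi(e_i), \psi(e_j)) = a_{ij}$. The dilaton $\varphi$ is $G$-invariant with zero vertical derivative, and $H = \sigma^*H' - CS_3(A)$ is likewise $G$-invariant.

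First I would compute $\nabla^g$ in this frame using Koszul together with the Atiyah identity $[X^H, Y^H] = [X,Y]^H - \psi(F(X,Y))$ coming from (\ref{atiyah}). This produces the standard Kaluza--Klein Christoffel symbols, whose off-diagonal blocks are proportional to $F$. Adding the torsion $\tfrac{1}{2}g^{-1}H$ gives $\nabla^{g,H}$; the key observation is that $CS_3(A) = c(A,F) - \tfrac{1}{6}c(A,[A,A])$ splits cleanly into a mixed piece $c(A,F)$, whose torsion partially absorbs the Kaluza--Klein $F$-cross-terms, and a purely vertical piece $-\tfrac{1}{6}c(A,[A,A])$ built from the structure constants.

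Next, decompose the symmetric tensor equation $Ric^{g,H} + 2\nabla^{g,H}d\varphi = 0$ on $P$ into horizontal-horizontal, horizontal-vertical and vertical-vertical blocks. The horizontal-horizontal block yields the first heterotic equation, with the term $-c(F_{\mu\alpha},F_\nu{}^\alpha)$ arising from the $F^2$-contribution to the Kaluza--Klein Ricci tensor corrected by the mixed Chern--Simons torsion. The horizontal-vertical block, after contracting with $e^i$ and using that $d\varphi$ is purely horizontal, collapses to the Yang--Mills-type second heterotic equation. The vertical-vertical block must be shown to be an identity satisfied automatically by $G$-invariant data of this form; this uses ad-invariance of $c$, total antisymmetry of the lowered structure constants $c_{ijk}$, and the fact that the vertical torsion from $-CS_3(A)$ exactly matches the Kaluza--Klein vertical Ricci correction. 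The strong-solution equivalence is immediate from $dH = \sigma^*(dH') - c(F \wedge F)$.

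For the dilaton equation, each term of (\ref{eint2}) on $P$ is rewritten in the adapted frame. Standard Kaluza--Klein identities relate $s^g$, $\Delta^g\varphi$ and $|d\varphi|_g^2$ to their counterparts on $X$, producing the expected $-\tfrac{1}{2}c(F_{\alpha\beta},F^{\alpha\beta})$ contribution. The remaining discrepancy comes entirely from $|H|_g^2$: its horizontal and mixed blocks match $|H'|^2$ and the $F^2$-coupling, while the purely vertical block evaluates the norm of $\tfrac{1}{6}c(A,[A,A])$ in the bi-invariant fibre metric $a_{ij}$, which a direct calculation with totally antisymmetric $c_{ijk}$ gives as $\tfrac{1}{12}c_{ijk}c^{ijk}$. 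The main obstacle will be the horizontal-vertical block: assembling the precise Yang--Mills-type operator $-d_A^*F + 2\iota_{\mathrm{grad}\,\varphi}F - \tfrac{1}{2}H\cdot F$ requires tracking how the mixed Kaluza--Klein Christoffels, the off-diagonal Chern--Simons torsion, the vertical divergences in $Ric^{g,H}$, and the vertical gradient of the horizontal function $\varphi$ conspire to cancel. These cancellations are what make the correction $-CS_3(A)$ to $H'$ structurally inevitable rather than ad hoc.
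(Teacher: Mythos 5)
Your proposal is correct and follows essentially the same route as the paper: split $TP=\sigma^*(TX)\oplus\mathfrak{g}$ via $A$, compute the Levi--Civita and then the $H$-torsionful connection block by block, and identify the horizontal-horizontal, horizontal-vertical and vertical-vertical blocks of $Ric^{g,H}+2\nabla^{g,H}d\varphi$ with the two heterotic equations and a trivially satisfied identity, with the strong-solution claim following from $dH=\sigma^*(dH')-c(F\wedge F)$. One bookkeeping caveat on the dilaton identity: the discrepancy $\tfrac{1}{12}c_{ijk}c^{ijk}$ does not come ``entirely from $|H|^2$'' as you assert, since for non-abelian $G$ the totally geodesic fibres of $(P,g)$ carry the bi-invariant metric $a_{ij}$ and hence contribute their intrinsic scalar curvature $\tfrac{1}{4}c_{ijk}c^{ijk}$ to $s^g$ through the O'Neill formula; the stated coefficient emerges only after this term is combined with the vertical and mixed blocks of $\tfrac{1}{2}|H|^2$, so your accounting, taken literally, would miss a piece even though the overall computational scheme would still deliver the result of (\ref{heint2}) versus (\ref{eint2}) once all blocks are tracked.
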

\begin{proof}
Let $F$ denote the curvature of $A$. Using the connection $A$ we have a splitting $TP = \sigma^*(TX) \oplus \mathfrak{g}$. With respect to this splitting one finds that the Levi-Civita connections for $g$ and $g'$ are related as follows:
\begin{equation*}
\begin{aligned}
\nabla^g_\mu (\partial_\nu) &=\nabla^{g'}_\mu(\partial_\nu) - \tfrac{1}{2} F_{\mu \nu} \\
\nabla^g_\mu(a) &=\nabla_\mu^A(a) + \tfrac{1}{2} \langle {F_\mu}^\nu , a \rangle \partial_\nu \\
\nabla^g_a ( \partial_\mu ) &= \tfrac{1}{2} \langle {F_\mu}^\nu , a \rangle \partial_\nu \\
\nabla^g_a(b) &= -\tfrac{1}{2} [a , b ].
\end{aligned}
\end{equation*}
where $a,b$ are sections of the adjoint bundle $\mathfrak{g}_P$ and $\nabla^A$ is the covariant derivative associated to $A$. Considering now connections with torsion one finds
\begin{equation}\label{levic}
\begin{aligned}
\nabla^{g,H}_\mu (\partial_\nu) &= \nabla^{g',H'}_\mu (\partial_\nu) - F_{\mu \nu} \\
\nabla^{g,H}_\mu(a) &=\nabla_\mu^A(a) + \langle {F_\mu}^\nu , a \rangle \partial_\nu \\
\nabla^{g,H}_a ( \partial_\mu ) &= 0 \\
\nabla^{g,h}_a(b) &= 0.
\end{aligned}
\end{equation}
From this it follows that
\begin{equation*}
\begin{aligned}
Ric^{g,H}_{\mu \nu} + 2 \nabla^{g,H}_\mu (d\varphi)_\nu &= Ric^{g',H'}_{\mu \nu} +2 \nabla_\mu^{g',H'}(d\varphi)_\nu - c( F_{\mu \alpha} , {F_\nu}^{\alpha} ) \\
Ric^{g,H}(\partial_\mu , a) + 2\nabla^{g,H}_\mu( d\varphi)(a)  &= -c( a , -d^*_A(F)_\mu + 2{F_\mu}^\nu \partial_\nu \varphi - \tfrac{1}{2}H'_{\mu \alpha \beta} F^{\alpha \beta} ) \\
Ric^{g,H}(a , \partial_\mu) + 2\nabla^{g,H}_a( d\varphi)_\mu &=0\\
Ric^{g,H}(a,b) + 2 \nabla^{g,H}_a( d\varphi)(b) &= 0.
\end{aligned}
\end{equation*}
Thus $(g',A,H',\varphi)$ satisfies (\ref{heint}) if and only if $(g,H,\varphi)$ satisfies (\ref{eint}). The claim about strong solutions follows since $dH = dH' - c(F \wedge F)$. Making use of (\ref{levic}) we obtain the identity $\beta^\varphi_{het} = \beta^\varphi + \tfrac{1}{12} c_{ijk}c^{ijk}$.
\end{proof}


\subsection{Type II equations and T-duality}

Suppose $(g,H),(\hat{g} , \hat{H})$ satisfy the global Buscher rules. Using the connections $\theta,\hat{\theta}$ we have splittings $TX = TM \oplus \mathfrak{t}^n$, $T\hat{X} = TM \oplus (\mathfrak{t}^n)^*$. Define isomorphisms $\psi_{\pm} \colon TX \to T\hat{X}$ by setting $\psi_{\pm}( X , 0 ) = X$, $\psi_+( (0,t_i)) = -(g_{ij} + B_{ij}) t^j$, $\psi_-( (0, t_i)) = (g_{ji} + B_{ji}) t^j$.
\begin{proposition}\label{eetd}
Let $\varphi$ be a function on $M$ and set $\hat{\varphi} = \varphi - \frac{1}{2} log (det( g_{ij} + B_{ij} ))$. Then
\begin{equation}\label{ricci}
( \psi_-^* \otimes \psi_+^* ) (  Ric^{\hat{g},\hat{H}} + 2 \nabla^{\hat{g},\hat{H}} d\hat{\varphi} ) = Ric^{g,H} + 2\nabla^{g,H} d\varphi.
\end{equation}
In particular $(g,H,\varphi)$ is a solution of the type II equations of motion if and only if $(\hat{g},\hat{H},\hat{\varphi})$ is a solution. Let $\beta^\varphi$ denote the left hand side of (\ref{eint2}) using $(g,H,\varphi)$ and $\hat{\beta}^{\hat{\varphi}}$ the left hand side of (\ref{eint2}) using $(\hat{g},\hat{H},\hat{\varphi})$. Then $\beta^\varphi = \hat{\beta}^{\hat{\varphi}}$.
\end{proposition}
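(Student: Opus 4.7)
The plan is to reduce everything to tensors on the common base $M$, exploiting the $T^n$- and $\hat T^n$-invariance of the data, and then to check that the Buscher relations from Proposition~\ref{globalb} translate the reduced equations on one side into those on the other, with the $\log\det$ shift in the dilaton absorbing the discrepancy of trace terms. First I would fix a basis $t_i$ of $\mathfrak t^n$ and use the connections $\theta,\hat\theta$ to split $TX = \pi^*TM \oplus \mathfrak t^n$ and $T\hat X = \hat\pi^*TM \oplus (\mathfrak t^n)^*$. Because $g,H,\varphi$ are all pulled back from $M$ in the appropriate sense, the tensor $R := Ric^{g,H} + 2\nabla^{g,H} d\varphi$ decomposes into three pieces: a symmetric $2$-tensor $R_{MM}$ on $M$, a $TM\otimes(\mathfrak t^n)^*$ piece $R_{Mi}$, and a symmetric $\mathfrak t^n \otimes \mathfrak t^n$ piece $R_{ij}$. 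The map $\psi_\pm$ is designed precisely so that verifying the identity (\ref{ricci}) amounts to checking, after lowering indices with $g_{ij}+B_{ij}$ on one side and $\hat g^{ij}+\hat B^{ij}$ on the other, that the three components match.

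For the main calculation, I would use the Koszul formula for $\nabla^{g,H}$ applied to the warped-product form $g = \overline g + g_{ij}\theta^i\theta^j$ with torsion $H = \overline H + H_i\theta^i + \tfrac12 H_{ij}\theta^{ij}$. Since $t_i$ acts by Killing vectors that are parallel along the fibres, one gets explicit formulas for the Christoffel symbols of $\nabla^{g,H}$ in terms of $\overline g$, $g_{ij}$, $B_{ij}$ (the $B$-field appearing because $dB_{ij} = H_{ij}$) and the curvatures $F^i = d\theta^i$, $\hat F_i$. The three components of $R$ then become explicit expressions of the form $\overline{Ric} + (\text{quadratic in }\partial g,\partial B, F,\hat F) + 2\overline\nabla d\varphi$. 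Performing the same reduction for $\hat R = \hat{Ric}^{\hat g,\hat H} + 2\hat\nabla^{\hat g,\hat H}d\hat\varphi$ gives analogous expressions in $\hat g^{ij}, \hat B^{ij}$, and one substitutes the Buscher rules $(\hat g^{ij}+\hat B^{ij}) = (g_{ij}+B_{ij})^{-1}$, $\hat F^i = F^i - \hat B^{ij}\hat F_j$, etc., to rewrite them in the original variables.

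The hard part will be the $R_{MM}$ component, where the dilaton shift enters nontrivially. Writing $E_{ij} := g_{ij}+B_{ij}$, one has $d\hat\varphi = d\varphi - \tfrac12\, E^{ij}\,dE_{ji}$ (with $E^{ij}$ the inverse matrix), and the term $2\overline\nabla\, d\hat\varphi$ contributes a piece $-\overline\nabla(E^{ij}\overline\nabla E_{ji})$ that must exactly cancel the difference between the $M$-trace of the Ricci curvatures of $g$ and $\hat g$ together with the quadratic combinations of $\partial E$ coming from $R_{ij}$. This is the cancellation already observed in the physics literature (the standard one-loop Buscher check), and once the $R_{MM}$ identity is verified, the $R_{Mi}$ identity reduces to the closedness-type identity $d(H_i - B_{ij}\hat F^j) = \hat F_i\wedge(\cdot)$ which is built into the Buscher rules, and $R_{ij}$ is immediate from $E\leftrightarrow E^{-1}$.

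For the dilaton equation (\ref{eint2}), the same reduction gives $s^g$, $|H|^2$ and $|d\varphi|^2$ as polynomial expressions in the base data. The invariance $\beta^\varphi = \hat\beta^{\hat\varphi}$ then follows either by the analogous direct substitution, or more conceptually by noting that $\beta^\varphi$ is the trace of the generalised Ricci in the Courant algebroid sense weighted by the pure-spinor line $e^{-2\varphi}\det(g_{ij}+B_{ij})^{1/2}$, which is by construction preserved under the Courant isomorphism $\phi$ together with the prescribed shift in $\varphi$. I would include the direct-substitution version to keep the argument self-contained, since the conceptual version requires machinery not developed in the paper.
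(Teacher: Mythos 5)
Your overall strategy coincides with the paper's: split $TX = TM\oplus\mathfrak{t}^n$ and $T\hat X = TM\oplus(\mathfrak{t}^n)^*$ via the connections $\theta,\hat\theta$, compute the blocks of $Ric^{g,H}+2\nabla^{g,H}d\varphi$ explicitly in terms of $\overline g$, $g_{ij}$, $B_{ij}$, the curvatures $F^i,\hat F_i$ and the derivatives $t_{\mu ij}=\tfrac12\partial_\mu(g_{ij}+B_{ij})$, then verify the identity and the statement about $\beta^\varphi$ by substituting the Buscher relations; the dilaton shift enters exactly as you say, through $d\hat\varphi = d\varphi-\tfrac12\,\mathrm{tr}\,(E^{-1}dE)$ with $E_{ij}=g_{ij}+B_{ij}$.

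There is, however, one concrete flaw in your decomposition. The tensor $R=Ric^{g,H}+2\nabla^{g,H}d\varphi$ is \emph{not} symmetric, because $\nabla^{g,H}$ has (totally antisymmetric) torsion $H$; so $R$ has \emph{four} independent blocks, not three: $R_{\mu\nu}$, $R_{\mu i}$, $R_{i\mu}$ and $R_{ij}$. In the paper's computation the two mixed blocks genuinely differ --- one is built from $f^i_+=\tfrac12(F^i-g^{ij}H_j)$ and the other from $f^i_-=\tfrac12(F^i+g^{ij}H_j)$ --- and $R_{ij}$ is likewise non-symmetric, since $t_{\mu ij}$ contains $\partial_\mu B_{ij}$. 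This asymmetry is precisely why the statement twists by $\psi_-^*\otimes\psi_+^*$ with two \emph{different} maps on the two slots; if the blocks were symmetric as you assert, the identity could not hold with $\psi_+\neq\psi_-$. So your plan must compute and match all four blocks (hence both mixed identities, one for each of $f_\pm$), not three. With that correction the argument goes through exactly as in the paper.
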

\begin{proof}
The proof is by direct calculation of $Ric^{g,H}$ and $Ric^{\hat{g},\hat{H}}$. First write $H$ as $H = \overline{H} + H_i \wedge \theta^i + \frac{1}{2} H_{ij} \theta^{ij}$, so $H_i = \hat{F}_i - B_{ij}F^j$, $H_{ij} = dB_{ij}$ and define $f^i_+ = \frac{1}{2}( F^i - g^{ij}H_j)$, $f^i_- = \frac{1}{2}( F^i + g^{ij}H_j)$ and $t_{\mu ij} = \frac{1}{2} \partial_\mu ( g_{ij} + B_{ij} )$. We use $\overline{g}$ and $g_{ij}$ to raise and lower indices. With respect to the splitting $TX = TM \oplus \mathfrak{t}^n$ one finds:
\begin{equation*}
\begin{aligned}
Ric^{g,H}_{\mu \nu} + 2\nabla^{g,H}_\mu (d\varphi)_\nu &= Ric^{\overline{g},\overline{H}}_{\mu \nu} + 2\nabla_\mu^{\overline{g},\overline{H}}( d\varphi - \frac{1}{2} {{t_\lambda}^k}_kdx^\lambda)_\nu \\ & \;\; \; \; \; \; \; \; -g_{ij}({f^i_+}_{\mu \alpha} {{f^j_+}_{\nu}}^\alpha + {f^i_-}_{\mu \alpha} {{f^j_-}_{\nu}}^\alpha) - t_{\mu ij} {t_\nu}^{ij}\\
Ric^{g,H}_{\mu i} +2\nabla^{g,H}_\mu(d\varphi)_i &= -2g_{ij}{{f^j_+}_\mu}^\lambda ( \partial_\lambda \varphi - \frac{1}{2} {{t_\lambda}^k}_k ) \\
& \; \; \; \; \; \; \; \; + d^*( g_{ij}f^j_+)_\mu + \frac{1}{2} H_{\mu \alpha \beta} g_{ij}{f^j_+}^{\alpha \beta} + {t^\alpha}_{ij}( {f^j_-}_{\mu \alpha} - {f^j_+}_{\mu \alpha} ) \\
Ric^{g,H}_{i \mu} +2\nabla^{g,H}_i(d\varphi)_\mu &= -2g_{ij}{{f^j_-}_\mu}^\lambda ( \partial_\lambda \varphi - \frac{1}{2} {{t_\lambda}^k}_k ) \\
& \; \; \; \;\; \; \; \; + d^*( g_{ij}f^j_-)_\mu + \frac{1}{2} H_{\mu \alpha \beta} g_{ij}{f^j_-}^{\alpha \beta} + {t^\alpha}_{ji}( {f^j_+}_{\mu \alpha} - {f^j_-}_{\mu \alpha} )\\
Ric^{g,H}_{ij} + 2\nabla_i^{g,H}(d\varphi)_j &= 2 {t^\lambda}_{ji}( \partial_\lambda \varphi - \frac{1}{2} {{t_\lambda}^k}_k ) \\
& \; \; \; \; \; \; \; \; + g_{ik}g_{jl}{f^k_-}_{\alpha \beta} {f^l_+}^{\alpha \beta} - \nabla^{\overline{g}}_\alpha( {t^\mu}_{ji} \partial_\mu )^\alpha + 2{t_\alpha}_{ki} {{t^\alpha}_j}^k,
\end{aligned}
\end{equation*}
with similar expressions for $Ric^{\hat{g} , \hat{H}} + 2 \nabla^{\hat{g},\hat{H}} \hat{\varphi}$. From here it is simply a matter of direct computation to verify the validity of (\ref{ricci}). Similarly a direct computation yields the identity $\beta^\varphi = \hat{\beta}^{\hat{\varphi}}$.
\end{proof}


\subsection{T-duality invariance of the heterotic equations} \label{secheof}

Putting together the results of the previous sections we are at last able to prove that heterotic T-duality preserves the heterotic equations of motion. Suppose we are again in the setting of heterotic T-duality and moreover that we have generalised metrics $\mathcal{H}_-$, $\hat{\mathcal{H}}_-$ related as in Proposition \ref{globalhb}. Let $\mathcal{H}_-$ correspond to the triple $(g,A,H)$ and $\hat{\mathcal{H}}_-$ to $(\hat{g},\hat{A},\hat{H})$, so that $(g,A,H),(\hat{g},\hat{A},\hat{H})$ satisfy the global heterotic Buscher rules.
\begin{proposition}\label{prophetequpres}
Let $\varphi$ be a function on $M$ and set $\hat{\varphi} = \varphi -\tfrac{1}{2}( log (det(g_{ji} + B_{ij} + \langle A_i , A_j \rangle )) )$. Then $(g,A,H,\varphi)$ is a solution of the heterotic equations of motion if and only if $(\hat{g} , \hat{A} , \hat{H} , \hat{\varphi} )$ is a solution. Moreover $(g,A,H,\varphi)$ satisfies (\ref{heint2}) if and only if $(\hat{g},\hat{A},\hat{H},\hat{\varphi})$ does.
\end{proposition}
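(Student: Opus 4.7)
The plan is to reduce everything to the ordinary (type II) T-duality case by lifting from $X$ (resp.\ $\hat{X}$) up to $P$ (resp.\ $\hat{P}$), applying the type II invariance result Proposition \ref{eetd}, and then pushing back down. The three main ingredients are already in place: Proposition \ref{einstlift} converts heterotic solutions on $(X,P)$ to type II solutions on $P$; Proposition \ref{buscherlift} shows that the heterotic Buscher rules lift to the ordinary Buscher rules; and Proposition \ref{eetd} says the type II equations are preserved under ordinary T-duality. Stringing these together should give the result, provided the dilaton transformation is verified to match.

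Concretely, I would first form the lifts $(g', H', \varphi) = (g + \langle A,A\rangle, H - CS_3(A), \sigma^*\varphi)$ on $P$ and $(\hat{g}', \hat{H}', \hat{\varphi}) = (\hat{g} + \langle \hat{A},\hat{A}\rangle, \hat{H} - CS_3(\hat{A}), \hat{\sigma}^*\hat{\varphi})$ on $\hat{P}$. By Proposition \ref{einstlift}, $(g,A,H,\varphi)$ solves the heterotic equations \eqref{heint} if and only if $(g',H',\varphi)$ solves the type II equations \eqref{eint}, and similarly on the hatted side; the ``strong'' qualifier matches on both sides since $dH = \langle F\wedge F\rangle \iff dH' = 0$. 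By Proposition \ref{buscherlift}, the two lifted pairs $(g',H')$, $(\hat{g}',\hat{H}')$ satisfy the ordinary global Buscher rules with respect to the torus bundles $P,\hat{P} \to P_0$ over the common base $P_0$.

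The key compatibility to check is that the heterotic dilaton transformation matches the type II dilaton transformation for the lifted data. Reading off the proof of Proposition \ref{buscherlift}, the connection $\theta'$ for $P \to P_0$ satisfies $g' = \overline{g}' + h_{ij} \theta'^i \theta'^j$ with $h_{ij} = g_{ij} + \langle A_i, A_j\rangle$, while the $B$-field components of the lifted data are still $B_{ij}$ (since $H'_{ij} = dB_{ij}$). Hence the ordinary Buscher dilaton rule of Proposition \ref{eetd} applied at the lifted level yields
\begin{equation*}
\hat{\varphi} \;=\; \varphi - \tfrac{1}{2}\log\bigl(\det(h_{ij} + B_{ij})\bigr) \;=\; \varphi - \tfrac{1}{2}\log\bigl(\det(g_{ij} + B_{ij} + \langle A_i, A_j\rangle)\bigr),
\end{equation*}
which is precisely the heterotic shift prescribed in the statement. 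Thus applying Proposition \ref{eetd} on $P_0$ to the lifted dilatons, followed by Proposition \ref{einstlift} in reverse on $\hat{P}$, completes the proof of invariance of \eqref{heint}. For the dilaton equation \eqref{heint2}, Proposition \ref{einstlift} gives $\beta^\varphi_{het} = \beta^\varphi + \tfrac{1}{12}c_{ijk}c^{ijk}$, and the same identity holds on the hatted side with the same constant correction (since this term depends only on the structure of $\mathfrak{g}$ and the fixed pairing $c$, both of which are unchanged by T-duality). Combining with the equality $\beta^\varphi = \hat{\beta}^{\hat{\varphi}}$ from Proposition \ref{eetd} gives $\beta^\varphi_{het} = \hat{\beta}^{\hat{\varphi}}_{het}$.

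The only real obstacle I anticipate is the bookkeeping for the dilaton shift: ensuring that the lifted $B$-field components really are exactly $B_{ij}$, with no extra contribution from the Chern--Simons term $CS_3(A)$ or from the change of connection $\theta \mapsto \theta'$. This reduces to a careful reading of the identities in the proof of Proposition \ref{buscherlift}, where the shift $\theta'^i = \theta^i + \delta^i$ absorbs all the horizontal-vertical mixing into $\overline{H}'$ and leaves the purely vertical component of $H'$ untouched. Everything else in the argument is formal chaining of the three already-established propositions.
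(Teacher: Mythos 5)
Your proposal is correct and follows essentially the same route as the paper's proof: lift via Proposition \ref{einstlift}, invoke Proposition \ref{buscherlift} to pass to the ordinary Buscher rules on $P,\hat{P}\to P_0$, and conclude with Proposition \ref{eetd}. Your explicit verification of the dilaton shift and of the cancellation of the $\tfrac{1}{12}c_{ijk}c^{ijk}$ term is a welcome elaboration of details the paper leaves implicit.
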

\begin{proof}
By Proposition \ref{einstlift} we have that $(g,A,H,\varphi)$ satisfies the heterotic equations if and only if the lift $(g+ \langle A,A \rangle , H-CS_3(A) , \varphi)$ satisfies the type II equations. Similarly we lift $(\hat{g},\hat{A},\hat{H},\hat{\varphi})$ to $(\hat{g} + \langle \hat{A} , \hat{A} \rangle , \hat{H} - CS_3(\hat{A}) , \hat{\varphi})$. From Proposition \ref{buscherlift} we have that $(g+ \langle A,A \rangle , H-CS_3(A) , \varphi)$ and $(\hat{g} + \langle \hat{A} , \hat{A} \rangle , \hat{H} - CS_3(\hat{A}) , \hat{\varphi})$ are related by the global Buscher rules, hence by Proposition \ref{eetd} one satisfies the type II equations if and only if the other does. The last statement of the proposition holds by the same argument.
\end{proof}


\bibliographystyle{amsplain}

\begin{thebibliography}{99}
\bibitem{bar0}D. Baraglia, Topological T-duality for general circle bundles, {\em Pure Appl. Math. Q.} to appear, arXiv:1105.0290.
\bibitem{bar}D. Baraglia, Topological T-duality for torus bundles with monodromy, arXiv:1201.1731, (2012).
\bibitem{lb}D. Li-Bland, AV-Courant algebroids and generalized CR structures, {\em Canad. J. Math.} {\bf 63}(4) (2011), 938-960.
\bibitem{bem}P. Bouwknegt, J. Evslin, V. Mathai, T-duality: topology change from $H$-flux. {\em Comm. Math. Phys.} {\bf 249} (2004), no. 2, 383-415.
\bibitem{bhm1}P. Bouwknegt, K. Hannabuss, V. Mathai, T-duality for principal torus bundles and dimensionally reduced Gysin sequences. {\em Adv. Theor. Math. Phys.} {\bf 9} (2005), no. 5, 749-773.
\bibitem{bunksch}U. Bunke, T. Schick, On the topology of T-duality. {\em Rev. Math. Phys.} {\bf 17} (2005), no. 1, 77-112.
\bibitem{brs}U. Bunke, P. Rumpf, T. Schick, The topology of T-duality for $T^n$-bundles. {\em Rev. Math. Phys.} {\bf 18} (2006), no. 10, 1103-1154.
\bibitem{bre}P. Bressler, The first Pontryagin class. {\em Compos. Math.} {\bf 143} (2007), no. 5, 1127-1163. 
\bibitem{bcrr}U. Bruzzo, L. Cirio, P. Rossi, V. Rubtsov, Equivariant cohomology and localization for Lie algebroids, arXiv:math/0506392v3, (2008).
\bibitem{bcg}H. Bursztyn, G. Cavalcanti, M. Gualtieri, Reduction of Courant algebroids and generalized complex structures. {\em Adv. Math.} {\bf 211} (2007), no. 2, 726-765.
\bibitem{cfmp}C. G. Callan, D. Friedan, E. J. Martinec, M. J. Perry, Strings in background fields, {\em Nuclear Phys. B} {\bf 262} (1985), no. 4, 593-609.
\bibitem{guacav}G. Cavalcanti, M. Gualtieri, Generalized complex geometry and T-duality, A celebration of the mathematical legacy of Raoul Bott,  341-365, CRM Proc. Lecture Notes, 50, Amer. Math. Soc., Providence, RI, (2010).  
\bibitem{csx}Z. Chen, M. Stienon, P. Xu, On regular Courant algebroids, arXiv:0909.319v3, (2009).
\bibitem{fgk}G. Felder, K. Gawedzki, A. Kupiainen, Spectra of Wess-Zumino-Witten models with arbitrary simple groups. {\em Comm. Math. Phys.} {\bf 117} (1988), no. 1, 127-158.
\bibitem{garc}M. Garcia-Fernandez, Torsion-free generalized connections and heterotic supergravity, arXiv:1304.4294, (2013). 
\bibitem{gpt}J. Gillard, G. Papadopoulos, D. Tsimpis, Anomaly, fluxes and $(2,0)$ heterotic-string compactifications. {\em J. High Energy Phys.} {\bf 035} (2003), no. 6, 25 pp.
\bibitem{gin}V. L. Ginzburg, Equivariant Poisson cohomology and a spectral sequence associated with a moment map. {\em Internat. J. Math.} {\bf 10} (1999), no. 8, 977-1010.
\bibitem{grsc}M. B. Green, J. H. Schwarz, Anomaly cancellations in supersymmetric D=10 gauge theory and superstring theory. {\em Phys. Lett.} B {\bf 149} (1984), no. 1-3, 117-122.
\bibitem{huto}C. M. Hull, P. K. Townsend, The two-loop $\beta$-function for $\sigma$-models with torsion, {\em Phys. Lett. B} {\bf 191} (1987), no, 1-2, 115-121.
\bibitem{kil}T. P. Killingback, World-sheet anomalies and loop geometry, {\em Nuclear Phys. B} {\bf 288} (1987), no. 3-4, 578-588.
\bibitem{lwx}Z.-J. Liu, A. Weinstein, P. Xu, Manin triples for Lie bialgebroids. {\em J. Differential Geom.} {\bf 45} (1997), no. 3, 547-574.
\bibitem{mac}K. Mackenzie, Lie groupoids and Lie algebroids in differential geometry. London Mathematical Society Lecture Note Series, 124. Cambridge University Press, Cambridge, (1987) 327 pp.
\bibitem{red}C. Redden, String structures and canonical 3-forms, {\em Pacific J. Math.} {\bf 249} (2011), no. 2, 447-484.
\bibitem{sev}P. \v{S}evera, letters to Alan Weinstein, http://sophia.dtp.fmph.uniba.sk/$\sim$severa/letters/.
\bibitem{sw}P. \v{S}evera, A. Weinstein, Poisson geometry with a 3-form background. Noncommutative geometry and string theory, {\em Progr. Theoret. Phys. Suppl.} No. {\bf 144} (2001), 145-154.
\bibitem{vai}I. Vaisman, Transitive Courant algebroids. {\em Int. J. Math. Math. Sci.} (2005), no. 11, 1737-1758. 
\end{thebibliography}

\end{document}